\documentclass[10pt]{amsart}
\usepackage{geometry} 
\geometry{a4paper} 

\usepackage{breqn}  

\usepackage{amsmath, amsthm, amssymb}

\numberwithin{equation}{section}

\newtheorem{thm}{Theorem}[section]

\newtheorem{lem}[thm]{Lemma}
\newtheorem{prop}[thm]{Proposition}

\newcommand{\Ric}{\text{Ric}}
\newcommand{\Hess}{\text{Hess}\hspace{0.5 mm}}
\newcommand{\dvol}{\text{dvol}}
\newcommand{\R}{\mathbb{R}}
\newcommand{\diam}{\textrm{diam}\hspace{0.5 mm}}
\newcommand{\csch}{\textrm{csch}}
\newcommand{\vol}{\text Vol}

\newcommand{\grad}{\nabla}

\newcommand{\ob}{\mathbf{b}_\pm}
\newcommand{\Mx}{\textrm{Mx}}
\newcommand{\Capa}{\textrm{Cap}}
\newcommand{\Cov}{\textrm{Cov}}

\def\Xint#1{\mathchoice
{\XXint\displaystyle\textstyle{#1}}%
{\XXint\textstyle\scriptstyle{#1}}%
{\XXint\scriptstyle\scriptscriptstyle{#1}}%
{\XXint\scriptscriptstyle\scriptscriptstyle{#1}}%
\!\int}
\def\XXint#1#2#3{{\setbox0=\hbox{$#1{#2#3}{\int}$ }
\vcenter{\hbox{$#2#3$ }}\kern-.6\wd0}}

\def\dashint{\Xint-}


\title{Fundamental Groups of Spaces with Bakry-Emery Ricci Tensor Bounded Below}
\author{Maree Jaramillo}
\date{} 

\begin{document}

\begin{abstract}  We first extend the Cheeger-Colding Almost Splitting Theorem to smooth metric measure spaces.  Arguments utilizing this extension of the Almost Splitting Theorem show that if a smooth metric measure space has almost nonnegative Bakry-Emery Ricci curvature and a lower bound on volume, then its fundamental group is almost abelian.  Second, if the smooth metric measure space has Bakry-Emery Ricci curvature bounded from below then the number of generators of the fundamental group is uniformly bounded.  These results are extensions of theorems which hold for Riemannian manifolds with Ricci curvature bounded from below.  
\end{abstract}

\maketitle

\section{Introduction}

A smooth metric measure space is a triple $(M^n, g, e^{-f} \dvol_g)$, where $M^n$ is a complete $n$-dimensional Riemannian manifold equipped with metric $g$ and volume density $\dvol_g$.  The potential function $f: M^n \to \R$ is smooth. Smooth metric measure spaces occur naturally as collapsed measured Gromov-Hausdorff limits of sequences of warped products $(M^n \times F^m, g_\epsilon, \widetilde{\dvol_{g_\epsilon}})$ as $\epsilon \to 0$, where $\widetilde{\dvol_{g_\epsilon}}$ is the renormalized Riemannian measure and $g_\epsilon = g_M + (\epsilon e^{\frac{-f}{m}})^2 g_F$ is the warped product metric with $g_M$ and $g_F$ the metrics on $M$ and $F$, respectively.   The Ricci curvature of the warped product metric $g_\epsilon$ in the $M$ direction is given by
$$\Ric^m_f = \Ric + \Hess f - \frac{1}{m}df \otimes df.$$
This leads to the definition of the  $m$-Bakry Emery Ricci tensor on the limit space as
$$ \Ric^m_f = \Ric + \Hess f - \frac{1}{m} df \otimes df, \hspace{6 mm} 0 < m \leq \infty.$$
When $m= \infty$, we have the Bakry-Emery Ricci tensor on $(M^n, g, e^{-f}\dvol_g)$ given by
$$\Ric_f = \Ric + \Hess f.$$
Thus, the Bakry-Emery Ricci tensor is a natural analogue to Ricci curvature on $(M^n, g, e^{-f}\dvol_g)$.  
This tensor has appeared in the work on diffusion processes by Dominique Bakry and Michel Emery.  
Moreover, it occurs in the study of Ricci flow, which was utilized most notably by Grigori Perelman in his proof of the Poncar\'e conjecture.

Since topological and geometric information can be obtained for manifolds with Ricci curvature bounded from below and $\Ric_f = \Ric$ when $f$ is constant, it is natural to ask if the same information holds true for smooth metric measure spaces with Bakry-Emery Ricci tensor bounded from below.  
  Indeed, this very question has been at the center of an active field of research studied by many. In particular, Guofang Wei and Will Wylie have shown that when $\Ric_f$ is bounded from below and, in addition, either $f$ is bounded or $\partial_r f \geq -a$ for $a \geq 0$ along minimal geodesics from a fixed $p \in M$, then the mean curvature and volume comparison theorems can be extended to the smooth metric measure space setting \cite{WW}.

One important result which has already been extended to the smooth metric measure space setting is the  Cheeger-Gromoll Splitting Theorem (see \cite{FLZ}, \cite{Li1}, 
\cite{WW}).  In this paper, we discuss the extension of the quantitative version of this theorem, the Cheeger-Colding Almost Splitting Theorem, to the smooth metric measure space setting.  

A crucial step in the proof of the Cheeger-Gromoll Splitting Theorem is to construct a function $b$ such that $|\grad b| = 1$ and $\Hess b \equiv 0$.  In the proof of the Cheeger-Colding Almost Splitting Theorem, one constructs a harmonic function $\mathbf{b}$ whose Hessian is small in the $L^2$-sense.  In order to extend the Almost Splitting Theorem to smooth metric measure spaces following Cheeger-Colding's proof, we also construct $f$-harmonic functions $\mathbf{b}_\pm$ and obtain for these functions an $L^2$-Hessian estimate with respect to the conformally changed volume density $e^{-f}\dvol_g$.  
\begin{thm}\label{HessEst}
Given $R > 0$, $L > 2R + 1$ and $\epsilon >0$, let $p, q_+, q_- \in M^n$.  If $(M^n, g, e^{-f}\dvol_g)$ satisfies
\begin{equation} \label{hyp1}
|f| \leq k,
\end{equation}
\begin{equation} Ric_f \geq -(n-1)H \hspace{5 mm} (H \geq 0),
\end{equation}
\begin{equation} 
 \min\{d(p, q_+), d(p, q_-)\} \geq L, 
\end{equation}
\begin{equation}  \label{hyp2}
e(p)  \leq \epsilon, 
\end{equation}
then
\begin{equation}\label{Hess}
\dashint_{B(p, \frac{R}{2})} | \Hess \ob|^2 e^{-f} \dvol_g \leq \Psi(H, L^{-1}, \epsilon | k, n, R).
\end{equation}
\end{thm}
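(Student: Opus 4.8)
The plan is to transcribe Cheeger--Colding's proof of the $L^2$-Hessian estimate into the weighted setting: the drift Laplacian $\fLap u = \Delta u - \langle \grad f, \grad u\rangle$ (self-adjoint with respect to $e^{-f}\dvol_g$) replaces $\Delta$, the Bakry--Emery Bochner formula replaces the usual one, and the weighted mean-curvature and volume comparison theorems of Wei--Wylie \cite{WW} (valid under $\Ric_f \ge -(n-1)H$ together with the two-sided bound $|f|\le k$) replace their Riemannian counterparts. Here $\Psi$ is, as usual, a nonnegative quantity with $\Psi \to 0$ when its first arguments tend to $0$ and the remaining ones are fixed. I first set up the functions: writing $e(x) = d(x,q_+) + d(x,q_-) - d(q_+,q_-)$ for the excess and $b_\pm(x) = d(q_\pm, x) - d(q_\pm, p)$, one has $b_\pm(p) = 0$ and, as an identity, $b_+ + b_- = e - e(p) \ge -\epsilon$. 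Because $L > 2R+1$, the ball $B(p,2R)$ misses $q_\pm$, so on it the $b_\pm$ are distance functions, and the weighted Laplacian comparison gives $\fLap b_\pm \le \Psi$ in the barrier sense on $B(p,\tfrac32 R)$ once one also invokes an Abresch--Gromoll-type excess bound $e \le \Psi(\epsilon, L^{-1}\mid n,R,k,H)$ there (whose proof needs only the weighted Laplacian comparison and the maximum principle). Let $\obp$ and $\mathbf{b}_-$ solve $\fLap \mathbf{b}_\pm = 0$ on $B(p,2R)$ with $\mathbf{b}_\pm = b_\pm$ on the boundary.

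Next I would establish two families of auxiliary estimates, each a routine adaptation of the unweighted argument once one integrates against $e^{-f}\dvol_g$ and uses self-adjointness of $\fLap$: first, the sup estimate $|\mathbf{b}_\pm - b_\pm| \le \Psi$ on $B(p,\tfrac32 R)$, from the maximum principle applied to $\mathbf{b}_\pm - b_\pm$ and the two facts above; second, the $L^2$-gradient estimates
\[
\dashint_{B(p,R)} \bigl|\, |\grad \ob|^2 - 1 \,\bigr|\, e^{-f}\dvol_g \le \Psi \qquad \text{and} \qquad \dashint_{B(p,R)} |\grad(\obp + \mathbf{b}_-)|^2\, e^{-f}\dvol_g \le \Psi .
\]
The second follows by pairing $|\grad(\obp + \mathbf{b}_-)|^2$ with a cutoff, integrating by parts (the $\fLap$-term vanishes since $\obp + \mathbf{b}_-$ is $f$-harmonic), applying Cauchy--Schwarz, and using $|\obp + \mathbf{b}_-| \le \Psi$. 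For the first, the upper side $|\grad \ob| \le 1+\Psi$ is the local gradient estimate for $f$-harmonic functions under $\Ric_f \ge -(n-1)H$ and $|f|\le k$ (a consequence of the Bakry--Emery Bochner inequality), and the lower side $\dashint_{B(p,R)} |\grad \ob|^2 e^{-f}\dvol_g \ge 1-\Psi$ is obtained by integrating $|\grad \ob|$ along a family of minimal geodesics meeting $B(p,R)$, controlled by a segment inequality for the measure $e^{-f}\dvol_g$, together with the fact that $\ob \approx b_\pm$ increases at unit rate.

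The Hessian estimate then follows from the Bakry--Emery Bochner formula, which for $f$-harmonic $u$ reads
\[
\tfrac12 \fLap |\grad u|^2 = |\Hess u|^2 + \Ric_f(\grad u, \grad u) \ge |\Hess u|^2 - (n-1)H\,|\grad u|^2 .
\]
Choose a cutoff $\phi$ with $0 \le \phi \le 1$, $\phi \equiv 1$ on $B(p,\tfrac R2)$, $\operatorname{supp}\phi \subset B(p,R)$, and $|\grad\phi|^2 + |\fLap\phi| \le C(n,R,k,H)$; such a $\phi$ comes from the Cheeger--Colding cutoff construction carried out with the weighted Laplacian comparison, which is precisely what controls the drift term $\langle \grad f, \grad\phi\rangle$. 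Multiplying the Bochner inequality by $\phi e^{-f}$, integrating over $M$, and using both self-adjointness of $\fLap$ and $\int_M \fLap\phi\, e^{-f}\dvol_g = 0$ yields
\begin{align*}
\int_{B(p,\frac R2)} |\Hess u|^2 e^{-f}\dvol_g &\le \tfrac12 \int_{B(p,R)} (\fLap\phi)\,(|\grad u|^2 - 1)\, e^{-f}\dvol_g \\
&\quad + (n-1)H \int_{B(p,R)} \phi\,|\grad u|^2 e^{-f}\dvol_g .
\end{align*}
Taking $u = \mathbf{b}_\pm$, the first term on the right is $\le \Psi \cdot \int_{B(p,R)} e^{-f}\dvol_g$ by the $L^2$-gradient estimate, and the second is $\le (n-1)H(1+\Psi)^2 \int_{B(p,R)} e^{-f}\dvol_g$. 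Dividing by $\int_{B(p,R/2)} e^{-f}\dvol_g$ and bounding the doubling ratio $\int_{B(p,R)} e^{-f}\dvol_g / \int_{B(p,R/2)} e^{-f}\dvol_g$ by $C(n,R,k,H)$ via Wei--Wylie's weighted volume comparison \cite{WW}, one arrives at \eqref{Hess}.

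I expect the main obstacle to be the lower half of the $L^2$-gradient estimate, $\dashint_{B(p,R)} |\grad \ob|^2 e^{-f}\dvol_g \ge 1-\Psi$: there is no pointwise lower bound on $|\grad \ob|$ available, so the closeness $\ob \approx b_\pm$ must be converted into an honest integral bound through a segment (maximal-function) inequality adapted to the weighted measure, and this is the one place where the hypotheses $e(p) \le \epsilon$ and $L > 2R+1$ enter in an essential way and where the weighted comparison geometry must be deployed most delicately. A secondary technical point is producing the cutoff $\phi$ with $\fLap\phi$ bounded, where the drift term again forces use of the weighted Laplacian comparison; apart from these, the argument is a direct transcription of the unweighted one, provided $\fLap$ and $e^{-f}\dvol_g$ are used consistently throughout.
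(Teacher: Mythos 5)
Your proposal follows essentially the same route as the paper: the Bakry--Emery Bochner formula applied to the $f$-harmonic $\mathbf{b}_\pm$, multiplication by a cutoff $\phi$ built from the $f$-Laplacian comparison with $|\grad\phi|$ and $|\fLap\phi|$ bounded, integration by parts replacing $\phi\fLap|\grad\ob|^2$ with $\fLap\phi\,(|\grad\ob|^2-1)$, the Ricci lower bound to absorb $\Ric_f(\grad\ob,\grad\ob)$ into an $(n-1)H$-term, and the $L^1$-smallness of $|\grad\ob|^2-1$ (the paper obtains this from Lemma~\ref{GradL2Close} via $||\grad\ob|^2-1|\le|\grad\ob-\grad b_\pm|(|\grad\ob|+1)$). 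Your explicit handling of the volume ratio via Wei--Wylie's weighted doubling is a detail the paper leaves implicit, but the substance and ordering of the steps coincide with the paper's proof.
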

The functions $e$, $\Psi$, and $\mathbf{b}_\pm$ in Theorem \ref{HessEst} are defined in \eqref{excess}, \eqref{Psi}, and \eqref{b}, respectively.  With the $L^2$-Hessian estimate \eqref{Hess}, one may then obtain a type of Pythagorean Theorem from which the Almost Splitting Theorem will follow.  

We note here that while preparing this paper for publication, the author was notified that Feng Wang and Xiaohua Zhu posted \cite{WZ} in which they also develop an  $L^2$-Hessian estimate and Almost Splitting Theorem for Bakry-Emery Ricci tensor bounded from below.   In Wang and Zhu's version, the Hessian estimate assumes that the gradient of the potential function, rather than the potential function itself, is bounded. Wang and Zhu's Almost Splitting Theorem \cite[Theorem 3.1]{WZ} assumes that both the potential functions and their gradients are bounded, whereas Theorem \ref{Splitting} assumes only that the potential functions are bounded.

The hypotheses for the Hessian estimates and Almost Splitting Theorem of these two papers differ due in part to the gradient estimates used by the authors.  
In Cheeger-Colding's proof of the $L^2$-Hessian estimate in the Riemannian setting,  a cutoff function $\phi$ is used.  This cutoff function $\phi$ has the property that $|\grad \phi|$ and $|\Delta \phi|$ are bounded by constants depending only on $n, H, R$.  
  The construction of this cutoff function in the Riemannian case relies on the gradient of a function being bounded away from the boundary of a ball.  This boundedness is guaranteed by the Cheng-Yau gradient estimate.    The gradient estimate obtained by Wang and Zhu requires that the gradient of the potential function is bounded.  The gradient estimate obtained here requires that the potential function itself is bounded.

  \vspace{3 mm}

\begin{thm}\label{GradEst2}
Let $(M^n, g, e^{-f} dvol)$ be a complete smooth metric measure space with $|f| \leq k$ and $\Ric_f \geq -(n-1)H^2$ where $H \geq 0$.  If $u$ is a positive function defined on $\overline{B(q, 2R)}$ with $\Delta_f u= c$, for a constant $c \geq 0$, then for any $q_0 \in \overline{B(q, R)}$, we have
$$|\grad u| \leq \sqrt{c_1(n, k, H, R) \sup_{p \in B(q; 2R)} u(p)^2 + c_2(c, n) \sup_{p \in B(q; 2R)} u(p)}.$$
\end{thm}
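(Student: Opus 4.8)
The plan is to adapt the classical Cheng--Yau gradient estimate via the Bochner technique, but using the $f$-Laplacian $\fLap = \Delta - \grad f \cdot \grad$ and the Bakry--Emery Bochner formula
\begin{equation*}
\tfrac12 \fLap |\grad u|^2 = |\Hess u|^2 + \langle \grad u, \grad \fLap u\rangle + \Ric_f(\grad u, \grad u).
\end{equation*}
First I would set $w = \log u$ (so $u > 0$ is used here), whose $f$-Laplacian is $\fLap w = c/u - |\grad w|^2$, and let $G = |\grad w|^2$. Applying the Bochner formula to $w$ and using $\Ric_f \geq -(n-1)H^2$ together with the Cauchy--Schwarz-type inequality $|\Hess w|^2 \geq \frac{1}{n}(\fLap w)^2 - (\text{lower order from the }f\text{-term})$ — more carefully, one uses the refined Bochner inequality $|\Hess w|^2 \geq \frac{(\Delta w)^2}{n}$ and absorbs the $\grad f$ contribution — I would derive a differential inequality of the form $\fLap G \geq \alpha G^2 - \beta G - \gamma$ away from the zero set of $\grad w$, where $\alpha$ depends on $n$ and $\beta, \gamma$ depend on $n, H, c/u$.

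Next I would introduce the standard cutoff: let $\varphi$ be a function of $r = d(q, \cdot)$ with $\varphi \equiv 1$ on $B(q, R)$, $\varphi \equiv 0$ outside $B(q, 2R)$, and the usual bounds $|\grad \varphi|^2 / \varphi \leq C/R^2$ and $\fLap \varphi \geq -C(n, H, R, k)/R^2$ — the latter is exactly where the hypothesis $|f| \leq k$ enters, via the Wei--Wylie mean curvature / Laplacian comparison for $\fLap r$ under $\Ric_f \geq -(n-1)H^2$ with bounded $f$ (cited earlier as \cite{WW}). Then I consider the function $\varphi G$ and examine its maximum over $\overline{B(q,2R)}$, which is attained at some interior point $x_0$ (if $G$ is not identically zero). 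At $x_0$ one has $\grad(\varphi G) = 0$ and $\fLap(\varphi G) \leq 0$; expanding these, substituting the differential inequality for $\fLap G$, and using the cutoff bounds yields a quadratic inequality in $(\varphi G)(x_0)$ of the form $\alpha (\varphi G)^2 \leq C_1(n,k,H,R)(\varphi G) + C_2(c/u, n)(\varphi G)$ after clearing, hence $(\varphi G)(x_0) \leq C_1(n,k,H,R) + C_2(n) \sup u^{-1} c$. Translating back through $G = |\grad u|^2 / u^2$ and evaluating at $q_0 \in \overline{B(q,R)}$ where $\varphi = 1$ gives
\begin{equation*}
|\grad u|^2(q_0) \leq \left(C_1(n,k,H,R) + C_2(n)\, c\, \sup_{B(q,2R)} u^{-1}\right) \sup_{B(q,2R)} u^2,
\end{equation*}
which rearranges to the claimed bound with $c_1 = C_1$ and $c_2 = C_2 c$ (noting $\sup u^{-1} \cdot \sup u^2 \leq \sup u$ needs care — actually one keeps $c \sup u^{-1} \cdot \sup u^2$ and estimates $\sup u^{-1} \cdot \sup u^2$; here I would instead carry $c/u$ through more carefully so that the final second term is $c_2(c,n)\sup u$).

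The main obstacle I expect is controlling the extra terms coming from $\grad f$ in the Bochner formula and in the cutoff estimate without a bound on $|\grad f|$ — only $|f| \leq k$ is assumed. The key technical input that makes this possible is the Wei--Wylie comparison theorem, which gives $\fLap r \leq (n-1)H\coth(Hr) + \tfrac{2k}{r}$ (or an analogous bound) using only $|f| \leq k$ rather than a gradient bound on $f$; feeding this into the Laplacian comparison for the cutoff $\varphi$ is what lets the constant $c_1$ depend on $k$ rather than on $\sup|\grad f|$. A secondary subtlety is the non-smoothness of $r$ on the cut locus, handled in the usual way by either working in the barrier/support sense or perturbing, and the standard localization argument to ensure the maximum of $\varphi G$ is interior. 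Apart from these points, the computation is a routine, if lengthy, adaptation of Cheng--Yau, so I would present the differential inequality and the maximum-principle step in detail and relegate the cutoff construction to the cited comparison results.
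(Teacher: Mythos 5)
Your proposal follows the classical Cheng--Yau route by setting $w=\log u$, but the paper deliberately avoids this substitution and instead follows Brighton's approach, setting $h=u^\epsilon$ for a carefully chosen power $\epsilon=7/8$. This is not a cosmetic difference: the $\log$ substitution has a genuine gap under the hypotheses given. When you expand $|\Hess w|^2\geq\frac{1}{n}(\Delta w)^2=\frac{1}{n}(\fLap w+\langle\grad f,\grad w\rangle)^2$ with $\fLap w=c/u-|\grad w|^2$, the cross term $-\frac{2}{n}|\grad w|^2\langle\grad f,\grad w\rangle$ appears, and it has size on the order of $|\grad f|\cdot G^{3/2}$. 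There is no way to absorb this into $\alpha G^2-\beta G-\gamma$ with $\alpha,\beta,\gamma$ independent of $\sup|\grad f|$; a Young's inequality split gives a leftover $\langle\grad f,\grad w\rangle^2\leq|\grad f|^2 G$, which is exactly a term controlled by $|\grad f|$, not by $|f|$. You acknowledge this is ``the main obstacle,'' but the remedy you point to --- the Wei--Wylie comparison for $\fLap r$ under $|f|\leq k$ --- only helps you build the cutoff $\varphi$; it does nothing for this Bochner-side cross term. So the differential inequality you claim in the second paragraph is not actually available.

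What the paper does instead is use Brighton's observation that the power $h=u^\epsilon$, $\epsilon\in(0,1)$, produces a structurally different expansion. In particular $(\Delta h)^2/n$ contributes a term $\frac{1}{n}\langle\grad f,\grad h\rangle^2$ with a \emph{positive} coefficient, and the extra term $\frac{\epsilon-1}{\epsilon h}\langle\grad h,\grad|\grad h|^2\rangle$ coming from $\grad(\fLap h)$ has the right sign at a maximum. The dangerous cross term $\frac{2(\epsilon-1)}{\epsilon h n}|\grad h|^2\langle\grad f,\grad h\rangle$ is then handled by a two-case split: if $\langle\grad h,\grad f\rangle\leq a\,|\grad h|^2/h$ the cross term is absorbed into the $|\grad h|^4/h^2$ term; if $\langle\grad h,\grad f\rangle\geq a\,|\grad h|^2/h$ it is absorbed into the $\langle\grad f,\grad h\rangle^2$ term. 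With $\epsilon=7/8$ and $a=1/2$ all relevant coefficients stay positive, and $|\grad f|$ never appears. This is precisely what lets the constant depend on $k$ but not on $\sup|\grad f|$ (and is the stated contrast with Wang--Zhu, who do assume a gradient bound). Note also that Proposition~\ref{GradEst} in the paper makes \emph{no} assumption on $f$ at all; the bound $|f|\leq k$ only enters in passing from Proposition~\ref{GradEst} to Theorem~\ref{GradEst2}, to replace the term $\alpha=\max\fLap r$ by a constant via the $f$-Laplacian comparison (Proposition~\ref{fLapComp}) --- your instinct about where $|f|\leq k$ is used is correct for that step, but the Bochner part must be repaired first. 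Your outer maximum-principle-with-cutoff framework matches the paper, and the observation about cut-locus non-smoothness and barrier sense is fine, but the core computation needs Brighton's substitution, not $\log u$.
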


Theorem \ref{GradEst2} follows directly from Theorem \ref{GradEst}, which is an extension of Kevin Brighton's gradient estimate for $f$-harmonic functions in \cite{Bri}, to functions $u$ such that $\Delta_f u = c$ for any nonnegative constant $c$.  We are not sure if the estimate holds true when $c < 0$.


The Almost Splitting theorem will allow us to generalize the arguments in the proofs of two results on the fundamental group of Riemannian manifolds to the smooth metric measure space setting.  The first result is an extension of a theorem of Gabjin Yun \cite{Yun}.  Yun's result asserts that the fundamental group of a Riemannian manifold with almost nonnegative Ricci curvature, diameter bounded from above, and volume bounded from below is almost abelian.   This result is a strengthening of a theorem of Wei \cite{Wei} which shows that under the same conditions $\pi_1(M)$ has polynomial growth.  In order to extend Yun's theorem, we first develop an absolute volume comparison, Proposition \ref{AbsVolComp}, which allows us to extend Wei's theorem to smooth metric measure spaces.  We then follow Yun's argument, utilizing the Almost Splitting Theorem,  to obtain the following result.  

   \begin{thm}\label{AlmostAbelian}
   For any constants $D, k, v > 0$, there exists $\epsilon = \epsilon(D, k, n, v) > 0$ such that if a smooth metric measure space $(M^n, g, e^{-f}dvol_g)$ with $|f| \leq k$ admits a metric under which it satisfies the conditions 
   \begin{equation}\label{Ric}
   \Ric_f \geq -\epsilon,
   \end{equation} 
   \begin{equation}\label{diam}
   \diam(M) \leq D,
   \end{equation} 
   \begin{equation}\label{vol}
   \vol_f(M) \geq v,
   \end{equation}
    then $\pi_1(M)$ is almost abelian, i.e. $\pi_1(M)$ contains an abelian subgroup of finite index.  
   \end{thm}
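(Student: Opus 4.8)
The plan is to follow Yun's original argument for Riemannian manifolds, substituting our generalizations of the key ingredients at each step. First I would set up the standard contradiction framework: suppose no such $\epsilon$ exists, so there is a sequence of smooth metric measure spaces $(M_i^n, g_i, e^{-f_i}\dvol_{g_i})$ with $|f_i| \le k$, $\Ric_{f_i} \ge -1/i$, $\diam(M_i) \le D$, $\vol_{f_i}(M_i) \ge v$, but with $\pi_1(M_i)$ \emph{not} almost abelian. The first task is to control the fundamental group's size: using the absolute volume comparison (Proposition \ref{AbsVolComp}) to extend Wei's theorem to this setting, one obtains that $\pi_1(M_i)$ has polynomial growth of degree bounded by a universal constant $C(D,k,n,v)$; in particular, by Gromov's polynomial growth theorem $\pi_1(M_i)$ is virtually nilpotent, and (as in Wei--Yun) one may pass to a subgroup of uniformly bounded index that is nilpotent, and even, after further reduction, assume it is generated by a bounded number of elements.

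Next I would pass to the universal covers $\widetilde{M}_i$ with the lifted potential functions $\widetilde{f}_i$ (still bounded by $k$), and extract a pointed measured Gromov--Hausdorff limit $(\widetilde{M}_i, \widetilde p_i) \to (Y, y_\infty)$ along a subsequence; precompactness follows from the volume comparison for $\Ric_f$ bounded below with $|f|$ bounded (Wei--Wylie). The deck transformation groups $\Gamma_i = \pi_1(M_i)$ act on $\widetilde{M}_i$, and one passes to an equivariant limit, obtaining a limit group $G$ acting on $Y$ with $Y/G$ the limit of the $M_i$. Since $\Ric_{f_i} \ge -1/i \to 0$ and the diameters are bounded while the volumes are bounded below (so no collapsing of the base), the limit space $Y$ has a line — indeed one shows that if the fundamental groups were ``uniformly non-abelian'' then $\widetilde{M}_i$ must have large diameter in at least one direction, forcing a line in $Y$. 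Here is where the Almost Splitting Theorem (Theorem \ref{Splitting}, via the Hessian estimate Theorem \ref{HessEst} and the gradient estimate Theorem \ref{GradEst2}) enters: it guarantees that $Y$ splits isometrically as $\R \times Y'$, and iterating, $Y \cong \R^j \times Z$ where $Z$ contains no line. One then analyzes the action of $G$ on this splitting: the projection of $G$ to the isometry group of $\R^j$ gives, after passing to finite-index subgroups, a crystallographic-type action, which by Bieberbach's theorem is virtually abelian, while the ``bounded'' part of $G$ acting on $Z$ must be finite because $Z$ is compact (as $\diam(M_i) \le D$ keeps $Z$ bounded) — combining these shows $G$, and hence $\Gamma_i$ for large $i$, is almost abelian, contradicting the assumption.

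The main obstacle, and the step requiring the most care, is the equivariant Gromov--Hausdorff convergence of the pairs $(\widetilde{M}_i, \Gamma_i)$ together with the verification that the Almost Splitting Theorem applies on the covers. One must check that the hypotheses of Theorem \ref{HessEst} — in particular the excess bound $e(p) \le \epsilon$ at appropriate points and the requisite large distances $L$ — are actually satisfied along the sequence; this is precisely where the assumption that $\pi_1(M_i)$ is \emph{not} almost abelian is converted into geometric data (existence of deck transformations moving basepoints a definite distance, hence rays and near-lines in $\widetilde{M}_i$). A secondary subtlety is ensuring the potential functions behave well under lifting and limiting: since $|f_i| \le k$ uniformly, the lifted $\widetilde f_i$ remain uniformly bounded, and one must argue that the limit measure on $Y$ is (up to the bounded density $e^{-f_\infty}$) comparable to the limit of $e^{-\widetilde f_i}\dvol$, so that the measured splitting descends correctly. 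Once these convergence issues are handled, the group-theoretic endgame (Bieberbach plus finiteness of the compact factor's isometry contribution, exactly as in Yun's proof) is essentially formal.
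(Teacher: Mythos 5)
Your proposal follows essentially the same route as the paper: a contradiction sequence, the absolute volume comparison (Proposition \ref{AbsVolComp}) feeding an extension of Wei's polynomial-growth theorem, passage to a torsion-free nilpotent finite-index subgroup via Gromov/Yun, equivariant pointed Gromov--Hausdorff convergence of the universal covers $(\widetilde M_i, \Gamma_i, p_i) \to (Y, G, q)$, the Almost Splitting Theorem to decompose $Y \cong \R^j \times Z$, and a Fukaya--Yamaguchi/Bieberbach-type analysis of $G$ to reach the contradiction. In that sense you have the right skeleton and the right tools.

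Two places, however, are sketched loosely enough to count as gaps rather than just details. First, you assert that $Y$ must contain a line because the groups are ``uniformly non-abelian,'' and use that to justify invoking the splitting theorem. This is not needed and is not how the argument runs: one simply applies Theorem \ref{Splitting} iteratively to write $Y = \R^j \times Z$ where $Z$ contains no line, with $j = 0$ allowed; the splitting theorem is a tool for decomposing $Y$, not something that has to be ``forced'' by non-abelianness. Second, and more seriously, you conclude that the kernel of $G \to \mathrm{Isom}(\R^j)$ is finite ``because $Z$ is compact.'' Compactness of $Z$ gives only that $\mathrm{Isom}(Z)$ is a compact Lie group; a closed subgroup of it need not be finite (it could be a torus). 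What actually forces the relevant subgroup to be trivial is the uniform lower bound on the displacement of nontrivial elements of $\Gamma_i$, which the paper gets from the torsion-freeness of $\Gamma_i$ together with Proposition \ref{Length SMMS}: every nontrivial $\gamma \in \Gamma_i$ has $l(\gamma) \ge D/N$. This passes to the limit to give $G(\delta_0) = \{1\}$ for a definite $\delta_0 > 0$, which is precisely what lets one apply Fukaya--Yamaguchi's Theorem~6.1 and Lemma~6.1 to conclude $G$ contains a finite-index free abelian subgroup of rank $\le j$. Without this lower displacement bound, the endgame does not close. You flagged the equivariant convergence and the applicability of the Hessian estimate on the covers as the delicate points, but in fact the hidden work lies in this displacement bound and the associated $G_{\delta_0}$ machinery.
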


   The second result is an extension of Vitali Kapovitch and Burkhard Wilking's theorem which gives a uniform bound on the number of generators of $\pi_1(M)$ for the class of $n$-dimensional manifolds $M^n$ with $\Ric \geq -(n-1)$ and $\diam (M, g) \leq D$, for given $n$ and $D$ \cite[Theorem 3]{KW}.   A uniform bound had been given previously in the case when the conjugate radius is bounded from below \cite{WeiBettiNum}.  An extension of this theorem to the smooth metric measure space setting is as follows.


   \begin{thm}\label{UnifBound}
   Given $n$, $D$, and $k$, there is a constant $C = C(n,D,k) >0$ such that the following holds.  Let $(M^n, g, e^{-f}\dvol_g)$ be a smooth metric measure space, $|f| \leq k$, $\diam M \leq D$ and $\Ric_f \geq -(n-1)$.  Then $\pi_1(M)$ is generated by at most $C$ elements.
   \end{thm}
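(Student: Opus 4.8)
The plan is to carry Kapovitch--Wilking's argument \cite{KW} for the generator bound over to the weighted setting, using the weighted Almost Splitting Theorem (Theorem \ref{Splitting}) in place of the Cheeger--Colding splitting theorem and the Wei--Wylie relative volume comparison \cite{WW}, which is available here because $|f|\le k$, in place of the Bishop--Gromov inequality. First I would pass to the universal cover $\pi\colon\widetilde M\to M$, with deck group $\Gamma=\pi_{1}(M)$ acting on $\widetilde M$ equipped with the pulled-back metric and the pulled-back potential $\widetilde f=f\circ\pi$; since the curvature condition and $|f|\le k$ are local they persist on $\widetilde M$, and every $\gamma\in\Gamma$ preserves the weighted measure because $\widetilde f\circ\gamma=\widetilde f$. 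Fix $\widetilde p$ over $p$; as $M$ is compact (complete with $\diam M\le D$), $\Gamma$ is generated by $\{\gamma:\ d(\widetilde p,\gamma\widetilde p)\le 2D\}$, and I would introduce Gromov's short generators $\gamma_{1},\gamma_{2},\dots$, where $\gamma_{i+1}$ is an element of $\Gamma\setminus\langle\gamma_{1},\dots,\gamma_{i}\rangle$ that minimizes $d(\widetilde p,\gamma\widetilde p)$. Writing $\ell_{i}=d(\widetilde p,\gamma_{i}\widetilde p)$, one has $\ell_{1}\le\ell_{2}\le\cdots\le 2D$, and since $\gamma_{i}^{-1}\gamma_{j}\notin\langle\gamma_{1},\dots,\gamma_{j-1}\rangle$ for $i<j$ one obtains the separation $d(\gamma_{i}\widetilde p,\gamma_{j}\widetilde p)\ge\ell_{j}$. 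It then suffices to bound the number of short generators by a constant $C(n,D,k)$, and I would split this into two pieces.

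The first piece controls, at each scale, the number of short generators whose lengths lie in a fixed ratio-$2$ window $[r,2r]$. Their orbit points $\gamma_{i}\widetilde p$ lie in $B(\widetilde p,4D)$ and are pairwise $r$-separated, and since $\gamma_{i}$ preserves the weighted measure one has $\vol_f B(\gamma_{i}\widetilde p,r/2)=\vol_f B(\widetilde p,r/2)$; rescaling the metric so that $r$ becomes $1$ turns the lower bound into $-(n-1)r^{2}$, which is $\ge -(n-1)(2D)^{2}$ as $r\le 2D$, and leaves $\widetilde f$ unchanged, so the Wei--Wylie relative volume comparison yields a packing bound $C_{0}(n,D,k)$ on the number of such generators. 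The second, and genuinely hard, piece is to bound the number of distinct scales occupied by the short generators, and here I would argue by contradiction and induction on $n$, the base case $n=1$ being $M=S^{1}$. If the assertion fails there is a sequence $(M_{j}^{n},g_{j},e^{-f_{j}}\dvol)$ with $|f_{j}|\le k$, $\diam M_{j}\le D$, $\Ric_{f_{j}}\ge -(n-1)$ and short generators $\gamma_{j,1},\dots,\gamma_{j,N_{j}}$ with $N_{j}\to\infty$; by the first piece the occupied scales must spread out, so $\ell_{j,1}\to 0$. Rescaling $\widetilde g_{j}$ by $\ell_{j,1}^{-2}$ turns the curvature lower bound into $-(n-1)\ell_{j,1}^{2}\to 0$ while keeping $|\widetilde f_{j}|\le k$, with the shortest generator now of unit length and the longest of length tending to infinity. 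The plan is then to use the large diameter of the orbit together with the short-generator separation to produce points $q_{j,\pm}$ with $d(\widetilde p_{j},q_{j,\pm})\to\infty$ and with excess at $\widetilde p_{j}$ tending to zero, so that Theorem \ref{Splitting} -- whose error is governed by the vanishing curvature defect, the vanishing excess, and $|\widetilde f_{j}|\le k$ -- forces $B(\widetilde p_{j},L)$ to be Gromov--Hausdorff close to a ball in a product $\R\times Z_{j}$. Passing to a pointed equivariant measured Gromov--Hausdorff limit should yield $Y=\R\times Z$ with the limit group preserving the splitting; applying the inductive hypothesis to $Z$, which has smaller dimension, and noting that only boundedly many generators can act nontrivially on the $\R$-factor, bounds $N_{j}$ and contradicts $N_{j}\to\infty$.

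The main obstacle is this last step, exactly as in \cite{KW}: one must genuinely extract the line splitting -- that is, produce the pair of points of vanishing excess required by Theorem \ref{Splitting} -- while at the same time arranging that the rescaled Bakry--Emery Ricci lower bound tends to zero so that the Almost Splitting Theorem applies, and one must set up the induction so that the limit factor $Z$ really does fall within the inductive hypothesis, with the effective dimension dropping under the splitting. I do not expect the potential function to create any new difficulty: it is unaffected by rescaling, the bound $|f|\le k$ is preserved under all the limits involved, and wherever \cite{KW} invokes the Bishop--Gromov inequality one has the Wei--Wylie comparison available, precisely because $f$ is bounded.
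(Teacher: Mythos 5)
Your plan correctly reproduces the easy half of the argument (Gromov short generators, the relative volume comparison packing bound for generators of length in a fixed window, all available here because $|f|\le k$ and the potential is unchanged under rescaling). The genuine gap is in the inductive structure you propose for the hard half. You want to induct on $n$, the dimension of the manifold, with base case $n=1$, and apply the inductive hypothesis to the factor $Z$ in the split limit $Y=\R\times Z$. But $Z$ is a pointed Gromov--Hausdorff limit of rescaled smooth metric measure spaces; it is a length space that need not be a manifold at all, and in particular is not a smooth metric measure space of dimension $n-1$. The inductive hypothesis therefore does not apply to it, and the induction on $n$ does not close. This is precisely why Kapovitch--Wilking, and the paper, run a \emph{reverse} induction on $\dim(X)$, the maximal $k$ such that some tangent cone of the limit $X$ at a point of $B(p_\infty,\frac14)$ is $\R^k$; that quantity is bounded above a priori (by $n$ in \cite{KW}, and by $n+4k$ here via Lemma \ref{Limit Dimension}), so the base case is vacuous, and the induction step shows that a contradicting sequence whose limit has dimension $m$ produces one whose limit has dimension at least $m+1$. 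Nothing in your set-up records a dimension that both increases under the splitting step and is a priori bounded, so the argument as written cannot terminate.

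A secondary, smaller discrepancy: you propose to extract the splitting by locating far-apart points $q_{j,\pm}$ of vanishing excess and invoking Theorem \ref{Splitting} directly. The paper instead proves the splitting through the Product Lemma (Theorem \ref{Product lemma SMMS}), which needs almost-Euclidean coordinates; these are produced by Theorem \ref{9.29 SMMS} after first rescaling to a Euclidean tangent cone (Step 1), and the crucial point-selection --- finding a base point $z_i$ at which the Hessian/gradient integrals stay small at \emph{all} scales $r\le\frac14$ --- is done with the weighted Hardy--Littlewood maximal (weak $1$--$1$) inequality, Proposition \ref{Weak 1-1 ineq}. Your excess-based proposal skips the tangent-cone rescaling and the maximal-function point selection, and it is not clear how it would supply the ``good at all scales'' condition that the Product Lemma and Lemmas \ref{2.2 SMMS}--\ref{2.3 SMMS} require to push the dimension up in the next rescaling. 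If you replace induction on $n$ by reverse induction on $\dim(X)$ with the bound from Lemma \ref{Limit Dimension}, and replace the excess mechanism with the Theorem \ref{9.29 SMMS} / weak $1$--$1$ / Product Lemma chain, your plan becomes the paper's proof.
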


\vspace{2 mm}
\noindent {\bf Remark} When $m \neq \infty$ and $\Ric^m_f$ is bounded from below, comparison theorems hold with no additional assumptions on $f$ \cite{Qi}, see also \cite{WW2}.  Due to this fact, there are versions of our theorems for $\Ric^m_f$ bounded from below with no additional assumptions on $f$.

\vspace{2 mm}
The remaining sections of this paper are structured as follows.  In Section 2, we discuss the extension of the Almost Splitting Theorem, along with the essential tools which allow one to extend to the smooth metric  measure space setting.    These essential tools include the $f$-Laplacian Comparison, Gradient Estimate, Segment Inequality, and the Quantitative Maximum Principle for smooth metric measure spaces.   We will also provide a proof of the key Hessian estimate, Theorem \ref{HessEst}.  In Section 3, we develop an absolute volume comparison which is used to extend Wei's theorem to the smooth metric measure space setting.   In Sections 4 and 5 we discuss the proofs of Theorems \ref{AlmostAbelian} and \ref{UnifBound}, respectively.

\section*{Acknowledgement}
The author would like to thank her adviser Guofang Wei for her guidance and many helpful discussions and suggestions which led to the completion of this paper.

   \section{The Almost Splitting Theorem for Smooth Metric Measure Spaces}

As indicated in the introduction, an essential tool in establishing the Almost Splitting Theorem for smooth metric measure spaces will be the Hessian estimate, Theorem \ref{HessEst}.  In order to obtain the original estimate, other essential tools, including the Laplace Comparison, Cheng-Yau Gradient Estimate, and Abresch-Gromoll Inequality, were utilized. Cheeger and Colding also developed key tools, such as the Segment Inequality.  With the extension of such tools to the smooth metric measure space setting, we can generalize the arguments of Cheeger and Colding \cite{CC}, see also \cite{Ch}.

In order to extend the essential tools mentioned above, one must integrate with respect to the measure $e^{-f}\dvol_g$.  In addition, one must replace the Laplace-Beltrami operator on Riemannian manifolds with its natural analog on smooth metric measure spaces.  This analog is the $f$-Laplacian, defined for functions $u \in C^2(M)$ by 
$$\Delta_f(u) = \Delta(u) - \langle \nabla u, \nabla f \rangle.$$  This operator is natural in the sense that it is self-adjoint with respect to the measure $e^{-f} \dvol_g$.

 From the Mean Curvature Comparison \cite[Theorem 1.1]{WW} and definition of the $f$-Laplacian, one immediately obtains the following $f$-Laplacian comparison.

\begin{prop}\label{fLapComp}{\bf ($f$-Laplacian Comparison)}
Suppose $Ric_f \leq (n-1)H$ with $|f| \leq k$.  Let $\Delta_H^{n+4k}$ denote the Laplacian of the simply connected model space of dimension $n+ 4k$ with constant sectional curvature $H$.  Then for radial functions $u$, 
\begin{enumerate}
\item  $\Delta_f(u) \leq \Delta_H^{n+4k} u$ if $u' \geq 0$.
\item $\Delta_f(u) \geq \Delta_H^{n+4k} u$ if $u' \leq 0$.
\end{enumerate}
\end{prop}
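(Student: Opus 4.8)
The plan is to reduce everything to the Mean Curvature Comparison of Wei–Wylie, \cite[Theorem 1.1]{WW}, via the standard identity relating the Laplacian of a radial function to the mean curvature of geodesic spheres. First I would fix a point $p$ and write $r(x) = d(p,x)$. For a radial function $u = u \circ r$ (smooth, away from $p$ and the cut locus) one has the pointwise identity
\[
\Delta u = u''(r) + u'(r)\,\Delta r, \qquad\text{hence}\qquad \Delta_f u = u''(r) + u'(r)\bigl(\Delta r - \partial_r f\bigr) = u''(r) + u'(r)\,\Delta_f r ,
\]
since $\langle \nabla u, \nabla f\rangle = u'(r)\,\partial_r f$. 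Here $\Delta r$ is the mean curvature $m(r)$ of the geodesic sphere of radius $r$, so $\Delta_f r = m_f(r)$ is exactly the $f$-mean curvature appearing in \cite{WW}. The analogous identity on the model space $M_H^{n+4k}$ reads $\Delta_H^{n+4k} u = u''(r) + u'(r)\, m_H^{n+4k}(r)$, where $m_H^{n+4k}(r) = (n+4k-1)\,\frac{\mathrm{sn}_H'(r)}{\mathrm{sn}_H(r)}$ is the model mean curvature in dimension $n+4k$.

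Next I would invoke the Wei–Wylie Mean Curvature Comparison: under $\Ric_f \le (n-1)H$ (note this is an \emph{upper} bound on $\Ric_f$, matching the statement; the comparison in this direction is the one needed) together with $|f|\le k$, one has $m_f(r) \le m_H^{n+4k}(r)$ along minimal geodesics from $p$. Subtracting the two radial-Laplacian identities gives
\[
\Delta_f u - \Delta_H^{n+4k} u = u'(r)\,\bigl( m_f(r) - m_H^{n+4k}(r)\bigr).
\]
If $u' \ge 0$ the right-hand side is $\le 0$, giving case (1); if $u' \le 0$ it is $\ge 0$, giving case (2). This proves the inequality on the open set where $r$ is smooth; to get it globally (in the barrier or distributional sense through the cut locus) one uses the standard Calabi trick — replacing $p$ by a nearby point $p_\varepsilon$ slightly along the minimizing geodesic and letting $\varepsilon \to 0$ — exactly as in the Riemannian case, which is legitimate because the $f$-Laplacian differs from $\Delta$ only by the smooth first-order term $-\langle\nabla\cdot,\nabla f\rangle$ that passes to the limit harmlessly.

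The only genuine point requiring care, and the place I would expect to spend the most attention, is making sure the hypotheses and the direction of monotonicity line up with \cite[Theorem 1.1]{WW}: the Wei–Wylie bound on $|f|$ produces an \emph{effective dimension shift} from $n$ to $n+4k$ in the model comparison (rather than keeping dimension $n$ with a modified $f$-mean curvature bound), and one must check that the sign conventions for $\Ric_f \le (n-1)H$ and for $u'$ are consistent throughout. Everything else is the routine radial-function computation and the standard handling of the cut locus, so the proposition follows immediately once those bookkeeping issues are settled.
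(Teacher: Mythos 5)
Your approach — the radial-function identity $\Delta_f u = u''(r) + u'(r)\,\Delta_f r$ combined with Wei--Wylie's mean curvature comparison and a sign analysis on $u'$ — is exactly what the paper has in mind: the paper gives no explicit proof, only the one-line remark that the proposition follows immediately from \cite[Theorem 1.1]{WW} and the definition of $\Delta_f$. The subtraction $\Delta_f u - \Delta_H^{n+4k} u = u'(r)\bigl(m_f(r) - m_H^{n+4k}(r)\bigr)$ and the treatment of the cut locus via Calabi's trick are correct.

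There is, however, one sign issue that needs correcting. The Wei--Wylie Mean Curvature Comparison \cite[Theorem 1.1(b)]{WW} that yields $m_f(r) \le m_H^{n+4k}(r)$ requires the \emph{lower} bound $\Ric_f \ge (n-1)H$ together with $|f|\le k$, not the upper bound $\Ric_f \le (n-1)H$ printed in the proposition. The $\le$ in the statement is a typo (compare the hypothesis $\Ric_f \ge (n-1)H$ used in Proposition \ref{QuantMaxP}, where this $f$-Laplacian comparison is invoked). Your parenthetical assertion that the upper bound on $\Ric_f$ is ``the comparison in this direction is the one needed'' is therefore the wrong direction: an upper bound on Ricci does not control the mean curvature of geodesic spheres from above. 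With the corrected hypothesis $\Ric_f \ge (n-1)H$, everything else in your argument goes through as written.
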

Using the definition of the $f$-Laplacian and the classical Bochner formula on Riemannian manifolds, one also immediately obtains a Bochner formula for smooth metric measure spaces
\begin{equation}\label{Bochner}
\frac{1}{2}\Delta_f(|\grad u|^2) = |\Hess u|^2 + \langle \nabla u, \nabla \Delta_f u \rangle + \Ric_f(\nabla u, \nabla u),
\end{equation}
 for any $u \in C^3(M)$.  This Bochner formula allows us to obtain the following gradient estimate.


\begin{prop}\label{GradEst}{\bf (Gradient Estimate)}
Let $(M^n, g, e^{-f} dvol)$ be a complete smooth metric measure space with $\Ric_f \geq -(n-1)H^2$ where $H \geq 0$.  If $u$ is a positive function defined on $\overline{B(q, 2R)}$ with $\Delta_f u= c$, $c \geq 0$, then for any $q_0 \in \overline{B(q, R)}$, we have
$$|\grad u| \leq \sqrt{c_1(\alpha, n, H, R) \sup_{p \in B(q; 2R)} u(p)^2 + c_2(c, n) \sup_{p \in B(q; 2R)} u(p)}$$
where 
$\alpha = \max_{p \in p: d(p,q) = r_0} \Delta_f r(p)$ for any $r_0 \leq R$ and $r(p) = d(p, q)$.  
\end{prop}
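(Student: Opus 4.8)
The plan is to adapt the classical Cheng--Yau / Li--Yau gradient-estimate argument to the $f$-Laplacian, following the structure of Brighton's proof in \cite{Bri} but allowing $\Delta_f u = c \geq 0$ rather than $\Delta_f u = 0$. Since $u$ is positive on $\overline{B(q,2R)}$, I would first normalize and work with $w = \log u$ or, more directly (as in Brighton), with a power $u^\beta$ for a suitable exponent $\beta \in (0,1)$; let me describe the $\log$ version for clarity. Set $\varphi = |\grad w|^2 = |\grad u|^2 / u^2$. Using the Bochner formula \eqref{Bochner} applied to $w$, together with $\Delta_f w = \Delta_f u / u - |\grad u|^2/u^2 = c/u - \varphi$ and the curvature hypothesis $\Ric_f \geq -(n-1)H^2$, I get a differential inequality of the form
\begin{equation*}
\tfrac{1}{2}\Delta_f \varphi \geq |\Hess w|^2 - \langle \grad \varphi, \grad w\rangle - (n-1)H^2 \varphi - \tfrac{c}{u^2}|\grad u|^2 + (\text{lower order in } c).
\end{equation*}
The term $|\Hess w|^2$ is then bounded below by $\tfrac{1}{n}(\Delta w)^2$ in the Riemannian case, but here one must be a bit more careful since it is $\Delta_f$, not $\Delta$, that is controlled; I would instead use $|\Hess w|^2 \geq \tfrac{1}{n}(\Delta w)^2$ and then relate $\Delta w$ to $\Delta_f w$ via $\Delta w = \Delta_f w + \langle \grad w, \grad f\rangle$, absorbing the cross term with Cauchy--Schwarz at the cost of constants depending on $|\grad f|$ on the ball --- but this is exactly the dependence I want to avoid. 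The key observation that makes the $|f|\le k$ hypothesis (rather than a gradient bound on $f$) sufficient is Brighton's trick: apply the argument not to $w$ directly but exploit that $\Delta_f$ is the drift Laplacian and use the cutoff/maximum-principle step in a form where the $\langle \grad w, \grad f\rangle$ term either does not appear or appears with a favorable sign.

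Concretely, the steps I would carry out are: (1) fix a cutoff function $\psi$ supported in $B(q,2R)$, equal to $1$ on $B(q,R)$, with $|\grad \psi|^2/\psi \leq C(R)$ and $\Delta_f \psi \geq -C(\alpha,n,R)$ --- here the $f$-Laplacian comparison, Proposition \ref{fLapComp}, enters to control $\Delta_f \psi$ in terms of $\alpha = \max_{d(p,q)=r_0}\Delta_f r(p)$, since $\psi$ is taken to be a function of $r = d(\cdot,q)$ composed with a fixed profile; (2) consider the function $G = \psi\,\varphi$ (or $\psi u^{2\beta}|\grad u|^2$ in the power formulation) and let it attain its maximum at an interior point $x_0$ of $B(q,2R)$; (3) at $x_0$ use $\grad G = 0$ and $\Delta_f G \leq 0$, substitute the Bochner inequality for $\Delta_f \varphi$, and use $\grad \varphi = -\varphi \grad\psi/\psi$ at $x_0$ to eliminate $\grad \varphi$; (4) this yields a quadratic inequality in $G(x_0)$ of the shape $a\,G(x_0)^2 \leq b\,G(x_0) + d$, where $a>0$ depends only on $n$, and $b, d$ depend on $n, H, R, \alpha$ and on $\sup u$ and $c$ through the lower-order terms; solving the quadratic and restricting to $B(q,R)$ where $\psi\equiv 1$ gives the stated bound, with the $c$-dependent term $c_2(c,n)\sup u$ arising precisely from the $c/u$ contribution to $\Delta_f w$ and the $u^2$-scaling coming from $\varphi = |\grad u|^2/u^2$.

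The main obstacle is step (3): ensuring that the cross term $\langle \grad w, \grad f\rangle$ (equivalently, the difference between $\Delta$ and $\Delta_f$ when one passes from $|\Hess w|^2 \geq \frac{1}{n}(\Delta w)^2$ to a bound involving $\Delta_f w$) does \emph{not} force a dependence on $\sup|\grad f|$. Brighton's resolution is to keep $\Delta_f$ throughout and use the Cauchy--Schwarz inequality $|\Hess w|^2 \geq \frac{1}{n-1}(\Delta w - w_{ii})^2$ splitting off the radial direction, or alternatively to work with the modified quantity so that the drift term is swallowed by the good term $|\Hess w|^2$ after completing the square; carrying this through with the constant $c$ present requires checking that the extra terms $c/u$ and $(c/u)^2$ do not disrupt the sign structure, which is why the hypothesis $c \geq 0$ is needed (a negative $c$ would flip the sign of the linear-in-$G$ contribution coming from $\langle \grad w, \grad (c/u)\rangle = -c|\grad u|^2/u^2 \cdot(\cdot)$ and the estimate would fail, consistent with the authors' remark). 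The remaining steps are routine once the cutoff is constructed via Proposition \ref{fLapComp} and the quadratic is solved.
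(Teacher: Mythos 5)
Your skeleton matches the paper's: take a power $h = u^{\epsilon}$ (the paper eventually fixes $\epsilon = 7/8$), apply the Bochner formula \eqref{Bochner} to $h$, bound $|\Hess h|^2 \geq (\Delta h)^2/n$, expand $\Delta h = \Delta_f h + \langle \grad f, \grad h\rangle$, build a radial cutoff controlled via the $f$-Laplacian comparison, form $G = \phi\,|\grad h|^2$, and run the maximum principle. But you hand-wave at exactly the step the paper has to work hardest, namely controlling the cross term produced by expanding $(\Delta h)^2/n$, and what you do say about it is not what Brighton or the paper does. The expansion produces both a helpful $\frac{1}{n}\langle \grad f, \grad h\rangle^2$ and a troublesome mixed term $\frac{2(\epsilon-1)}{\epsilon h n}|\grad h|^2 \langle \grad f, \grad h\rangle$ of indefinite sign; the resolution is a \emph{pointwise case dichotomy}, not a Cauchy--Schwarz that splits off the radial direction nor a reformulation that makes the drift term vanish. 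When $\langle \grad f, \grad h\rangle \leq a|\grad h|^2/h$ the mixed term is absorbed into the $|\grad h|^4/h^2$ term (shifting its coefficient to $\frac{(\epsilon-1)^2 + 2\epsilon(\epsilon-1)a - \epsilon(\epsilon-1)n}{\epsilon^2 n}$, which remains positive since $\epsilon-1<0$); when $\langle \grad f, \grad h\rangle \geq a|\grad h|^2/h$ it is absorbed into the $\langle \grad f, \grad h\rangle^2$ term, whose coefficient becomes $\frac{2(\epsilon-1)+\epsilon a}{\epsilon n a}$. Choosing $\epsilon = 7/8$, $a = 1/2$ makes both coefficients positive simultaneously. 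Without this dichotomy your quadratic inequality for $G(x_0)$ simply does not close unless you reintroduce a bound on $|\grad f|$ --- which is what you are trying to avoid. The sign hypothesis $c\geq 0$ also enters concretely inside the second case, where a leftover term $\frac{2c\epsilon a}{nh^{1/\epsilon}}|\grad h|^2$ must be dropped with the correct sign; your explanation ("a negative $c$ would flip the sign of the linear-in-$G$ contribution") gestures in the right direction but is not the precise obstruction.

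Two smaller points. First, Proposition \ref{GradEst} as stated makes \emph{no} hypothesis $|f|\le k$; the paper explicitly notes this. The $f$-dependence is packaged into $\alpha = \max_{d(p,q)=r_0}\Delta_f r(p)$, and the boundedness of $f$ is invoked only afterwards, in Theorem \ref{GradEst2}, to replace $\alpha$ by $(n+4k-1)H\coth(HR/2)$ via Proposition \ref{fLapComp}. Second, the passage from $\log u$ to $u^{\epsilon}$ is not cosmetic: the fractional power $\epsilon\in(0,1)$ is precisely what produces the $\frac{1}{n}\langle\grad f,\grad h\rangle^2$ term with a coefficient the case analysis can trade against the mixed term; your $\log$-based presentation, where $\Delta_f w = c/u - |\grad w|^2$, does not manufacture this square, and "the power version works similarly" is exactly the part you cannot skip.
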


Note that in the above gradient estimate we make no assumption on the potential function $f$. We include a sketch of the proof below, which modifies the proof of Brighton's gradient estimate for $f$-harmonic functions in \cite{Bri} to consider the case of $\Delta_f u = c$, for a positive constant $c$.  

\begin{proof}
Let $h = u^\epsilon$ where $\epsilon \in (0,1)$.  Applying \eqref{Bochner} to $h$ gives
$$ \frac{1}{2} \Delta_f |\nabla h|^2 = |\Hess h|^2 + \langle \nabla h, \nabla(\Delta_f h) \rangle + \Ric_f(\nabla h, \nabla h).$$

Using the Schwartz inequality, we have
\begin{align*}
|\Hess h|^2 & \geq \frac{|\Delta h|^2}{n}
\\ & = \frac{1}{n}(\Delta_f h + \langle \nabla f, \nabla h \rangle)^2
\\ & = \frac{1}{n}\left(\epsilon u^{\epsilon-1} \Delta_f u + \frac{(\epsilon-1)|\grad h|^2}{\epsilon h} + \langle \grad f, \grad h \rangle \right)^2
\\ & = \frac{1}{n}\left(\epsilon u^{\epsilon-1} c + \frac{(\epsilon-1)|\grad h|^2}{\epsilon h} + \langle \grad f, \grad h \rangle \right)^2
\end{align*}
where in the last equality we used the fact that $\Delta_f u = c > 0$.  
This, together with the lower bound on the Bakry-Emery Ricci tensor give

\begin{equation}\label{Prelim}
\begin{split}
\frac{1}{2} \Delta_f |\nabla h|^2  \geq
& \frac{(\epsilon-1)^2}{\epsilon^2 h^2 n}|\grad h|^4+ \frac{2c(\epsilon-1)}{h^{1/\epsilon}n}|\grad h|^2 + \frac{2(\epsilon-1)}{\epsilon h n}|\grad h|^2\langle \grad f, \grad h\rangle + \frac{\epsilon^2c^2}{n}(h^{2-2/\epsilon}) 
\\ &  + \frac{2c\epsilon}{n}(h^{1-1/\epsilon}) \langle \grad f, \grad h \rangle + \frac{1}{n}\langle \grad f, \grad h \rangle^2 + \frac{(\epsilon - 1)}{\epsilon h}\langle \grad h, \grad|\grad h|^2\rangle 
\\ &  -\frac{(\epsilon-1)}{\epsilon h^2}|\grad h|^4 + \frac{c(\epsilon-1)}{h^{1/\epsilon}}|\grad h|^2 - (n-1)H^2|\grad h|^2
\end{split}
\end{equation}

In order to control the mixed term $2\frac{(\epsilon-1)}{\epsilon h n}|\grad h|^2\langle \grad f, \grad h \rangle$ in \eqref{Prelim}, we consider two cases according to whether $|\grad h|^2$ dominates over $\langle \grad h, \grad f \rangle$, or vice versa.  In the first case, suppose that  $p \in \overline{B(q, 2R)}$ such that $\langle \grad h, \grad f \rangle \leq a\frac{|\grad h|^2}{h}$ for some $a > 0$ to be determined. At this point we have 

\begin{align}\label{Case1}
\begin{split}
\frac{1}{2}\Delta_f |\grad h|^2 a& \geq \left[\frac{(\epsilon-1)^2 + 2\epsilon(\epsilon-1)a - \epsilon(\epsilon-1)n}{\epsilon^2 n}	\right]\frac{|\grad h|^4}{h^2} + \left[\frac{c(\epsilon-1)(2+n)}{n}	\right]\frac{|\grad h|^2}{h^{1/\epsilon}}
\\ & \hspace{5 mm} + \frac{1}{n}(\epsilon c h^{1-1/\epsilon} + \langle \grad f, \grad h\rangle)^2 + \frac{\epsilon-1}{\epsilon h}\langle \grad h, \grad |\grad h|^2 \rangle -(n-1)H^2 |\grad h|^2 
\\ & \geq \left[\frac{(\epsilon-1)^2 + 2\epsilon(\epsilon-1)a - \epsilon(\epsilon-1)n}{\epsilon^2 n}	\right]\frac{|\grad h|^4}{h^2} + \left[\frac{c(\epsilon-1)(2+n)}{n}	\right]\frac{|\grad h|^2}{h^{1/\epsilon}}
\\ & \hspace{5 mm} + \frac{\epsilon-1}{\epsilon h}\langle \grad h, \grad |\grad h|^2 \rangle -(n-1)H^2 |\grad h|^2
\end{split}
\end{align}

In the case that $p \in \overline{B(q, 2R)}$ such that $\langle \grad h, \grad f \rangle \geq a\frac{|\grad h|^2}{h}$, we have
\begin{align}\label{Case2}
\begin{split}
\frac{1}{2}\Delta_f|\grad h|^2 & \geq  \left[\frac{(\epsilon-1)^2 - \epsilon(\epsilon - 1)n}{\epsilon^2 n}  \right]\frac{|\grad h|^4}{h^2} + \left[\frac{c(\epsilon-1)(2+n)}{n}	\right]\frac{|\grad h|^2}{h^{1/\epsilon}} 
\\ & \hspace{5 mm}  +\left[\frac{2(\epsilon-1) + \epsilon a}{\epsilon n a}	\right]\langle \grad f, \grad h \rangle^2+\frac{\epsilon^2 c^2}{n}(h^{2 - 2/\epsilon}) + \frac{2c\epsilon a}{nh^{1/\epsilon}}|\grad h|^2 
\\ & \hspace{5 mm} + \frac{\epsilon-1}{\epsilon h}\langle \grad h, \grad |\grad h|^2 \rangle -(n-1)H^2 |\grad h|^2 
\\ & \geq \left[\frac{(\epsilon-1)^2 - \epsilon(\epsilon - 1)n}{\epsilon^2 n}  \right]\frac{|\grad h|^4}{h^2} + \left[\frac{c(\epsilon-1)(2+n)}{n}	\right]\frac{|\grad h|^2}{h^{1/\epsilon}} 
\\ & \hspace{5 mm} +\left[\frac{2(\epsilon-1) + \epsilon a}{\epsilon n a}	\right]\langle \grad f, \grad h \rangle^2
 + \frac{\epsilon-1}{\epsilon h}\langle \grad h, \grad |\grad h|^2 \rangle -(n-1)H^2 |\grad h|^2 
\end{split}
\end{align}
Note that in \eqref{Case2} the assumption that $c \geq 0$ is necessary to have $\frac{2c\epsilon a}{nh^{1/\epsilon}}|\nabla h|^2 \geq 0$ which allows us to obtain the second inequality.

As in Brighton's proof, we see that choosing $\epsilon= \frac{7}{8}$ and $a = \frac{1}{2}$ will make the coefficient of the $\frac{|\grad h|^4}{h^2}$ term positive in both cases.  This choice also gives a positive coefficient of the $\langle \grad f, \grad h \rangle^2$ term in the second case.  With this choice of $\epsilon$ and $a$, we see that for every $p \in \overline{B(q, 2R)}$, we have
\begin{equation}\label{PrelimEst}
\frac{1}{2}\Delta_f |\grad h|^2 \geq \frac{7n-6}{49n}\frac{|\grad|h|^4}{h^2} - \frac{c(2+n)}{8n}\frac{|\grad h|^2}{h^{8/7}} -\frac{1}{7h}\langle \grad h, \grad|\grad h|^2 \rangle - (n-1)H^2|\grad h|^2.
\end{equation}
Let $g: [0,2R] \to [0,1]$ have the properties
\begin{itemize}
\item $g|_{[0,2R]} = 1$
\item supp$(g) \subseteq [0,2R)$
\item $\frac{-K}{R}\sqrt g \leq g' \leq 0$
\item $|g''| \leq \frac{K}{R^2}$
\end{itemize}
where the last two properties hold for some $K > 0$.  Define $\phi: \overline{B(q, 2R)} \to [0,1]$ by $\phi(x) = g(d(x, q))$.  Set $G = \phi|\grad h|^2$.  Then \eqref{PrelimEst} can be written as
\begin{equation}\label{SecondEst}
\begin{split}
\frac{1}{\phi} \Delta_f G \geq \frac{G}{\phi^2} \Delta_f \phi &+ 2 \langle \frac{\grad \phi}{\phi}, \frac{\grad G}{\phi} - \frac{\grad \phi}{\phi^2} G \rangle + \frac{14n-12}{49nh^2}\frac{G^2}{\phi^2} 
\\ &- \frac{c(2+n)}{4nh^{8/7}}\frac{G}{\phi} 
- \frac{2}{7h}\langle \grad h, \frac{\grad G}{\phi} - \frac{\grad \phi}{\phi^2} G \rangle-2(n-1)H^2\frac{G}{\phi}.
\end{split}
\end{equation}

Next, we consider the point $q_0 \in \overline{B(q, 2R)}$ at which $G$ achieves its maximum.  At such a point, \eqref{SecondEst} can be rewritten as
\begin{equation}\label{MaxPointEst}
\frac{14n-12}{49nh^2}G \leq -\Delta_f \phi + 2\langle \frac{\grad \phi}{\phi}, \grad \phi \rangle + \frac{c(2+n)}{4nh^{8/7}} \phi - \frac{2}{7h}\langle \grad h, \grad \phi \rangle + 2(n-1)H^2 \phi.
\end{equation}

If $q_0 \in B(q, R)$, then  \eqref{MaxPointEst} can be rewritten as
\begin{equation}\label{EasyGradEst}
|\grad u|^2 \leq \frac{8c(2+n)}{7n-6}u + \frac{64n(n-1)}{7n-6}H^2 u^2
\end{equation}
when evaluated at $q_0$.

If $q_0 \in \overline{B(q,2R)} \setminus B(q, R)$, one uses the mean curvature comparison \cite[Theorem 2.1]{WW}, the properties of $\phi$, and \eqref{MaxPointEst} to obtain
\begin{equation}\label{HardGradEst}
|\grad u|^2 \leq \frac{64}{13n-12}\left[\frac{K\alpha R + K + 3K^2}{R^2} + 2(K+1)(n-1)H^2 \right]u^2 + \frac{16c(2+n)}{n(13n-12)} u
\end{equation}
at the point $q_0$.

Taking the supremum of $u$ and $u^2$ over $\overline{B(q, 2R)}$ in \eqref{EasyGradEst} and \eqref{HardGradEst} yields the desired form of the gradient estimate.  
\end{proof}

Since for our purposes the potential function is bounded, that is $|f|\leq k$, we may use the $f$-Laplacian comparison, Proposition \ref{fLapComp}, to modify the above gradient estimate, Proposition \ref{GradEst}, slightly.  By setting $r_0 = R/2$ we may apply the $f$-Laplacian comparison directly to $\alpha$ to obtain
$$\alpha \leq (n+4k-1)H \coth(HR/2).$$
Alternatively, we may use Proposition \ref{fLapComp} in place of the mean curvature comparison \cite[Theorem 2.1]{WW} when obtaining \eqref{HardGradEst}.  In that case,
$$ \Delta_f r(q_0) \leq (n+4k-1)H \coth (Hr(q_0)) \leq (n+4k-1)H \coth (HR)$$
since $R \leq r(q_0) \leq 2R$.  
In either case, one will obtain a gradient estimate as in Theorem \ref{GradEst2} which is no longer dependent upon $\alpha$.  

Note that this gradient estimate only holds for nonnegative $c$.  If we consider the case $c < 0$, the term $\frac{2c\epsilon}{n}(h^{1-1/\epsilon}) \langle \grad f, \grad h \rangle$ in \eqref{Prelim} becomes problematic.  In particular, when $p \in \overline{B(q, 2R)}$ is such that $\langle \grad h, \grad f \rangle$ dominates over $|\grad h|^2$, we replace the term $\frac{2(\epsilon-1)}{\epsilon h n}|\grad h|^2\langle \grad f, \grad h \rangle$ by $\frac{2(\epsilon - 1)}{\epsilon h n a} \langle \grad f, \grad h \rangle$.  In order to control this term, we group it with $\frac{1}{n}\langle \grad f, \grad h \rangle^2$.  Then we can no longer group the $\langle \grad f, \grad h\rangle$ term with other terms to create a perfect square, as in \eqref{Case1}.  Moreover, since its coefficient is negative, we must keep this term in the estimate.  Thus, without any additional assumptions, such as a bound on $|\grad f|$, there is no way to control this term.  As noted in the introduction, the assumption of a bound on $|f|$ rather than a bound on $|\grad f|$ in this gradient estimate is one of the reasons for the difference in hypotheses of the Almost Splitting Theorem of Wang and Zhu \cite[Theorem 3.1]{WZ} and Theorem \ref{Splitting}.

Finally, in order to convert estimates of integrals of functions over a ball to estimates  of integrals of functions along a geodesic segment,  we need the following Segment Inequality.

\begin{prop}\label{SegmentInequality}{\bf (Segment Inequality)}
Let $(M^n, g, e^{-f}\dvol_g)$ be a complete metric measure space with $Ric_f \geq (n-1)H$ and  $|f(x)| \leq k$.  
Let $A_1, A_2 \in M^n$ be open sets and assume for all $y_1 \in A_1$, $y_2 \in A_2$, there is a minimal geodesic, $\gamma_{y_1, y_2}$ from $y_1$ to $y_2$, such that for some open set, $W$, 
$$ \bigcup_{y_1, y_2} \gamma_{y_1, y_2} \subset W.$$
If $v_i$ is a tangent vector at $y_i$, $i = 1,2$, and $|v_i| = 1$, set
$$I(y_i, v_i)=  \{t | \gamma(t) \in A_{i+1} , \gamma |[0,t] \text{ is minimal} , y'(0) = v_i\}.$$
Let $|I(y_i, v_i)|$ denote the measure of $I(y_i, v_i)$ and put
$$\mathcal{D}(A_i, A_{i+1} )= \sup_{y_1, y_2} |I(y_i, v_i)|.$$
Here $A_{2+1} := A_1$.  
Let $h$ be a nonnegative integrable function on $M$.  Let $D = \max d(y_1,y_2)$.  Then 

\begin{align*}
 \int_{A_1 \times A_2} &\int_0 ^{\overline{y_1, y_2}} h(\gamma_{y_1,y_2}(s))ds (e^{-f}\dvol_g)^2 \leq
 \\ & c(n + 4k, H, D)[\mathcal{D}(A_1, A_2) Vol_f(A_1) + \mathcal{D}(A_2, A_1) Vol_f(A_2)] \times \int_W h e^{-f}\dvol_g
 \end{align*}

where $c(n+4k, H, D) = \sup_{0 < s/2 \leq u \leq s} \mathcal{A}_H^{n+4k}(s)/\mathcal{A}_H^{n+4k}(u)$, where $\mathcal{A}_H^{n+4k}(r)$ denotes the area element on $\partial B(r)$ in the model space with constant curvature $H$ and dimension $n+4k$.  
\end{prop}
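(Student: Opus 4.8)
The plan is to transcribe the proof of Cheeger and Colding \cite{CC} (see also \cite{Ch}), carrying out every volume computation with respect to $e^{-f}\dvol_g$ and replacing the Bishop--Gromov area element comparison by its weighted analogue with the dimension shift $n\mapsto n+4k$. The only geometric input beyond Fubini--Tonelli is the following consequence of the mean curvature comparison \cite[Theorem 1.1]{WW} (equivalently, Proposition \ref{fLapComp} applied to the radial distance): fixing $y\in M$ and writing points near $y$ in geodesic polar coordinates $(r,\theta)$, $\theta\in S_y$, with Riemannian area density $\mathcal{A}(r,\theta)$ along $\gamma_\theta$, the weighted density $\mathcal{A}_f(r,\theta):=e^{-f(\gamma_\theta(r))}\mathcal{A}(r,\theta)$ has the property that $r\mapsto \mathcal{A}_f(r,\theta)/\mathcal{A}_H^{n+4k}(r)$ is non-increasing along the minimizing portion of $\gamma_\theta$, since $\frac{d}{dr}\log\mathcal{A}_f(r,\theta)$ is the $f$-mean curvature $m(r,\theta)-\partial_r f\leq m_H^{n+4k}(r)=\frac{d}{dr}\log\mathcal{A}_H^{n+4k}(r)$. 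Hence for $0<s\leq r$ on this portion, $\mathcal{A}_f(r,\theta)\leq\big(\mathcal{A}_H^{n+4k}(r)/\mathcal{A}_H^{n+4k}(s)\big)\mathcal{A}_f(s,\theta)$, and if in addition $s\leq r\leq 2s$ and $r\leq D$ the prefactor is $\leq c(n+4k,H,D)$. Finally, in these coordinates $e^{-f}\dvol_g=\mathcal{A}_f(r,\theta)\,dr\,d\theta$, so $\int_M\varphi\,e^{-f}\dvol_g=\int_{S_y}\int_0^{\bar r(\theta)}\varphi(\gamma_\theta(s))\mathcal{A}_f(s,\theta)\,ds\,d\theta$ for $\varphi\geq 0$, where $\bar r(\theta)$ is the distance to the cut locus in direction $\theta$.

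For a minimizing geodesic $\gamma=\gamma_{y_1,y_2}$ of length $\ell=\overline{y_1,y_2}$ I would split $\int_0^{\ell}h(\gamma(s))\,ds=\int_{\ell/2}^{\ell}h(\gamma(s))\,ds+\int_0^{\ell/2}h(\gamma(s))\,ds$. The substitution $s\mapsto\ell-s$ turns the second integral into $\int_{\ell/2}^{\ell}h(\gamma_{y_2,y_1}(t))\,dt$ along the reversed geodesic, so it is handled by the estimate for the first integral with the roles of $A_1,A_2$ (and $y_1,y_2$) interchanged. Hence it suffices to prove
$$\int_{A_1\times A_2}\int_{\ell/2}^{\ell}h(\gamma_{y_1,y_2}(s))\,ds\,(e^{-f}\dvol_g)^2\;\leq\;c(n+4k,H,D)\,\mathcal{D}(A_1,A_2)\,\textrm{Vol}_f(A_1)\int_W h\,e^{-f}\dvol_g.$$

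To prove this I would fix $y_1\in A_1$ and pass to polar coordinates at $y_1$: write $y_2=\gamma_\theta(r)$ with $r=\overline{y_1,y_2}$, $\theta=\gamma_{y_1,y_2}'(0)$, and set $I_2(\theta)=\{r:\gamma_\theta(r)\in A_2,\ \gamma_\theta|_{[0,r]}\text{ minimal}\}=I(y_1,\theta)$, so $|I_2(\theta)|\leq\mathcal{D}(A_1,A_2)$. Then
$$\int_{A_2}\int_{\ell/2}^{\ell}h(\gamma(s))\,ds\,(e^{-f}\dvol_g)(y_2)=\int_{S_{y_1}}\int_{I_2(\theta)}\Big(\int_{r/2}^{r}h(\gamma_\theta(s))\,ds\Big)\mathcal{A}_f(r,\theta)\,dr\,d\theta.$$
Since $h\geq 0$, Tonelli's theorem lets me rewrite the inner double integral over $\{(r,s):r\in I_2(\theta),\ r/2\leq s\leq r\}$ as an integral over $s$ with $r$ ranging over $I_2(\theta)\cap[s,2s]$; on that set $s\leq r\leq 2s$ and $r\leq D$, so $\mathcal{A}_f(r,\theta)\leq c(n+4k,H,D)\mathcal{A}_f(s,\theta)$ and $|I_2(\theta)\cap[s,2s]|\leq\mathcal{D}(A_1,A_2)$. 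This bounds the right-hand side by $c(n+4k,H,D)\,\mathcal{D}(A_1,A_2)\int_{S_{y_1}}\int h(\gamma_\theta(s))\mathcal{A}_f(s,\theta)\,ds\,d\theta$, where the contributing $\gamma_\theta(s)$ all lie on minimizing geodesics from $y_1\in A_1$ to points of $A_2$, hence in $W$; as $h\geq 0$ this last quantity is $\leq\int_W h\,e^{-f}\dvol_g$ by the polar coordinate formula. Integrating over $y_1\in A_1$ against $e^{-f}\dvol_g$ yields the displayed estimate, and adding the symmetric bound for the $[0,\ell/2]$ piece completes the proof.

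The argument is essentially a routine transcription once the weighted area comparison is in hand, and that comparison is exactly the Wei--Wylie mean curvature comparison \cite{WW}, the shift $n\to n+4k$ being what the hypothesis $|f|\leq k$ buys. The one genuinely delicate point is the measure theory near the cut locus: one should run the polar coordinate integrals over the star-shaped region on which $\exp_{y_1}$ is a diffeomorphism onto the complement of the (measure-zero) cut locus, and lean on $h\geq 0$ together with Tonelli to justify all interchanges of order of integration and the replacement of integration over $W$ by integration over $M$.
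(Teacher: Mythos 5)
Your proposal is correct and takes essentially the same approach as the paper: the paper itself gives no detailed argument, stating only that one follows Cheeger--Colding's proof of \cite[Theorem 2.11]{CC} with $\dvol_g$ replaced by $e^{-f}\dvol_g$ and the volume element comparison replaced by the one following from \cite[Theorem 1.1]{WW}. That weighted comparison is exactly the monotonicity of $\mathcal{A}_f(r,\theta)/\mathcal{A}_H^{n+4k}(r)$ that you deduce from $m_f\leq m_H^{n+4k}$ under $|f|\leq k$, and the splitting $\int_0^\ell=\int_{\ell/2}^\ell+\int_0^{\ell/2}$, the Tonelli interchange, and the care at the cut locus are the standard details of that transcription.
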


To obtain this result for smooth metric measure spaces one may follow the arguments of the proof in the original setting as given by Cheeger and Colding in \cite[Theorem 2.11]{CC}.   
In the smooth metric measure space setting, integrals are computed with respect to the conformally changed volume element, $e^{-f}\dvol_g$, and we use the volume element comparison which follows from \cite[Theorem 1.1]{WW}.

Finally, the Abresch-Gromoll Quantiatative Maximal Principle  was necessary in the proof of the Abresch-Gromoll inequality and also in obtaining an appropriate cutoff function needed to prove the Hessian estimate.   Since this proof varies slightly from the exposition contained in \cite{AG} or \cite{Ch}, we retain the proof here.

\begin{prop}\label{QuantMaxP}{\bf (Quantitative Maximal Principle)}
If $Ric_f \geq (n-1)H$, $(H \leq 0)$, $|f| \leq k$ and $U: \overline{B(y,R)} \subset M^n \to \R$ is a Lipschitz function with

\begin{enumerate}
\item $Lip(U) \leq a$, $U(y_0) = 0$ for some $y_0 \in B(y,R)$,
\item $\Delta_f U \leq b$ in the barrier sense, $U|_{\partial B(y,R)} \geq 0$.
\end{enumerate}
Then $U(y) \leq ac + b G_R(c)$ for all $0 < c < R$, where $G_R(r(x))$ is the smallest function on the model space $M_H^{n+4k}$ such that:
\begin{enumerate}
\item $G_R(r) > 0$, $G'_R(r) < 0$ for $0 < r < R$
\item $\Delta_H G_R \equiv 1$ and $G_R(R) = 0$.
\end{enumerate}
\end{prop}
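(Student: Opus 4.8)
The plan is to mimic the classical Abresch--Gromoll argument, comparing $U$ on $\overline{B(y,R)}$ with the radial barrier $G_R$ on the model space $M_H^{n+4k}$, but using the $f$-Laplacian comparison of Proposition~\ref{fLapComp} in place of the ordinary Laplacian comparison. First I would record the elementary properties of $G_R$: it is defined as a radial function solving $\Delta_H^{n+4k} G_R \equiv 1$ with $G_R(R)=0$, and from the ODE one reads off that $G_R>0$ and $G_R'<0$ on $(0,R)$ — this is just integrating the model-space equation once, so I would not belabor it. The point of $G_R$ is that it is a \emph{supersolution} barrier whose value $G_R(r(x))$ increases as one moves toward the center, with a quantitative profile depending only on $n+4k$, $H$, and $R$.

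Next, for a fixed $0<c<R$ I would consider the comparison function
\[
\Phi(x) = ac - a\, r(x) + b\, G_R(r(x)) - b\, G_R(c)
\]
on the closed annulus $\overline{A} = \overline{B(y,R)} \setminus B(y,c)$, where $r(x) = d(x,y)$. The idea is that $U \le \Phi$ on $\overline{A}$. I would verify this by the comparison principle on $\overline{A}$: on the outer boundary $\partial B(y,R)$ we have $U \ge 0$, wait — here one needs $U \le \Phi$, so actually I would compare in the other direction and use the sign hypotheses carefully. The correct setup is to show $U(x) \le ac + bG_R(c)$ by first noting $U(y_0)=0$ with $y_0 \in B(y,R)$ and $\mathrm{Lip}(U)\le a$ gives a crude bound, then sharpening: on $\overline{A}$ the function $\Phi(x) = a(c - r(x)) + b(G_R(r(x)) - G_R(c))$ satisfies $\Phi \ge 0$ on $\partial B(y,c)$ (equality) and, using $G_R \ge 0$ and $r \le R$, one checks $\Phi$ dominates $U$ on $\partial B(y,R)$; in the barrier sense $\Delta_f \Phi = -a\,\Delta_f r + b\,\Delta_f G_R(r) \le b \cdot 1 = b$ when $r' \ge 0$, by Proposition~\ref{fLapComp} applied to the radial functions $-r$ (with $(-r)'\le 0$, giving $\Delta_f(-r)\ge \Delta_H^{n+4k}(-r)$, i.e. $-\Delta_f r \le -\Delta_H^{n+4k}r \le 0$) and to $G_R(r)$ (with $G_R'\le 0$, giving $\Delta_f G_R(r) \ge \Delta_H^{n+4k}G_R = 1$ — hmm, wrong direction again). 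This sign-chasing is exactly where care is needed: one must pick the combination so that the comparison function is an $f$-superharmonic barrier for $U - b\,(\text{something})$, which forces using part (2) of Proposition~\ref{fLapComp} for $G_R(r)$ since $G_R' \le 0$, yielding $\Delta_f\bigl(G_R(r)\bigr) \ge \Delta_H^{n+4k} G_R \equiv 1$, and part (2) for $-r$. Then $\Delta_f(U - b\,G_R(r)) \le b - b = 0$ on the annulus where $r$ is smooth (i.e. away from $y$ and the cut locus; on the cut locus the usual barrier/support-function argument of Calabi applies, and $\Delta_f r$ only gets more negative there).

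Thus $V := U - b\,G_R(r)$ is $f$-subharmonic on $\overline{B(y,R)}\setminus\{y\}$ in the barrier sense, and the maximum principle for $\Delta_f$ (valid since $\Delta_f$ is a uniformly elliptic operator with no zeroth-order term) gives that $V$ attains its maximum on the boundary $\partial B(y,R) \cup \{y\}$; the singularity at $y$ is removable because $G_R(r)\to G_R(0)<\infty$ is bounded there while $U$ is Lipschitz. On $\partial B(y,R)$ we have $U\ge 0$ so $V \ge -b\,G_R(R) = 0$, but we want an \emph{upper} bound, so I would instead apply the maximum principle to $V$ on the annulus $\overline{B(y,R)}\setminus B(y,c)$ after adding the linear-in-$r$ correction $a(R-r)$ to dominate $U$ on the outer sphere via the Lipschitz bound and $U(y_0)=0$; the maximum of $U(x) - b\,G_R(r(x)) - a(R - r(x))$ over this annulus is then controlled by its values on $\partial B(y,c)$, which after rearranging yields $U(y) \le ac + b\,G_R(c)$ once one lets $x \to y$ through points at distance $c$ and uses $\mathrm{Lip}(U)\le a$ to bridge the remaining gap of radius $c$.

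The main obstacle I anticipate is purely bookkeeping with the signs and the direction of the inequalities in Proposition~\ref{fLapComp}: the proposition gives $\Delta_f u \le \Delta_H^{n+4k}u$ for increasing radial $u$ and the reverse for decreasing $u$, and one must combine $-a\,r(x)$ (decreasing) and $b\,G_R(r(x))$ (decreasing since $G_R'<0$) so that the resulting comparison function is genuinely an $f$-supersolution of the right equation; getting this exactly right — together with handling the cut locus via Calabi's trick and the removable singularity at the center — is the delicate part, whereas the maximum principle itself and the ODE analysis of $G_R$ are routine. I would remark that the specific dimension shift $n \mapsto n+4k$ is inherited entirely from Proposition~\ref{fLapComp}, so no new input about $f$ beyond $|f|\le k$ is needed.
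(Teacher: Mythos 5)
Your general strategy — compare $U$ with the model barrier $G_R$ via Proposition~\ref{fLapComp} and apply a maximum principle on the annulus — is the right one, and you correctly identify that $G_R' < 0$ forces part (2) of the comparison, giving $\Delta_f\bigl(G_R(r)\bigr) \ge \Delta_H^{n+4k} G_R \equiv 1$. But the proposal never actually closes, and the place where it stalls is exactly where the paper's proof is doing something you did not reconstruct.

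The paper does \emph{not} build a pointwise barrier $\Phi$ dominating $U$ on the whole annulus (which is where your sign-chasing goes around in circles). Instead it sets $V = b\,G_R(r) - U$ (note the sign: your $V = U - b\,G_R(r)$ has $\Delta_f V \le 0$, so it is $f$-\emph{super}harmonic and would attain its \emph{minimum} on the boundary — you call it subharmonic and claim it maximizes on the boundary, which is a genuine sign error). With the paper's sign, $\Delta_f V = b\,\Delta_f G_R - \Delta_f U \ge b - b = 0$, so $V$ is $f$-subharmonic and the maximum principle on $\overline{A(y,c,R)}$ gives $V \le \max\{V|_{\partial B(y,R)}, V|_{\partial B(y,c)}\}$. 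The two hypotheses you underuse now do all the remaining work: $G_R(R)=0$ and $U|_{\partial B(y,R)}\ge 0$ force $V|_{\partial B(y,R)} \le 0$, while $U(y_0)=0$ forces $V(y_0) = b\,G_R(r(y_0)) > 0$. That sign contrast is the engine of the argument, and your proposal never invokes $U(y_0)=0$ in this way. The proof then bifurcates: if $y_0 \in A(y,c,R)$, then $V(y_0)>0$ combined with $V\le 0$ on the outer sphere forces a point $y'\in\partial B(y,c)$ with $V(y')>0$, i.e.\ $U(y') < b\,G_R(c)$, and the Lipschitz bound bridges the last $c$ of distance: $U(y) \le U(y') + ac < ac + b\,G_R(c)$; if instead $d(y,y_0)\le c$, one uses Lipschitz directly from $y_0$. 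This case split on the location of $y_0$ is essential and entirely absent from your write-up.

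Finally, the $a(R-r)$ correction you propose to add does not yield a usable differential inequality: you would need a \emph{lower} bound on $\Delta_f r$, but the $f$-Laplacian comparison for the increasing radial function $r$ only gives an \emph{upper} bound $\Delta_f r \le \Delta_H^{n+4k} r$, so $\Delta_f(U - b\,G_R(r) - a(R-r))$ has no controlled sign. The Lipschitz hypothesis enters the paper's proof only through the triangle-inequality bridge $U(y) \le U(y') + a\,d(y,y')$, not through any radial term in the barrier.
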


\begin{proof}
Let $G_R(r)$ be the comparison function in the model space $M_H^{n+4k}$ as given in the statement of the theorem.  By the $f$-Laplacian Comparison, one has
$$ \Delta_f G_R \geq \Delta_H^{n+4k} G_R = 1.$$
Consider the function $V = b G_R - U$.  Then 
$$ \Delta_f V = b \Delta_f G_R - \Delta_f U \geq b \Delta_H^{n+4k} G_R - \Delta_f U = b - \Delta_f U \geq 0. $$
Then the maximal principle on $V: \overline{A(y,c,R)} \to \R$ gives
$$ V(x) \leq \max\{V|_{\partial B(y,R)}, V|_{\partial B(y,c)}\}$$
for all $x \in \overline{A(y,c,R)}.$ 
By assumption, we have 
$$V|_{\partial B(y,R)} = b G_R|_{\partial B(y,R)} - U|_{\partial B(y,R)} \leq 0$$
and 
$$V(y_0) = b G_R(y_0) - U(y_0) = b G_R(y_0) > 0.$$
Then there are two cases.  

If $y_0 \in A(y,c,R)$, then $\max_V|_{\partial B(y,c)} > 0$ so $V(y') > 0$ for some $y' \in \partial B(y,c)$.  Since 
$$ U(y) - U(y') \leq a \cdot d(y,y') = ac$$
and 
$$ b G_R(c) - U(y') = V(y') > 0,$$
it follows that 
$$ U(y) \leq ac + U(y') \leq ac + b G_R(c).$$
On the other hand, if $d(y, y_0) \leq c$, we may use the Lipschitz condition directly:
$$ U(y) = U(y) - U(y_0) \leq a \cdot d(y,y_0) \leq ac \leq ac + b G_R(c).$$
In either case, we have $U(y) \leq ac + b G_R(c)$ for all $0 < c < R$, as desired.  
\end{proof}

For any point $x \in M$, the excess function at $x$ is given by
\begin{equation} \label{excess}
e(x) = d(x, q_+) + d(x,q_-) - d(q_+, q_-).
\end{equation}
where $q_+, q_- \in M$ are fixed.  The Abresch-Gromoll Inequality for Riemannian manifolds gives an upper bound on the excess function in terms of a function
\begin{equation}\label{Psi}
\Psi = \Psi(\epsilon_1, \dots, \epsilon_k | c_1, \dots, c_N)
\end{equation}
such that $\Psi \geq 0$ and for any fixed $c_1, \dots, c_N$,
$$\lim_{\epsilon_1, \dots, \epsilon_k \to 0} \Psi = 0.$$
Such $\epsilon_i, c_i$ will be given explicitly below.  The proof of the Abresch-Gromoll Inequality for smooth metric measure spaces runs parallel to the proof one finds in \cite[Theorem 9.1]{Ch}, with the modification that one uses the 
Quantitative Maximal Principal \ref{QuantMaxP} together with the $f$-Laplacian Comparison \ref{fLapComp} in place of their Riemannian counterparts.  

\begin{prop}\label{AGIneq}{\bf (Abresch-Gromoll Inequality)}
 Given $R > 0$, $L > 2R + 1$ and $\epsilon > 0$, for any $p, q_+, q_- \in M^n$, if \eqref{hyp1} - \eqref{hyp2} hold, then
$$ e(x) \leq \Psi(H, L^{-1}, \epsilon | n, k, R)$$
on $B(p,R).$
\end{prop}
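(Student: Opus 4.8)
The plan is to reproduce the proof of the Abresch--Gromoll inequality in the Riemannian setting (as in \cite[Theorem~9.1]{Ch}), systematically replacing the Laplacian by $\Delta_f$, the measure $\dvol_g$ by $e^{-f}\dvol_g$, and the model dimension $n$ by $n+4k$, and invoking the $f$-Laplacian Comparison (Proposition~\ref{fLapComp}) and the Quantitative Maximal Principle (Proposition~\ref{QuantMaxP}) wherever their Riemannian counterparts are used; this last substitution is exactly where the hypothesis $|f|\le k$ enters. To set up, write $b_\pm:=d(\cdot,q_\pm)$, so $e=b_++b_--d(q_+,q_-)\ge 0$ by the triangle inequality and $e$ is $2$-Lipschitz since each $b_\pm$ is $1$-Lipschitz. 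Fix $x\in B(p,R)$. Because $L>2R+1$, we have $d(x,q_\pm)\ge d(p,q_\pm)-d(p,x)>L-R>R$, so $\overline{B(x,R)}$ omits $q_\pm$ and $b_\pm\ge L-2R>1$ there; as $b_\pm$ is a radial function increasing away from $q_\pm$, Proposition~\ref{fLapComp} (read in the barrier sense across the cut loci) gives $\Delta_f b_\pm\le (n+4k-1)\sqrt{H}\coth(\sqrt{H}\,b_\pm)\le (n+4k-1)\sqrt{H}\coth(\sqrt{H}(L-2R))$. Summing,
\[
\Delta_f e \;\le\; 2(n+4k-1)\sqrt{H}\coth\!\big(\sqrt{H}(L-2R)\big)\;=:\;\beta
\]
on $\overline{B(x,R)}$ in the barrier sense; since $\coth s\le 1+s^{-1}$ one has $\beta\le 2(n+4k-1)\big(\sqrt{H}+(L-2R)^{-1}\big)$, so $\beta$ is a function of the form $\Psi(H,L^{-1}\,|\,n,k,R)$, and in particular $\beta\to 0$ as $H\to 0$ and $L^{-1}\to 0$.

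Next, apply the Quantitative Maximal Principle on $B(x,R)$ to $U:=e-\min_{\overline{B(x,R)}}e$. Then $U$ is $2$-Lipschitz, $U\ge 0$ on $\overline{B(x,R)}$ (hence $U|_{\partial B(x,R)}\ge 0$), $U$ vanishes at a minimizer of $e$, and $\Delta_f U=\Delta_f e\le\beta$ in the barrier sense; moreover $p\in B(x,R)$ and $e(p)\le\epsilon$ by \eqref{hyp2}, so $\min_{\overline{B(x,R)}}e\le\epsilon$. Proposition~\ref{QuantMaxP} (with Lipschitz constant $2$, barrier bound $\beta$, and model dimension $n+4k$) then gives, for every $0<c<R$,
\[
e(x)\;\le\;\min_{\overline{B(x,R)}}e+2c+\beta\,G_R(c)\;\le\;\epsilon+2c+\beta\,G_R(c),
\]
where $G_R$ is the model comparison function of Proposition~\ref{QuantMaxP}. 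Since $G_R$ is a fixed decreasing function on $(0,R)$ (with $G_R(c)=O(c^{2-(n+4k)})$ as $c\to 0$ when $n+4k>2$, and milder growth otherwise), choosing $c=c(\beta)\to 0$ slowly enough — for instance $c=\beta^{1/(n+4k)}$ — makes $2c+\beta\,G_R(c)=:\Psi_1(\beta\,|\,n,k,R)$ tend to $0$ as $\beta\to 0$. Combining the last two displays yields $e(x)\le\epsilon+\Psi_1\big(\beta(H,L^{-1}\,|\,n,k,R)\,|\,n,k,R\big)=:\Psi(H,L^{-1},\epsilon\,|\,n,k,R)$, uniformly in $x\in B(p,R)$, which is the claim.

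The step needing the most care is the application of Proposition~\ref{QuantMaxP}: one must check that the argument of \cite[Theorem~9.1]{Ch} carries over with the conformally changed measure $e^{-f}\dvol_g$ and the inflated model dimension $n+4k$ (this is the role of Propositions~\ref{fLapComp} and~\ref{QuantMaxP}), and that $U$ meets the hypotheses of the Quantitative Maximal Principle, in particular that its zero set meets the \emph{open} ball $B(x,R)$. If the minimum of $e$ on $\overline{B(x,R)}$ is attained only on $\partial B(x,R)$, one works instead on a slightly larger admissible ball — possible since $L>2R+1$ leaves room below $d(x,q_\pm)$ — exactly as in the Riemannian treatment. Verifying the barrier-sense inequalities at the cut loci and carrying out the optimization in $c$ is then routine.
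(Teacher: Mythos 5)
Your road map matches what the paper intends: it gives no proof of Proposition~\ref{AGIneq} beyond the sentence that it ``runs parallel to'' Cheeger's Theorem~9.1 with Proposition~\ref{fLapComp} and Proposition~\ref{QuantMaxP} substituted for their Riemannian counterparts, and that is precisely your strategy (including the $n \mapsto n+4k$ inflation and the conformally weighted measure). The derivation of the barrier bound $\Delta_f e \le \beta = 2(n+4k-1)\sqrt{H}\coth(\sqrt{H}(L-2R))$ and the optimization in $c$ are fine.

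There is, however, a genuine gap in the step you flag yourself, and the fix you propose does not close it. You apply Proposition~\ref{QuantMaxP} to $U := e - \min_{\overline{B(x,R)}} e$ with $y_0$ a minimizer of $e$. But Proposition~\ref{QuantMaxP} requires $y_0$ to lie in the \emph{open} ball: its proof needs $V(y_0) = \beta\,G_R\big(d(x,y_0)\big) > 0$, which fails when $d(x,y_0) = R$ (there $G_R = 0$, so $V(y_0) = 0$, and the maximum of $V$ on $\overline{A(x,c,R)}$ may be attained only at $y_0$, so one cannot produce $y' \in \partial B(x,c)$ with $V(y') \ge 0$). Enlarging the ball to $B(x,R')$ with $R < R' < L - R$ does not resolve this, since the minimizer of $e$ over $\overline{B(x,R')}$ can again lie only on $\partial B(x,R')$ (this happens whenever $r \mapsto \min_{\overline{B(x,r)}} e$ is strictly decreasing on $[R,R']$, which is not excluded). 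A clean fix is to base the argument at $y_0 = p$, which \emph{is} in the open ball since $d(x,p) < R$, and to absorb the nonzero value $e(p) \le \epsilon$ into the comparison function rather than into the Lipschitz constant. Concretely, on an admissible $B(x,R')$ with $R' = R + \tfrac12$, set
\[
V := \Big(\beta + \tfrac{\epsilon}{G_{R'}(d(x,p))}\Big)\,G_{R'} - e .
\]
Then $\Delta_f V \ge \big(\beta + \epsilon/G_{R'}(d(x,p))\big)\Delta_f G_{R'} - \Delta_f e \ge \epsilon/G_{R'}(d(x,p)) \ge 0$ by Proposition~\ref{fLapComp}, $V|_{\partial B(x,R')} \le 0$, and $V(p) \ge \beta\,G_{R'}(d(x,p)) > 0$, so the maximum-principle argument of Proposition~\ref{QuantMaxP} runs verbatim and yields, since $d(x,p) < R$ forces $G_{R'}(d(x,p)) > G_{R'}(R)$,
\[
e(x) \le 2c + \Big(\beta + \tfrac{\epsilon}{G_{R'}(R)}\Big) G_{R'}(c) , \qquad 0 < c < d(x,p),
\]
(and $e(x) \le \epsilon + 2c$ trivially when $d(x,p) \le c$). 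Optimizing over $c$ then gives a bona fide $\Psi(H, L^{-1}, \epsilon \mid n, k, R)$. This is the standard way the $\epsilon$ in $e(p) \le \epsilon$ is handled; the version you attempted, which treats $\epsilon$ only as a shift of $U$ and insists on $U(y_0) = 0$, is the source of the difficulty.
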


We note that an excess estimate for smooth metric measure spaces with $\Ric_f \geq 0$ and $|f| \leq k$ is given in  \cite[Theorem 6.1]{WW}.

For fixed $p, q_+, q_- \in M$, define the function $b_\pm: M \to \R$ by
$$b_\pm(x) = d(x, q_\pm) - d(p, q_\pm).$$
Let  $\mathbf{b}_\pm: M \to \R$ be the function such that 
\begin{equation}\label{b}
\Delta_f \mathbf{b}_\pm = 0\hspace{5 mm} \text{ and }\hspace{5 mm}b_\pm\big|_{\partial B(p,R)} = \mathbf{b}_\pm \big|_{\partial B(p,R)}.
\end{equation}
  The Abresch-Gromoll Inequality \ref{AGIneq} along with the $f$-Laplacian Comparison \ref{fLapComp} and the Maximal Principle \ref{QuantMaxP}, allow one to follow the proof of \cite[Lemma 6.15]{CC} to obtain the following.

\begin{lem}\label{PointwiseClose}   Given $R > 0$, $L > 2R + 1$ and $\epsilon > 0$, for any $p, q_+, q_- \in M^n$, if \eqref{hyp1} - \eqref{hyp2} hold
then on $B(p, R)$, 
\begin{equation}\label{Pointwise}
 |b_{\pm} - \mathbf{b}_{\pm} | \leq \Psi(H,L^{-1}, \epsilon|n,k,R).
 \end{equation}
\end{lem}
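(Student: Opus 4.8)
The plan is to reproduce the argument of \cite[Lemma 6.15]{CC} with the Riemannian ingredients replaced by their smooth metric measure space analogues: the $f$-Laplacian comparison (Proposition \ref{fLapComp}), the Abresch--Gromoll inequality (Proposition \ref{AGIneq}), and the comparison function $G_R$ of the Quantitative Maximal Principle (Proposition \ref{QuantMaxP}). Throughout, $\Psi$ denotes a quantity of the form \eqref{Psi} with small parameters $H, L^{-1}, \epsilon$ and fixed constants $n,k,R$, possibly different at each occurrence. Since $|b_\pm - \mathbf{b}_\pm| \le \Psi$ on $B(p,R)$ will follow once I have shown $0 \le b_\pm - \mathbf{b}_\pm \le \Psi$ on $B(p,R)$, I would prove these two one-sided bounds in turn.

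First I would establish the lower bound $b_\pm - \mathbf{b}_\pm \ge 0$. For every $x \in B(p,R)$ one has $d(x,q_\pm) \ge d(p,q_\pm) - R \ge L - R > R+1$, so near such $x$ the function $b_\pm$ is away from $q_\pm$ and, using support functions at cut points of $d(\,\cdot\,,q_\pm)$, Proposition \ref{fLapComp} (with the curvature bound $-(n-1)H$) gives in the barrier sense on $B(p,R)$
$$\Delta_f b_\pm \;\le\; (n+4k-1)\sqrt{H}\,\coth\!\big(\sqrt{H}\,(L-R)\big) \;\le\; (n+4k-1)\Big(\sqrt{H}+\tfrac{1}{L-R}\Big) \;=:\; \Psi_1 ,$$
which is a $\Psi$. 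Let $G_R$ be the model comparison function of Proposition \ref{QuantMaxP}, so $G_R \ge 0$, $G_R' < 0$, $G_R(R) = 0$, and $\Delta_f\big(G_R(d(\,\cdot\,,p))\big) \ge 1$ by the $f$-Laplacian comparison. Since $\Delta_f \mathbf{b}_\pm = 0$ and $b_\pm = \mathbf{b}_\pm$ on $\partial B(p,R)$ by \eqref{b}, the function $v := b_\pm - \mathbf{b}_\pm - \Psi_1\, G_R(d(\,\cdot\,,p))$ satisfies $\Delta_f v \le \Psi_1 - \Psi_1 = 0$ in the barrier sense on $B(p,R)$ and vanishes on $\partial B(p,R)$; the minimum principle then gives $b_\pm - \mathbf{b}_\pm \ge \Psi_1\, G_R(d(\,\cdot\,,p)) \ge 0$ on $B(p,R)$.

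Next I would establish the upper bound using the excess function. From \eqref{excess} one has $b_+ + b_- = e - e(p)$; by Proposition \ref{AGIneq} $e \le \Psi$ on $B(p,R)$, while the triangle inequality gives $e \ge 0$ and \eqref{hyp2} gives $e(p) \le \epsilon$, so $-\epsilon \le b_+ + b_- \le \Psi$ on $B(p,R)$. Since $\mathbf{b}_+ + \mathbf{b}_-$ is $f$-harmonic on $B(p,R)$ with boundary values $b_+ + b_- \ge -\epsilon$, the minimum principle yields $\mathbf{b}_+ + \mathbf{b}_- \ge -\epsilon$ on $B(p,R)$. Writing
$$ b_\pm - \mathbf{b}_\pm \;=\; (b_+ + b_-) - (\mathbf{b}_+ + \mathbf{b}_-) - (b_\mp - \mathbf{b}_\mp) $$
and bounding the first term above by $\Psi$, the second by $\epsilon$, and the third by $0$ (using the lower bound just proven), I obtain $b_\pm - \mathbf{b}_\pm \le \Psi$ on $B(p,R)$. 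Combined with $b_\pm - \mathbf{b}_\pm \ge 0$, this is \eqref{Pointwise}.

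I expect the only genuine difficulty to lie in the second paragraph: justifying the barrier-sense inequality $\Delta_f b_\pm \le \Psi_1$ across the cut locus of $d(\,\cdot\,,q_\pm)$, and checking that $G_R(d(\,\cdot\,,p))$ is admissible for the minimum principle at $p$ itself — the latter handled, as in the proof of Proposition \ref{QuantMaxP}, by working on an annulus $A(p,\delta,R)$ and letting $\delta \to 0$, the inner boundary term being harmless since $G_R$ is bounded. Everything else is bookkeeping: confirming that each collected error term depends only on $n,k,R$ and vanishes as $H \to 0$, $L^{-1} \to 0$, and $\epsilon \to 0$.
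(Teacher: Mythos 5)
Your overall architecture matches the paper's pointer to Cheeger--Colding's Lemma 6.15: obtain a one-sided bound on $b_\pm - \mathbf{b}_\pm$ from the $f$-Laplacian comparison, and get the other side from the Abresch--Gromoll excess estimate together with $b_++b_- = e - e(p)$. The algebraic identity in your final display, the bound $-\epsilon \le b_++b_- \le \Psi$, and the use of the minimum principle to propagate $\mathbf{b}_++\mathbf{b}_- \ge -\epsilon$ are all correct. However, the lower-bound step has a genuine gap.

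The function $G_R$ from Proposition~\ref{QuantMaxP} is not bounded on $(0,R]$: since $\Delta_H G_R \equiv 1$ and $G_R' < 0$, writing $(sn_H^{N-1}G_R')' = sn_H^{N-1}$ with $N = n+4k$ forces $sn_H^{N-1}(r)G_R'(r) \to c < 0$ as $r\to 0$, so $G_R'(r) \sim c\, r^{1-N}$ and $G_R(r) \to +\infty$. (It is the torsion analogue of a Green's function, not a bounded barrier.) Consequently $v = b_\pm - \mathbf{b}_\pm - \Psi_1 G_R$ tends to $-\infty$ at $p$, is not continuous on $\overline{B(p,R)}$, and the minimum principle gives nothing: on the annulus $A(p,\delta,R)$ the inner boundary value of $v$ is roughly $-\Psi_1 G_R(\delta) \to -\infty$, so the remark that ``the inner boundary term is harmless since $G_R$ is bounded'' is simply false. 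Independently of the proof, the claimed conclusion $b_\pm - \mathbf{b}_\pm \ge 0$ cannot hold in general: already in flat $\mathbb{R}^n$ with $f\equiv 0$ one has $\Delta b_\pm = (n-1)/d(\cdot,q_\pm) > 0$, so $b_\pm - \mathbf{b}_\pm$ is strictly subharmonic with zero boundary data, hence $< 0$ in the interior. The correct one-sided bound is $b_\pm - \mathbf{b}_\pm \ge -\Psi$, and proving it needs a \emph{bounded} comparison function with $f$-Laplacian $\le -1$ (or an argument via the $f$-Green's function / torsion function and an a priori bound on it), not $G_R$; the small upper bound $\Delta_f b_\pm \le \Psi_1$ by itself does not give nonnegativity, only almost-nonnegativity. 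With $b_\pm - \mathbf{b}_\pm \ge -\Psi$ in hand, your excess argument then yields $b_\pm - \mathbf{b}_\pm \le \Psi$ and the lemma follows, so only the lower bound needs repair.
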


This pointwise estimate, along with the Gradient Estimate, Theorem \ref{GradEst}, and integration by parts then give

\begin{lem}\label{GradL2Close}
 Given $R > 0$, $L > 2R + 1$ and $\epsilon > 0$, for any $p, q+, q- \in M^n$, if \eqref{hyp1} - \eqref{hyp2} hold
\begin{equation}\label{Grad Close}
\dashint_{B(p,R)} |\nabla b_\pm - \nabla \mathbf{b}_\pm|^2 e^{-f} dvol \leq \Psi(H, L^{-1}, \epsilon| n, k, R)
\end{equation}
\end{lem}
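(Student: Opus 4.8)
The plan is to derive the $L^2$-gradient estimate \eqref{Grad Close} from the pointwise estimate \eqref{Pointwise} of Lemma \ref{PointwiseClose} via an integration-by-parts (Caccioppoli-type) argument, exactly mirroring the Riemannian computation in \cite[Lemma 6.15, proof]{CC} but carrying everything through with respect to the measure $e^{-f}\dvol_g$ and the operator $\fLap$. Write $w = b_\pm - \mathbf{b}_\pm$, which by \eqref{b} vanishes on $\partial B(p,R)$ and, by Lemma \ref{PointwiseClose}, satisfies $|w| \le \Psi(H,L^{-1},\epsilon\mid n,k,R)$ on $B(p,R)$. The starting identity is
\begin{equation*}
\fLap (w^2) = 2 w\,\fLap w + 2|\grad w|^2 .
\end{equation*}
Since $\fLap \mathbf{b}_\pm = 0$ and $\fLap b_\pm = \fLap d(\cdot,q_\pm)$ is controlled by the $f$-Laplacian Comparison \ref{fLapComp} (the distance function is radial with $u'\ge 0$, so $\fLap b_\pm \le (n+4k-1)\sqrt H\coth(\sqrt H\, r)$, and in particular is bounded on the annular region of $B(p,R)$ where $r \ge$ some fixed fraction of $R$), we obtain $|\fLap w| \le \fLap b_\pm \le C(n,k,H,R)$ away from the cut locus of $q_\pm$; the cut-locus set has measure zero and the inequality holds in the barrier/distributional sense so it does not obstruct the integration. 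Integrating over $B(p,R)$ and using the self-adjointness of $\fLap$ with respect to $e^{-f}\dvol_g$ together with $w|_{\partial B(p,R)} = 0$ kills the boundary term from $\int \fLap(w^2)$, leaving
\begin{equation*}
2\int_{B(p,R)} |\grad w|^2 e^{-f}\dvol_g = -2\int_{B(p,R)} w\,\fLap w\; e^{-f}\dvol_g \le 2\,\Psi \cdot C(n,k,H,R)\cdot \vol_f(B(p,R)).
\end{equation*}
Dividing by $\vol_f(B(p,R))$ gives $\dashint_{B(p,R)}|\grad w|^2 e^{-f}\dvol_g \le \Psi\cdot C(n,k,H,R)$, and absorbing the constant into a new $\Psi$ yields \eqref{Grad Close}.

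There is a subtlety to address before the integration by parts is legitimate: $\fLap w$ is not a bounded function globally — near $q_\pm$ (if $q_\pm \in B(p,R)$, which it is not since $d(p,q_\pm)\ge L > 2R+1$) or, more relevantly, the estimate $\fLap b_\pm \le (n+4k-1)\sqrt{H}\coth(\sqrt{H} r)$ blows up only as $r\to 0$, i.e. near $q_\pm$, which lies far outside $B(p,R)$; on $B(p,R)$ one has $r = d(\cdot,q_\pm) \ge L - R$, so $\coth(\sqrt H r)$ is uniformly bounded in terms of $H,L,R$. Thus $\fLap b_\pm$ is genuinely bounded on $B(p,R)$ by a constant of the required form. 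The remaining care is that $b_\pm$ is only Lipschitz (not $C^2$) along the cut locus of $q_\pm$; one handles this in the standard way — either by working with $\fLap b_\pm$ as a measure with no positive singular part (superharmonicity of the distance function in the barrier sense, which is what \ref{fLapComp} provides), or by the approximation argument of Cheeger-Colding, replacing $b_\pm$ by smoothings and passing to the limit. Either route is routine once \ref{fLapComp} is in hand.

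I expect the main obstacle — such as it is — to be purely bookkeeping: verifying that every constant produced (from the $\coth$ bound, from $\vol_f(B(p,R))$ via the volume comparison following from \cite[Theorem 1.1]{WW}, and from the Gradient Estimate \ref{GradEst} if one instead bounds $|\grad w|$ pointwise as an alternative route) depends only on the allowed parameters $n,k,H$ (equivalently $n,k$ and $R$, with $H$ and $L^{-1}$ among the quantities sent to zero), and that the $\Psi$ notation of \eqref{Psi} is respected, i.e. the right-hand side tends to $0$ as $H, L^{-1}, \epsilon \to 0$. Since $\Psi$ enters the final bound only multiplicatively against an $\epsilon$-independent constant, this is immediate. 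The one genuine input beyond \cite{CC} is that the Gradient Estimate \ref{GradEst} (needed if one wants the cleaner ``$|\grad b_\pm|$ bounded'' formulation en route, or to control $|\grad \mathbf{b}_\pm|$) applies to $f$-harmonic $\mathbf{b}_\pm$ with no assumption on $f$ and to $b_\pm$ only through its boundary values — but both of these are already established above, so no new difficulty arises.
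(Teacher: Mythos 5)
Your overall strategy---set $w = b_\pm - \mathbf{b}_\pm$, note $w|_{\partial B(p,R)} = 0$, and integrate $\fLap(w^2) = 2w\fLap w + 2|\grad w|^2$ by parts against $e^{-f}\dvol_g$ so that $\int_{B(p,R)}|\grad w|^2\,e^{-f}\dvol_g = -\int_{B(p,R)} w\,\fLap w\,e^{-f}\dvol_g$---is the right shape of argument, and the paper indeed indicates that Lemma~\ref{GradL2Close} follows from the pointwise bound~\eqref{Pointwise} by integration by parts. But there is a concrete gap in the key estimate, and it is worth being precise about why.

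You claim ``$|\fLap w| \le \fLap b_\pm \le C(n,k,H,R)$ away from the cut locus of $q_\pm$; the cut-locus set has measure zero \dots\ so it does not obstruct the integration.'' This is not correct, for two reasons. First, the $f$-Laplacian comparison~\ref{fLapComp} only gives a \emph{one-sided} bound $\fLap b_\pm \le C$; there is no lower bound. Even on the smooth part of $B(p,R)$ (away from the cut locus of $q_\pm$), $\Delta d(\cdot,q_\pm)$ tends to $-\infty$ as one approaches the cut locus, so the absolutely continuous part of $\fLap b_\pm$ is not bounded below, and hence $|\fLap w|$ is not bounded pointwise a.e. Second, the distributional $\fLap b_\pm$ carries a genuine nonpositive singular measure supported on the cut locus; the cut locus having Lebesgue-measure zero does not make this contribution vanish in $\int w\,\fLap w$. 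So the inequality $-\int w\,\fLap w\,e^{-f}\dvol_g \le \Psi\cdot C\cdot \vol_f(B(p,R))$ is not justified by the reasoning given.

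The fix is the one your last paragraph gestures at but does not carry out. Use only the one-sided bound: with $\mu := \fLap b_\pm$ one has $C\,e^{-f}\dvol_g - d\mu \ge 0$ as a measure on $B(p,R)$, so
$$-\int_{B(p,R)} w\,d\mu = \int_{B(p,R)} w\,(C\,e^{-f}\dvol_g - d\mu) - C\int_{B(p,R)} w\,e^{-f}\dvol_g,$$
and then $\bigl|\int w\,(C\,e^{-f}\dvol_g - d\mu)\bigr| \le \Psi\bigl(C\,\vol_f(B(p,R)) - \int_{B(p,R)}d\mu\bigr)$. The total mass $\int_{B(p,R)} d\mu = \int_{\partial B(p,R)}\langle\grad b_\pm,\nu\rangle\,e^{-f}$ is bounded in absolute value by $\vol_f(\partial B(p,R))$ because $|\grad b_\pm| = 1$, and the ratio $\vol_f(\partial B(p,R))/\vol_f(B(p,R))$ is controlled by the $f$-volume/area comparison of~\cite{WW} under $|f|\le k$. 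Dividing by $\vol_f(B(p,R))$ then gives the $\Psi$-bound. This is the step that is actually doing the work, and it is missing from your write-up; replacing $|\fLap w|\le C$ by the one-sided bound plus the mass estimate is not ``routine bookkeeping'' but the substance of the lemma. Note also that the paper's own one-line description invokes the Gradient Estimate~\ref{GradEst} in addition to the pointwise bound and integration by parts; your argument does not use it, and while a no-cutoff version of the argument can indeed avoid it on the nose, you should be aware that the stated route in the paper is via the gradient bound on $\mathbf{b}_\pm$ (e.g.\ to control terms involving $\langle\grad\phi,\grad\mathbf{b}_\pm\rangle$ in a cutoff version, or to justify the integration by parts up to the boundary), so your proof is at minimum a variant rather than a reconstruction of the paper's argument.
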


Lemmas \ref{PointwiseClose} and \ref{GradL2Close} now allow one to obtain the key estimate for $\Hess \mathbf{b}_\pm$. The proof of the Hessian estimate is retained below for completeness.

As mentioned in the introduction, in order to obtain the Hessian estimate, one will multiply by a cutoff function $\phi$.  To construct such a function, one begins with a function $h: A(p, \frac{R}{2}, R) \to \R$ such that $\Delta_f h \equiv 1$, $h|_{\partial B(p, \frac{R}{2})} = G_R(R/2)$, $h|_{\partial B(p, R)} = 0$, where $G_R$ is as specified in Proposition \ref{QuantMaxP}.  Then let $\psi: [0, G_R(R/2)] \to [0,1]$ such that 
$\psi$ is 1 near $G_R(R/2)$ and $\psi$ is 0 near 0.  The function $\phi = \psi(h)$ extended to all of $M$ by setting $\phi = 1$ inside $B(p, R/2)$ and $\phi = 0$ outside of $B(p, R)$, is the cutoff function desired.  The gradient estimate \ref{GradEst2} guarantees that $|\grad \phi|$ and $|\Delta_f \phi|$ are bounded away from the boundary of the annulus on which $h$ was originally defined.  

\begin{proof}[Proof of Theorem \ref{HessEst}]
Applying the Bochner formula \eqref{Bochner} to the $f$-harmonic function $\ob$ yields
$$\frac{1}{2} \Delta_f |\nabla \mathbf{b}_\pm|^2 = | \Hess \ob|^2 + \Ric_f(\nabla \ob, \nabla \ob).$$
Multiply by a cutoff function $\phi$ that has the following properties:
\begin{itemize}
\item  $\phi \big|_{B(p, \frac{R}{2})} \equiv 1$,
\item $supp(\phi) \subset B(p, R)$,
\item $|\nabla \phi| \leq C(n, H, R,k)$,
\item $|\Delta_f \phi| \leq C(n, H, R,k)$.  
\end{itemize}
Then the above equation may be rewritten as
$$\phi | \Hess \ob|^2 = \frac{1}{2} \phi \Delta_f |\nabla \mathbf{b}_\pm|^2 - \phi \Ric_f(\nabla \ob, \nabla \ob)$$
Integrating both sides of this equality over $B(p,R)$ gives
\begin{align*}
\int_{B(p,R)} \phi | \Hess \ob|^2 e^{-f} dvol_g & =\int_{B(p,R)} \left( \frac{1}{2} \phi \Delta_f |\nabla \mathbf{b}_\pm|^2 - \phi \Ric_f(\nabla \ob, \nabla \ob)\right)e^{-f} dvol_g
\\ & \leq \int_{B(p,R)} \left( \frac{1}{2} \phi \Delta_f |\nabla \mathbf{b}_\pm|^2 + (n-1)H|\nabla \ob|^2\right)e^{-f} dvol_g
\\ & =  \frac{1}{2} \int_{B(p,R)} \phi \Delta_f |\nabla \mathbf{b}_\pm|^2 e^{-f} dvol_g 
\\ & \hspace{ 20 mm}+ \int_{B(p,R)}\  (n-1)H|\nabla \ob|^2 e^{-f} dvol_g
\end{align*}
For the first integrand, we have
\begin{align*}
 \int_{B(p,R)} \phi \Delta_f |\nabla \mathbf{b}_\pm|^2 e^{-f} dvol_g  &=  \int_{B(p,R)} \phi \Delta_f( |\nabla \mathbf{b}_\pm|^2-1) e^{-f} dvol_g 
\\ & = \int_{B(p,R)} \Delta_f \phi  (|\nabla \ob|^2 -1)  e^{-f} dvol_g
\end{align*}
Thus 
\begin{align*}
\int_{B(p,R)} \phi | \Hess \ob|^2 e^{-f} dvol_g &\leq \left[ \int_{B(p,R)} \frac{1}{2} \Delta_f \phi (|\nabla \ob|^2-1) 
  +  (n-1)H|\nabla \ob |^2\right] e^{-f} dvol_g
  \\ & \leq \int_{B(p,R)} \left[\frac{1}{2} |\Delta_f \phi| ||\nabla \ob|^2-1|  +  (n-1)H|\nabla \ob |^2 \right] e^{-f} dvol_g
\end{align*}
Since
$$ ||\nabla \ob|^2 - 1| = ||\nabla \ob| - |\nabla b_\pm||(|\nabla \ob| + 1) \leq |\nabla \ob - \nabla b_\pm|(|\nabla \ob|+1),$$
we have 
$$ \dashint_{B(p,R)} \phi | \Hess \ob|^2 e^{-f} dvol_g \leq \Psi.$$
\end{proof}

This Hessian estimate is important because it, together with the Segment Inequality, Proposition \ref{SegmentInequality}, allow us to extend the Quantitative Pythagorean Theorem to the smooth metric measure space setting as follows.

\begin{prop}\label{PythThm}{\bf (Quantitative Pythagorean Theorem)}
 Given $R > 0$, $L > 2R + 1$ and $\epsilon > 0$, for any $p, q+, q- \in M^n$, assume \eqref{hyp1} - \eqref{hyp2} hold.
Let $x, z, w \in B(p, \frac{R}{8})$, with $x \in \mathbf{b}_+^{-1}(a)$, and $z$ a point on $\mathbf{b}_+^{-1}(a)$ closest to $w$.  Then
$|d(x,z)^2 + d(z,w)^2 - d(x,2)^2| \leq \Psi$.
\end{prop}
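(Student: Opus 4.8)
The plan is to reproduce Cheeger and Colding's proof of the Riemannian Pythagorean theorem \cite{CC} (see also \cite{Ch}), carrying out each step with $\Delta$ replaced by $\Delta_f$, all integrals taken against $e^{-f}\dvol_g$, and the classical Laplacian comparison, Hessian estimate, gradient estimate and segment inequality replaced respectively by Propositions \ref{fLapComp}, \ref{SegmentInequality} and Theorems \ref{HessEst}, \ref{GradEst2}. Conceptually the statement asserts that, up to $\Psi$, the $f$-harmonic function $\obp$ is an affine coordinate on $B(p,R/2)$ whose level sets are totally geodesic hypersurfaces, so that each $w\in B(p,R/8)$ is recorded up to $\Psi$ by the pair $(\obp(w),z)$ with $z$ a closest point of $\obp^{-1}(a)$ to $w$; the asserted identity is precisely the statement that this splitting is metrically $\Psi$-orthogonal.

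Concretely I would argue as follows. First, \eqref{Hess} gives $\dashint_{B(p,R/2)}|\Hess\obp|^2\,e^{-f}\dvol_g\le\Psi$, while Lemma \ref{GradL2Close}, together with $|\grad b_\pm|=1$ almost everywhere, the bound $|\grad\obp|\le C(n,H,R,k)$ from Theorem \ref{GradEst2}, and Cauchy--Schwarz, gives $\dashint_{B(p,R/2)}\big||\grad\obp|^2-1\big|\,e^{-f}\dvol_g\le\Psi$. Feeding $h=|\Hess\obp|^2$ and $h=\big||\grad\obp|^2-1\big|$ into the Segment Inequality with $A_1=A_2=B(p,R/8)$ and $W=B(p,R/2)$ (which contains every connecting minimizing geodesic), and applying Chebyshev, one obtains that for all pairs $(y_1,y_2)$ outside a set of $(e^{-f}\dvol_g)^2$-measure $\le\Psi$ the connecting unit-speed geodesic $\gamma$ satisfies $\int_\gamma|\Hess\obp(\gamma',\gamma')|\,ds\le\Psi$ and $\int_\gamma\big||\grad\obp|^2-1\big|\,ds\le\Psi$; since $\tfrac{d}{ds}\langle\grad\obp,\gamma'\rangle=\Hess\obp(\gamma',\gamma')$ along a geodesic, for such $\gamma$ the function $s\mapsto\obp(\gamma(s))$ is $\Psi$-affine and $|\grad\obp|$ is $\Psi$-close to $1$ in $L^1(\gamma)$. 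Next, since $z$ is a closest point of the (generically regular) level set $\obp^{-1}(a)$ to $w$, the first variation formula forces the minimizing geodesic $\sigma$ from $z$ to $w$ to leave $\obp^{-1}(a)$ normally, i.e.\ $\sigma'(0)\parallel\grad\obp(z)$; combining this with the gradient control along $\sigma$ and using a competitor $\obp$-gradient line from $w$ down to level $a$ yields $d(z,w)^2=(\obp(w)-a)^2+\Psi$. Then, because $x$ and $z$ both lie on $\obp^{-1}(a)$, the $\Psi$-affineness of $\obp$ along the minimizing geodesic $\tau$ from $z$ to $x$ forces $\obp\circ\tau$ to stay within $\Psi$ of $a$, and — a function vanishing at both endpoints whose derivative has total variation $\le\Psi$ being everywhere $\le\Psi$ in absolute value — it forces $|\langle\grad\obp,\tau'\rangle|\le\Psi$ along $\tau$, so $\tau$ is $\Psi$-tangent to the level sets throughout. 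These last two steps identify, up to $\Psi$, the geodesic $zw$ with an $\obp$-gradient line and the geodesic $zx$ with a curve inside $\obp^{-1}(a)$, thereby exhibiting a neighbourhood of $z$ in $B(p,R/8)$ as $\Psi$-close to the metric product $\R\times\obp^{-1}(a)$; the Pythagorean identity in that model, with the errors tracked through these identifications, gives $|d(x,z)^2+d(z,w)^2-d(x,w)^2|\le\Psi$. Finally, the genericity restrictions above are lifted by the usual continuity argument: $\obp$ is smooth by elliptic regularity, the constraints cutting out the admissible triples $(x,z,w)$ are closed, good configurations are dense among them, and the function $(x,z,w)\mapsto d(x,z)^2+d(z,w)^2-d(x,w)^2$ is continuous, so the bound passes to the limit.

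The crux — exactly as in the Riemannian proof — is the step that upgrades the near-orthogonality of $zx$ and $zw$, via the approximate product structure, to the \emph{quadratic} identity rather than merely the triangle inequality, and does so while the level set $\obp^{-1}(a)$ is only a near-regular value and the Hessian control is available only along generic geodesics; I would carry out CC's argument there unchanged. The sole genuinely new feature in the smooth metric measure setting is that every comparison constant now involves the effective dimension $n+4k$ in place of $n$ (Propositions \ref{fLapComp} and \ref{SegmentInequality}) and every integral the weight $e^{-f}$; since $k$ is among the fixed parameters and $|f|\le k$, these are absorbed into $\Psi(H,L^{-1},\epsilon\mid n,k,R)$, and — in contrast with \cite{WZ} — no bound on $\grad f$ enters, because Theorems \ref{HessEst} and \ref{GradEst2} already require only $|f|\le k$.
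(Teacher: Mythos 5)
The paper omits the proof of Proposition \ref{PythThm} entirely: the sentence preceding the statement simply notes that the Hessian estimate (Theorem \ref{HessEst}) and the Segment Inequality (Proposition \ref{SegmentInequality}) make the Cheeger--Colding argument go through, implicitly referring the reader to \cite{CC} and \cite{Ch}. Your reconstruction follows exactly that route, correctly identifying the full toolkit (the $L^2$-Hessian bound, the $L^1$ control of $||\grad\obp|^2-1|$ deduced from Lemma \ref{GradL2Close}, the Segment Inequality plus Chebyshev to pass to generic geodesics, first variation at $z$, near-affineness of $\obp\circ\gamma$, and a continuity argument to remove the genericity restrictions), with every dimensional constant replaced by its $(n+4k)$-counterpart and every integral weighted by $e^{-f}$; so your approach matches the paper's intent. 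The one place worth tightening if you wrote this out in full is the application of Theorem \ref{GradEst2} to $\obp$: the gradient estimate as stated requires a \emph{positive} function, so one should apply it to $\obp + C$ for a suitable constant (which is available because $\obp$ is bounded on $B(p,R)$ by the maximum principle together with Lemma \ref{PointwiseClose}), a minor adjustment that does not affect the argument.
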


From this Quantitative Pythagorean Theorem for smooth metric measure spaces,  one may establish the following Almost Splitting Theorem.

\begin{thm}\label{Almost Splitting}{\bf (Almost Splitting Theorem)}
Given $R > 0$, $L > 2R + 1$ and $\epsilon >0$, let $p, q_+, q_- \in M^n$.  If $(M^n, g, e^{-f}\dvol_g)$ satisfies \eqref{hyp1} - \eqref{hyp2}, then there is a length space $X$ such that for some ball $B((0,x), \frac{R}{4}) \subset \R \times X$ with the product metric, we have
$$d_{GH}(B(p, \frac{R}{4}), B((0,x),\frac{R}{4})) \leq \Psi(H, L^{-1}, \epsilon | k, n, R).$$
\end{thm}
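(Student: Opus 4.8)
\medskip
\noindent\textbf{Proof strategy for Theorem \ref{Almost Splitting}.}
The plan is to follow the argument of Cheeger and Colding \cite{CC} (see also \cite{Ch}), now that every analytic ingredient has been transplanted to the smooth metric measure space setting: the $f$-Laplacian comparison (Proposition \ref{fLapComp}), the gradient estimate (Theorem \ref{GradEst2}), the Abresch--Gromoll inequality (Proposition \ref{AGIneq}), the Quantitative Maximal Principle (Proposition \ref{QuantMaxP}), the Segment Inequality (Proposition \ref{SegmentInequality}), Lemmas \ref{PointwiseClose} and \ref{GradL2Close}, the $L^2$-Hessian estimate (Theorem \ref{HessEst}), and, above all, the Quantitative Pythagorean Theorem (Proposition \ref{PythThm}). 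Once Proposition \ref{PythThm} is in hand, the remaining construction of the length space $X$ and of the Gromov--Hausdorff approximation is purely metric-geometric: it no longer refers to the weight $e^{-f}$ or to $\fLap$, so it runs just as in the Riemannian proof.

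First I would reduce to a single function. Set $\mathbf{b} := \obp$. Since $b_+(p) = 0$ and $b_+ + b_- = e - e(p)$ satisfies $|b_+ + b_-| \le \Psi$ on $B(p,R)$ by the Abresch--Gromoll inequality together with \eqref{hyp2}, Lemma \ref{PointwiseClose} yields $|\mathbf{b}(p)| \le \Psi$ and $|\obp + \mathbf{b}_-| \le \Psi$ on $B(p,R)$, so $\mathbf{b}$ carries all the relevant data. By Lemma \ref{GradL2Close} and Theorem \ref{HessEst}, $\big|\,|\grad\mathbf{b}|^2 - 1\,\big|$ and $|\Hess\mathbf{b}|^2$ have $L^2(e^{-f}\dvol_g)$-averages $\le \Psi$ on $B(p,\frac{R}{2})$. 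Feeding these into the Segment Inequality (with model dimension $n+4k$) shows that, apart from an exceptional set of pairs of $(e^{-f}\dvol_g)^2$-measure $\le \Psi \cdot \vol_f(B(p,\frac{R}{4}))^2$, any $y_1,y_2 \in B(p,\frac{R}{8})$ are joined by a minimal geodesic $\gamma$ with $\int_\gamma |\Hess\mathbf{b}| \le \Psi$ and $\int_\gamma \big|\,|\grad\mathbf{b}|^2 - 1\,\big| \le \Psi$; along such a geodesic $s \mapsto \mathbf{b}(\gamma(s))$ is affine of slope $\pm 1$ up to $\Psi$. This is precisely the mechanism behind Proposition \ref{PythThm}, which I take as given.

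Next I would build the approximation. Take $X$ to be a component of the level set $\mathbf{b}^{-1}(\mathbf{b}(p))$ meeting $B(p,\frac{R}{4})$, equipped with a suitable intrinsic length metric, let $x \in X$ be a point nearest $p$, and define $\Phi : B(p,\frac{R}{4}) \to \R \times X$ by $\Phi(y) = \big(\mathbf{b}(y) - \mathbf{b}(p),\, \pi(y)\big)$, where $\pi(y)$ is a point of $\mathbf{b}^{-1}(\mathbf{b}(p))$ closest to $y$. Because $|\grad\mathbf{b}| = 1 + \Psi$, the first coordinate of $\Phi$ is $1$-Lipschitz up to $\Psi$ and $\Psi$-onto $[-\frac{R}{4},\frac{R}{4}]$, and $d_M(y,\pi(y))$ agrees with $|\mathbf{b}(y) - \mathbf{b}(p)|$ up to $\Psi$. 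Applying Proposition \ref{PythThm} to the triples $\big(y,\pi(y),\pi(w)\big)$ then gives
$$\big|\, d_M(y,w) - d_{\R\times X}\big(\Phi(y),\Phi(w)\big)\,\big| \le \Psi \qquad\text{for all } y,w \in B\big(p,\tfrac{R}{4}\big),$$
while a short covering argument shows $\Phi$ is $\Psi$-onto $B\big((0,x),\tfrac{R}{4}\big) \subset \R\times X$; together these say $\GHdist\big(B(p,\tfrac{R}{4}),\, B((0,x),\tfrac{R}{4})\big) \le \Psi(H,L^{-1},\epsilon\mid k,n,R)$.

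The hard part will be making the passage from the almost-everywhere information furnished by the Segment Inequality to the honest pointwise statements needed for $\Phi$ to be a genuine Gromov--Hausdorff map: one must show the exceptional ``bad'' pairs can always be avoided when comparing distances (by perturbing $y,w$ within small balls, whose $f$-volume is controlled from below via the comparison coming from \cite{WW}, so that good substitute points exist) and that the chosen component $X$, with its length metric, really is a length space on the scale $\frac{R}{4}$. A secondary point is bookkeeping: all comparisons are against the model of dimension $n+4k$, which is what produces the stated dependence $\Psi(H,L^{-1},\epsilon\mid k,n,R)$; beyond replacing $\dvol_g$ by $e^{-f}\dvol_g$ and $\Delta$ by $\fLap$ throughout, nothing departs from \cite{CC}.
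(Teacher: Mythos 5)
Your proposal is correct and follows the same route as the paper, which simply invokes the Quantitative Pythagorean Theorem (Proposition \ref{PythThm}) and defers the remaining metric-geometric construction to Cheeger--Colding \cite{CC}. Your write-up usefully spells out the level-set construction, the map $\Phi$, and where the exceptional-set bookkeeping enters, but there is no divergence in strategy from what the paper intends.
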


From this Almost Splitting Theorem for smooth metric measure spaces, it follows that the splitting theorem extends to the limit of a sequence of smooth metric measure spaces is the following manner.

\begin{thm}\label{Splitting}
Let $(M^n_i, g_i, e^{-f_i}dvol_{g_i})$ be a sequence satisfying the following:  $M_i^n \to Y$, $\Ric_{f_i}M_i \geq -(n-1)\delta_i$, where $\delta_i \to 0$, $|f_i| \leq k$.  If $Y$ contains a line, then $Y$ splits as an isometric product, $Y = \R \times X$ for some length space $X$.  
\end{thm}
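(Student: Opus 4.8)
The plan is to derive the splitting from the Almost Splitting Theorem, Theorem~\ref{Almost Splitting}, applied on a family of balls of growing radius about a fixed point of the line, followed by successive Gromov--Hausdorff limits.

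First I would fix a line $\gamma\colon\R\to Y$, set $y_0=\gamma(0)$, and fix a scale $R>0$. Using $M_i\to Y$, choose basepoints $p_i\in M_i$ with $p_i\to y_0$ and, for a parameter $L>2R+1$, endpoints $q_{i,\pm}\in M_i$ with $q_{i,\pm}\to\gamma(\pm(L+1))$. For $i$ large this gives $\min\{d(p_i,q_{i,+}),d(p_i,q_{i,-})\}\ge L$, and the central elementary point is that the excess $e_i(p_i)=d(p_i,q_{i,+})+d(p_i,q_{i,-})-d(q_{i,+},q_{i,-})$ tends to $0$ as $i\to\infty$: the limit equals $(L+1)+(L+1)-d(\gamma(L+1),\gamma(-(L+1)))=0$ since $\gamma|_{[-(L+1),L+1]}$ is a minimizing geodesic through $y_0$, and the excess of a marked triple is continuous under Gromov--Hausdorff convergence. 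Hence, for each fixed $R$ and $L$, the hypotheses \eqref{hyp1}--\eqref{hyp2} of Theorem~\ref{Almost Splitting} hold at $(p_i,q_{i,+},q_{i,-})$ for all large $i$, with $H=\delta_i$ and excess bound $\epsilon=e_i(p_i)$, and the theorem supplies a length space $X_i$ and $x_i\in X_i$ with $d_{GH}\big(B(p_i,R/4),\,B((0,x_i),R/4)\big)\le\Psi(\delta_i,L^{-1},e_i(p_i)\,|\,k,n,R)$.

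Next I would take iterated limits. Keeping $R,L$ fixed and letting $i\to\infty$, the balls $B(p_i,R/4)$ converge to the compact ball $B(y_0,R/4)\subset Y$, so the product balls $B((0,x_i),R/4)$ are uniformly totally bounded and, passing to a subsequence, their fibers $B(x_i,R/4)\subset X_i$ converge to a ball in some length space; since $\delta_i,e_i(p_i)\to0$ the error converges to $\Psi(0,L^{-1},0\,|\,k,n,R)$, which then bounds $d_{GH}$ between $B(y_0,R/4)$ and a ball in a product $\R\times X_{R,L}$. Letting $L\to\infty$, with one further subsequential limit of the fibers, the error goes to $0$, so for every $R>0$ the ball $B(y_0,R/4)$ is isometric to a ball $B((0,x_R),R/4)\subset\R\times X_R$.

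Finally I would globalize. In each product ball the distinguished $\R$-geodesic through $(0,x_R)$ corresponds under the isometry to the arc of $\gamma$ near $y_0$, so all the local product structures single out the same $\R$-direction; equivalently, on each $B(y_0,R/4)$ the splitting coordinate coincides, up to an additive constant, with the restriction of the Busemann function $b^+=\lim_{t\to\infty}\big(d(\cdot,\gamma(t))-t\big)$ of the forward ray, which is defined on all of $Y$, with $b^-=-b^+$. Patching the mutually compatible local isometries then produces a global isometry $Y\cong\R\times X$, where $X=(b^+)^{-1}(0)$ carries its induced length metric; this globalization proceeds exactly as in the Riemannian argument of Cheeger and Colding \cite{CC}. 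I expect this last step to be the main obstacle: one must verify that the product structures obtained at different scales are genuinely compatible --- that they select the same $\R$-factor and induce consistent projections onto $X$ --- and it is precisely the alignment of every local $\R$-factor with the fixed line $\gamma$ through $y_0$ that makes the gluing possible.
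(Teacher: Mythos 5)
Your argument matches what the paper intends: Theorem~\ref{Splitting} is stated as an immediate consequence of Theorem~\ref{Almost Splitting} (the Almost Splitting Theorem), the paper offers no separate proof and simply defers to the standard Cheeger--Colding deduction that you spell out here. Your filling in of the details --- placing $p_i$ and $q_{i,\pm}$ along the line, continuity of the excess under Gromov--Hausdorff convergence, the iterated (or, more safely, diagonal) limit in $i$ and then $L$, and the globalization via the Busemann function exactly as in \cite{CC} --- is the intended and correct route.
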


   \section{Polynomial Growth of the Fundamental Group}
   
   As mentioned in the introduction, the first theorem which we wish to extend to smooth metric measure spaces, Yun's theorem \cite[Main Theorem]{Yun}, is actually a strengthening of Wei's theorem \cite{Wei}. 
   
   \begin{thm}\cite[Theorem 1]{Wei}\label{WeiPoly}
For any constant $v > 0$, there exists $\epsilon = \epsilon(n, v) > 0$ such that if a complete manifold $M^n$ admits a metric satisfying the conditions 
$
\Ric \geq -\epsilon,
$
$
 d(M) = 1,
$ and
$
  \vol(M) \geq v,
$ then the fundamental group of $M$ is of polynomial growth with degree $\leq n$.
   \end{thm}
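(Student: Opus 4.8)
The plan is to reduce the statement to a polynomial bound on the growth function of $\Gamma=\pi_1(M)$, and then to obtain that bound by packing translates of a fundamental domain into metric balls on the universal cover, using the lower volume bound to keep the Bishop--Gromov comparison nearly Euclidean over a long range of scales. First I would note that $\diam(M)=1$ together with completeness forces $M$ to be compact, so $\Gamma$ is finitely generated. Passing to the universal cover $\pi\colon\widetilde M\to M$, on which $\Gamma$ acts freely by isometries with quotient $M$, and fixing $\tilde p$ over $p$, Milnor's lemma gives that the short loops $S=\{\gamma\in\Gamma:d(\tilde p,\gamma\tilde p)\le 2\}$ generate $\Gamma$, and a word of length $\ell$ in $S\cup S^{-1}$ moves $\tilde p$ a distance at most $2\ell$. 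Hence the growth function satisfies $\gamma_\Gamma(\ell)\le N(2\ell)$ with $N(s):=\#\{\gamma\in\Gamma:d(\tilde p,\gamma\tilde p)\le s\}$, so it suffices to show $N(s)\le C(n,v)\,s^n$ for all $s\ge 1$.

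The second step is the volume packing. Let $D\subset\widetilde M$ be the Dirichlet fundamental domain centered at $\tilde p$: its translates $\{\gamma D\}_{\gamma\in\Gamma}$ tile $\widetilde M$ up to a null set, $\mathrm{Vol}(D)=\mathrm{Vol}(M)\ge v$, and $D\subset\overline{B(\tilde p,\diam M)}=\overline{B(\tilde p,1)}$, so that $\gamma D\subset\overline{B(\tilde p,s+1)}$ whenever $d(\tilde p,\gamma\tilde p)\le s$. Disjointness together with Bishop--Gromov on $\widetilde M$ (which still satisfies $\Ric\ge-\epsilon$) then gives
\[
N(s)\,v\ \le\ \sum_{d(\tilde p,\gamma\tilde p)\le s}\mathrm{Vol}(\gamma D)\ \le\ \mathrm{Vol}\bigl(B(\tilde p,s+1)\bigr)\ \le\ V_{-\epsilon}(s+1),
\]
where $V_{-\epsilon}(r)$ denotes the volume of an $r$-ball in the comparison model. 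Since $V_{-\epsilon}(r)\le 2^{n}\omega_n r^n$ for $r\le\epsilon^{-1/2}$, choosing $\epsilon=\epsilon(n,v)$ small enough yields $N(s)\le 2^{n}\omega_n v^{-1}(s+1)^n$ on the range $1\le s\le\epsilon^{-1/2}-1$, a range that becomes as long as we please as $\epsilon\to 0$; taking $s$ of order one also shows $\Gamma$ is generated by at most $C(n,v)$ elements.

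The main obstacle is upgrading this to a bound valid at \emph{every} scale: the Bishop--Gromov upper bound is essentially Euclidean only below the scale $\epsilon^{-1/2}$, and curvature alone cannot prevent faster growth beyond it. My plan for this step is to exploit the cocompactness of the action. Since $D$ has diameter $\le 2$, the same tiling also yields $\mathrm{Vol}(B(\tilde p,s-1))\le N(s)\,\mathrm{Vol}(M)$, and feeding this into the relative volume comparison produces a doubling estimate $N(2s)\le c(n)\,N(s)$ on the range $s\lesssim\epsilon^{-1/2}$, with $c(n)$ only slightly larger than $2^n$ once $\epsilon$ is small. A group generated by boundedly many elements whose growth function is polynomial of degree $\le n$ on an initial range of scales tending to infinity (as $\epsilon\to0$) is then virtually nilpotent by a one-scale (effective) form of Gromov's polynomial growth theorem; a virtually nilpotent group has growth $\asymp s^{d}$ for an integer $d$, and comparison with the controlled range forces $d\le n$. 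Hence $\pi_1(M)$ has polynomial growth of degree $\le n$.

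For the smooth metric measure space version of this theorem one would run exactly the same argument, replacing the Bishop--Gromov inequality by the absolute volume comparison (Proposition~\ref{AbsVolComp}) and taking all integrals against the measure $e^{-f}\dvol_g$, with the diameter and $\mathrm{Vol}_f$-lower bounds playing the roles of $\diam(M)=1$ and $\mathrm{Vol}(M)\ge v$ above.
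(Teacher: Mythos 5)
Your setup --- Milnor's lemma, a Dirichlet domain, packing translates in $\widetilde M$, and Bishop--Gromov --- matches Wei's (and the proof of the paper's analogue, Theorem~\ref{SMMS Wei}), and you correctly identify the real difficulty: the comparison model is only nearly Euclidean out to scale $\epsilon^{-1/2}$, so the packing estimate yields $\gamma(s)\lesssim s^n$ on a long but finite range of scales, not asymptotically. From this point on, your route diverges from the paper's. Wei's resolution does \emph{not} pass through any version of Gromov's polynomial-growth theorem. It uses Anderson's finiteness theorem (extended here as Proposition~\ref{IsomorphismTypes}): under these hypotheses there are only finitely many isomorphism types of $\pi_1(M)$, and one may choose a Gromov-style generating set $g_1,\dots,g_N$ with $d(g_i\tilde x_0,\tilde x_0)\le 3D$ and all relations of the form $g_ig_j=g_k$. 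With that canonical generating set the growth function $\gamma$ depends only on the isomorphism type, not on $\epsilon$; if $\gamma$ failed to be polynomial of degree $\le n$, the witnessing scales $s_i$ with $\gamma(s_i)>is_i^n$ would be determined by one of finitely many abstract groups, and one then shrinks $\epsilon$ below $\epsilon(s_{i_0})$ to contradict the packing bound at that \emph{single} scale $s_{i_0}$. The entire bridge across the ``only finitely many scales'' gap is Anderson's finiteness theorem, which is much more elementary in this setting than any structure theorem for groups of polynomial growth.

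Your route instead invokes a finitary/effective form of Gromov's theorem. This is a genuinely different and much heavier tool (Shalom--Tao and Breuillard--Green--Tao postdate Wei by two decades), and as written the final implication has a gap. For a virtually nilpotent $\Gamma$ one has $\gamma(s)\asymp s^d$, but the implied constants depend on $\Gamma$ and on the chosen generating set; a bound $\gamma(s)\le Cs^n$ on $[1,\epsilon^{-1/2}]$ forces $d\le n$ only if the constant in the lower bound $\gamma(s)\ge cs^d$ can be taken uniform over all groups and generating sets that can occur, which you have not established (and which the geometry does not hand you for free, since $\Gamma$ and its short generating set change as $\epsilon\to 0$). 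The effective bounds on step, rank, index, and number of generators make this plausibly repairable, but the repair would amount to proving a finiteness statement playing exactly the role Anderson's theorem already plays for Wei. So the approach is not wrong in spirit, but it is both heavier than necessary and, as stated, incomplete at the last step.
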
 
   
Yun uses the existence of such $\epsilon$ to construct a contradicting sequence of Riemannian manifolds $M_i$ such that $\Ric(M_i) \geq -\epsilon_i \to 0$, where $\epsilon_i \leq \epsilon$, $\vol(M_i) \geq v$, and $\diam(M_i) \leq D$ but $\pi_1(M_i)$ is not almost abelian.  It is with this sequence that Yun utilizes the Almost Splitting Theorem.  If we wish to generalize his arguments to the smooth metric measure space setting, we should also establish the existence of such an $\epsilon$ for smooth metric measure spaces.  
    
Wei's proof of Theorem \ref{WeiPoly} requires use of the Bishop-Gromov absolute volume comparison.  The relative volume comparison on smooth metric measure spaces formulated  in \cite[Theorem 1.2b]{WW}  only yields a volume growth estimate for $R >1$ since, as noted by Wei and Wylie, the right hand side of 
$$\frac{\vol_f(B(p,R))}{V_H^{n+4k}(R)}\leq \frac{\vol_f(B(p,r))}{V_H^{n+4k}(r)}$$
blows up as $r \to 0$.  Using this type of estimate to extend Wei's proof methods to the smooth metric measure space setting would require additional assumptions.  Moreover, using this comparison will yield polynomial growth of degree $n + 4k$.  In order to improve the degree with only the additional assumption that $|f|\leq k$, we formulate the following volume estimate.

\begin{prop}\label{AbsVolComp}
Let $(M^n, g, e^{-f} dvol_g)$ be a smooth metric measure space with $\Ric_f \geq (n-1)H$, $H < 0$, and $|f| \leq k$.  Then 
$$Vol_f(B(R_1)) \leq k \int_0^{R_2} \mathcal{A}_H(r)e^{2k[\cosh(2\sqrt{-H}r)+1]} dr,$$
where $\mathcal{A}_H(r)dr$ denotes the volume element on the model space with constant curvature $H$.  
\end{prop}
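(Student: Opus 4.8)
The plan is to adapt the Bishop--Gromov volume comparison to the weighted setting: estimate the weighted volume element $\mathcal{A}_f(r,\theta)=e^{-f}\mathcal{A}(r,\theta)$ in geodesic polar coordinates about the center $p$, compare it with the model volume density $\mathcal{A}_H(r)=\operatorname{sn}_H^{\,n-1}(r)$ of the $n$-dimensional space of curvature $H$ (where $\operatorname{sn}_H(r)=\sinh(\sqrt{-H}\,r)/\sqrt{-H}$), and then integrate over the ball. Write $m=\partial_r\log\mathcal{A}$ for the mean curvature of the geodesic sphere, $m_f=\partial_r\log\mathcal{A}_f=m-\partial_r f$ for its $f$-analogue, and $m_H=(n-1)\,\operatorname{sn}_H'/\operatorname{sn}_H$ for the model quantity. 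Since $\mathcal{A}_f(r,\theta)$ is understood to vanish past the cut distance, it is enough to prove the pointwise bound $\mathcal{A}_f(r,\theta)\le \mathcal{A}_H(r)\,e^{2k[\cosh(2\sqrt{-H}\,r)+1]}$ inside the segment domain and integrate it over $S^{n-1}\times[0,R]$.

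First I would establish a weighted mean-curvature comparison for $m_f$ using only $|f|\le k$. From the Riccati inequality $m'\le -m^2/(n-1)-\Ric(\partial_r,\partial_r)$ and the hypothesis $\Ric_f\ge (n-1)H$, which gives $\Ric(\partial_r,\partial_r)\ge (n-1)H-\partial_r^2 f$, a computation with $\operatorname{sn}_H$ as integrating factor, after discarding a nonpositive square term, yields
$$\frac{d}{dr}\bigl(\operatorname{sn}_H^2\,(m-m_H)\bigr)\ \le\ \operatorname{sn}_H^2\,\partial_r^2 f .$$
Integrating from $r=0$, where $\operatorname{sn}_H^2(m-m_H)\to 0$, and then integrating by parts once makes the boundary term $\operatorname{sn}_H^2(r)\,\partial_r f$ cancel against the $\partial_r f$ concealed in $m-m_f$, leaving
$$\operatorname{sn}_H^2(r)\,\bigl(m_f(r)-m_H(r)\bigr)\ \le\ -\int_0^r (\operatorname{sn}_H^2)'(t)\,\partial_t f(\gamma(t))\,dt .$$

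Second I would convert this into a bound on $\log\bigl(\mathcal{A}_f(r,\theta)/\mathcal{A}_H(r)\bigr)=-f(p)+\int_0^r (m_f-m_H)\,ds$. Dividing the last inequality by $\operatorname{sn}_H^2(s)$, integrating over $s\in[0,r]$, and switching the order of integration, the explicit primitive $\int_t^r \operatorname{sn}_H^{-2}=\sqrt{-H}\,[\coth(\sqrt{-H}\,t)-\coth(\sqrt{-H}\,r)]$ together with the identity $\sinh(2u)\coth u=\cosh(2u)+1$ collapses the double integral to $-\int_0^r \partial_t f(\gamma(t))\,g(t)\,dt$ with $g(t)=\cosh(2\sqrt{-H}\,t)+1-\coth(\sqrt{-H}\,r)\,\sinh(2\sqrt{-H}\,t)$. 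One further integration by parts, using $g(r)=0$ and $g(0)=2$, rewrites this as $2f(p)+\int_0^r f(\gamma(t))\,g'(t)\,dt$; with $|f|\le k$ and the triangle-inequality bound $\int_0^r|g'(t)|\,dt\le 2\cosh(2\sqrt{-H}\,r)$ this gives $\int_0^r(m_f-m_H)\,ds\le 2f(p)+2k\cosh(2\sqrt{-H}\,r)$, hence $\log\bigl(\mathcal{A}_f/\mathcal{A}_H\bigr)\le f(p)+2k\cosh(2\sqrt{-H}\,r)\le 2k[\cosh(2\sqrt{-H}\,r)+1]$. Integrating over $S^{n-1}\times[0,R]$ and discarding the cut locus (a null set) produces the asserted volume estimate.

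The step I expect to be the main obstacle is exactly the absence of any pointwise control on $\grad f$ or $\Hess f$: the hypothesis is only $|f|\le k$, so the first Riccati comparison genuinely carries the uncontrolled quantities $\partial_r f$ and $\partial_r^2 f$. The two integrations by parts are the device that disposes of them -- the boundary terms in $\partial_r f$ cancel, and what survives are boundary values of $f$ and a single integral of $f$ itself, to which $|f|\le k$ finally applies. A secondary, routine matter is that the comparison holds only inside the segment domain of $p$; exactly as in the Riemannian case one restricts the $\theta$-integration to minimizing directions, whose complement is null, so that no hypothesis beyond $|f|\le k$ and $\Ric_f\ge(n-1)H$ is needed.
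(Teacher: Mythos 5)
Your argument is correct and follows essentially the same route as the paper: both rest on the weighted mean-curvature inequality (2.17) from Wei--Wylie's Theorem 1.1 (your inequality for $sn_H^2(m_f-m_H)$ is precisely that estimate after one integration by parts), divide by $sn_H^2$, integrate in $r$, use $|f|\le k$ to bound the surviving $f$-terms, and then integrate the resulting density comparison over the ball. The only material difference is organizational: the paper keeps both endpoints $r_1<r_2$, integrates the ratio bound over $r_1\in[0,R_1]$ and $r_2\in[0,R_2]$, and sends $R_1\to 0$ at the very end, whereas you set $r_1=0$ from the outset to obtain the pointwise bound $\mathcal{A}_f(r,\theta)\le\mathcal{A}_H(r)\,e^{2k[\cosh(2\sqrt{-H}\,r)+1]}$ and integrate once --- a streamlined path to the same estimate.
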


\begin{proof}

Let $sn_H(r)$ be the solution to $sn_H'' + H sn_H = 0$
such that $sn_H(0) = 0$ and $sn'_H(0)= 1$.  When $H < 0$, this solution is given by 
\begin{equation}\label{soln}
\frac{1}{\sqrt{-H}} \sinh \sqrt{-H}r.
\end{equation}  
From the proof \cite[Theorem 1.1]{WW}, inequality (2.17) gives
\begin{equation}\label{2.17}
sn_H^2(r)m_f(r) \leq sn_H^2(r)m_H(r) - f(r) (sn_H^2(r))' + \int_0^r f(t)(sn_H^2)''(t)dt.
\end{equation}
Then integrating both sides of \eqref{2.17} from $r=r_1$ to $r_2$ gives
\begin{align*}
\int_{r_1}^{r_2}m_f(r)dr &\leq \int_{r_1}^{r_2}m_H(r)dr - \int_{r_1}^{r_2}f(r) \frac{(sn_H^2(r))'}{sn_H^2(r)}dr + \int_{r_1}^{r_2}\frac{1}{sn_H^2(r)}\left\{ \int_0^r f(t)(sn_H^2)''(t)dt\right\}dr
\\ & = \int_{r_1}^{r_2} m_H(r)dr -  2\sqrt{-H}\int_{r_1}^{r_2}f(r) \coth \sqrt{-H}r dr 
\\ & +2(-H)\int_{r_1}^{r_2}  \csch^2\sqrt{-H}r \left\{ \int_0^r f(t) [\sinh^2 \sqrt{-H} t + \cosh^2 \sqrt{-H}t]dt\right\} dr
\\ & = \int_{r_1}^{r_2} m_H(r)dr -  2\sqrt{-H}\int_{r_1}^{r_2}f(r) \coth \sqrt{-H}r dr 
\\ & +2(-H)\int_{r_1}^{r_2}  \csch^2\sqrt{-H}r \left\{ \int_0^r f(t) \cosh 2\sqrt{-H}t dt\right\} dr
\\& = \int_{r_1}^{r_2} m_H(r)dr -  2\sqrt{-H}\int_{r_1}^{r_2}f(r) \coth \sqrt{-H}r dr 
\\&+ 2(-H)\left[-\frac{\coth \sqrt{-H}r}{\sqrt{-H}}\int_0^r f(t) \cosh 2\sqrt{-H}t dt \right]_{r_1}^{r_2} 
 \\&+  4(-H)\int_{r_1}^{r_2} \frac{\coth \sqrt{-H}r}{\sqrt{-H}}f(r) \sinh^2\sqrt{-H}rdr 
 + 2(-H)\int_{r_1}^{r_2} \frac{\coth \sqrt{-H}r}{\sqrt{-H}}f(r) dr
 \\ &\leq  \int_{r_1}^{r_2} m_H(r)dr + k \coth \sqrt{-H}r_2 \sinh 2\sqrt{-H}r_2 + k \coth \sqrt{-H}r_1\sinh 2\sqrt{-H}r_1 
 \\ &+ 2k[\sinh^2\sqrt{-H}r_2 - \sinh^2\sqrt{-H}r_1]
\\ &  =  \int_{r_1}^{r_2} m_H(r)dr + 2k[\cosh(2\sqrt{-H} r_2) + 1].
\end{align*}
where the first equality is obtained by substituting \eqref{soln} for $sn_H$, and the third equality is obtained through integration by parts.  

Using exponential polar coordinates around $p$, we may write the volume element of $M$ as $\mathcal A(r, \theta) \wedge d\theta_{n-1}$ where $d\theta_{n-1}$ is the standard volume element of the unit sphere $\mathbb{S}^{n-1}$.  Let $\mathcal{A}_f (r, \theta)= e^{-f} \mathcal{A}(r, \theta)$ and $\mathcal{A}_H(r)$ denotes the volume element for the model space with constant curvature $H$.  The mean curvatures on the smooth metric measure space and on the model space can be written, respectively, as
$$m_f(r) = (\ln(\mathcal A_f(r,\theta))' \hspace{5 mm}\text{and} \hspace{5 mm} m_H(r) = (\ln(\mathcal A_H(r))'.$$
Then we may rewrite the above inequality as
$$\ln\bigg(\frac{\mathcal A_f(r_2,\theta)}{\mathcal A_f(r_1,\theta)}\bigg) \leq \ln\bigg(\frac{\mathcal A_H(r_2)}{\mathcal A_H(r_1)}\bigg) + 2k[\cosh(2\sqrt{-H} r_2) + 1].$$
Hence
$$\frac{\mathcal A_f(r_2,\theta)}{\mathcal A_f(r_1,\theta)} \leq  \frac{\mathcal A_H(r_2)}{\mathcal A_H(r_1)}e^{2k[\cosh(2\sqrt{-H} r_2) + 1].}$$
Then
$$\mathcal A_f(r_2,\theta){\mathcal A_H(r_1)} \leq  {\mathcal A_H(r_2)}{\mathcal A_f(r_1,\theta)}e^{2k[\cosh(2\sqrt{-H} r_2) + 1]}.$$
Integrating both sides of the inequality over $\mathbb S^{n-1}$ with respect to $\theta$ yields
$$ \mathcal A_H(r_1) \int_{\mathbb S^{n-1}} \mathcal A_f(r_2, \theta)d\theta \leq  \mathcal A_H(r_2) e^{2k[\cosh(2\sqrt{-H} r_2) + 1]}\int_{\mathbb S^{n-1}} \mathcal A_f(r_1, \theta)d\theta.$$
Then we integrate both sides of the inequality with respect to $r_1$ from $r_1 = 0$ to $r_1 = R_1$:
$$ Vol_H(B(R_1)) 
\int_{\mathbb S^{n-1}}\mathcal A_f(r_2, \theta)d\theta \leq Vol_f(B(R_1))  \mathcal A_H(r_2) e^{2k[\cosh(2\sqrt{-H} r_2) + 1]}.$$
Finally, we integrate  both sides of the inequality with respect to $r_2$ from $r_2 = 0$ to $r_2 = R_2$:
$$ Vol_H(B(R_1)) 
Vol_f(B(R_2)) \leq Vol_f(B(R_1)) \int_0^{R_2}  \mathcal A_H(r_2) e^{2k[\cosh(2\sqrt{-H} r_2) + 1]}dr_2,$$
thus yielding a new volume inequality:
$$ \frac{Vol_H(B(R_1)) }{Vol_f(B(R_1)}
\leq 
\frac{\int_0^{R_2}  \mathcal A_H(r_2) e^{2k[\cosh(2\sqrt{-H} r_2) + 1]}dr}{Vol_f(B(p,R_2))}. $$
Note that the left hand side of the inequality tends to $\frac{1}{f(p)}$ as $R_1 \to 0$.  Then
$$Vol_f(B(p, R_2)) \leq f(p) \int_0^{R_2}  \mathcal A_H(r_2) e^{2k[\cosh(2\sqrt{-H} r_2) + 1]}dr_2$$
\end{proof}

Using Prop \ref{AbsVolComp}, we may extend some of the results of Michael Anderson from \cite{And} to the smooth metric measure space setting.  These are also stated in \cite{WW} without proof.

\begin{prop}
\label{Length SMMS}
Let $(M^n, g, e^{-f}dvol_g)$ be a smooth metric measure space with $|f| \leq k$ satisfying the bounds
 $\Ric_f \geq (n-1) H$, $\diam_M \leq D$ and $\vol_f(M) \geq v$.   
If $\gamma$ is a curve in $M$ with $[\gamma]^p \neq 0$ in $\pi_1(M)$ for all $p \leq N= \frac{k}{v} \int_0^{2D} \mathcal{A}_H e^{2k[\cosh(2\sqrt{-H}r)+1]} dr$, then 
$$ l(\gamma) \geq \frac{D}{N}.$$
\end{prop}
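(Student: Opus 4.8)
The plan is to follow Anderson's argument \cite{And} adapted to the weighted setting, using the new absolute volume comparison Proposition~\ref{AbsVolComp} in place of Bishop--Gromov. Pass to the universal cover $\pi\colon \widetilde M \to M$, equipped with the pulled-back metric and potential $\widetilde f = f\circ\pi$, so that $|\widetilde f|\leq k$ and $\Ric_{\widetilde f}\geq (n-1)H$ still hold. The deck group is $G=\pi_1(M)$, acting by $\widetilde f$-measure-preserving isometries. Fix $\widetilde p\in\widetilde M$ over the basepoint, let $g\in G$ be the element represented by $\gamma$, and set $\ell=\ell(\gamma)$; then $d(\widetilde p, g\widetilde p)\leq \ell$. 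The hypothesis $[\gamma]^p\neq 0$ in $\pi_1(M)$ for all $p\leq N$ says exactly that the elements $e,g,g^2,\dots,g^N$ of $G$ are pairwise distinct.

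\medskip\noindent\textbf{Key steps.}
First I would record the diameter consequence: since $\diam M\leq D$, the orbit $G\cdot\widetilde p$ is $D$-dense in $\widetilde M$, hence the balls $\{B(h\widetilde p, D)\}_{h\in G}$ cover $\widetilde M$; in particular $\widetilde M$ is noncompact only if $G$ is infinite, but all we need is the covering. Second, by the triangle inequality $d(\widetilde p, g^j\widetilde p)\leq j\,\ell$, so the $N+1$ distinct points $\widetilde p, g\widetilde p,\dots,g^N\widetilde p$ all lie in $\overline{B(\widetilde p, N\ell)}$. Third, choose a radius $\rho>0$; the balls $B(g^j\widetilde p,\rho)$, $j=0,\dots,N$, are $N+1$ balls centered at distinct orbit points. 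If we take $\rho$ small enough that these balls are disjoint --- which can be arranged provided $2\rho \le \min_{0\le i<j\le N} d(g^i\widetilde p, g^j\widetilde p)$ --- then by $G$-invariance of the measure each has the same $\widetilde f$-volume $\vol_{\widetilde f}(B(\widetilde p,\rho))$, and they all sit inside $B(\widetilde p, N\ell+\rho)$, giving
\begin{equation*}
(N+1)\,\vol_{\widetilde f}\big(B(\widetilde p,\rho)\big)\;\leq\;\vol_{\widetilde f}\big(B(\widetilde p, N\ell+\rho)\big).
\end{equation*}
Fourth, I bound the right side above using Proposition~\ref{AbsVolComp} applied on $\widetilde M$, and I bound the left side below by relating $\vol_{\widetilde f}(B(\widetilde p,\rho))$ downstairs: for $\rho$ smaller than the injectivity-type scale, $\pi$ restricted to $B(\widetilde p,\rho)$ is an isometry onto $B(p,\rho)\subset M$, and iterating the covering/diameter bound one gets $\vol_f(M)\le (\text{number of }G\text{-translates needed})\cdot \vol_{\widetilde f}(B(\widetilde p,\rho))$; more directly, the standard Anderson argument shows $\vol_{\widetilde f}(B(\widetilde p,\rho))\ge \vol_f(M)$ whenever $\rho$ exceeds $2D$ is \emph{not} what we want --- instead we want the complementary direction, so I would instead argue: the balls $B(h\widetilde p, D)$ cover $\widetilde M$ and each maps onto $M$, whence for any $S>0$ the number of translates $h$ with $B(h\widetilde p,D)\cap B(\widetilde p,S)\neq\emptyset$ times $\vol_f(M)$ is at least $\vol_{\widetilde f}(B(\widetilde p,S))$; combined with the reverse packing this is the mechanism that forces $\ell$ to be large. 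The cleanest route, which I would take, is: take $\rho$ just below $D$ (or any fixed convenient scale) and use that $\vol_{\widetilde f}(B(\widetilde p,\rho))\geq c\cdot v$ cannot hold for \emph{too many} disjoint copies inside a ball whose weighted volume is controlled by Proposition~\ref{AbsVolComp}; equating the two bounds and substituting $N=\tfrac{k}{v}\int_0^{2D}\mathcal{A}_H\,e^{2k[\cosh(2\sqrt{-H}r)+1]}\,dr$ yields $N\ell\geq$ (something $\geq D$), i.e. $\ell(\gamma)\geq D/N$.

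\medskip\noindent\textbf{Main obstacle.}
The delicate point is the choice of the radius $\rho$ and making the disjointness of the balls $B(g^j\widetilde p,\rho)$ rigorous without circularity: a priori $d(g^i\widetilde p, g^j\widetilde p)$ could be tiny, which would force $\rho$ tiny and destroy the volume lower bound. The resolution --- and this is exactly where $[\gamma]^p\neq 0$ for all $p\le N$ is used quantitatively together with $\diam M\le D$ --- is Anderson's observation that one should not demand disjointness of $\rho$-balls around all of $g^0\widetilde p,\dots,g^N\widetilde p$, but rather run the packing at the scale $\rho=D$ and use that \emph{either} $\ell\ge D/N$ already (and we are done), \emph{or} all the points $g^j\widetilde p$ lie in $B(\widetilde p,D)$, in which case the $N+1$ distinct deck transformations $g^j$ give $N+1$ disjoint fundamental-domain-sized chunks inside $B(\widetilde p,2D)$, forcing $(N+1)\,v\le (N+1)\vol_f(M)\le \vol_{\widetilde f}(B(\widetilde p,2D))\le k\int_0^{2D}\mathcal{A}_H\,e^{2k[\cosh(2\sqrt{-H}r)+1]}\,dr$, which contradicts the definition of $N$. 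Hence the first alternative must hold and $\ell(\gamma)\ge D/N$. Carrying out this dichotomy carefully, and checking that the weighted volume of a fundamental domain is at least $\vol_f(M)$ (immediate since $\pi$ is measure-preserving and surjective restricted to a fundamental domain), is the heart of the proof.
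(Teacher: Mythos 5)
The paper states Proposition~\ref{Length SMMS} without a written proof, deferring to Anderson's argument adapted via the new absolute volume comparison Proposition~\ref{AbsVolComp}, and your closing paragraph reconstructs precisely that argument: either $\ell(\gamma)\ge D/N$, or else $d(\widetilde p, g^j\widetilde p)\le N\ell<D$ for all $j\le N$, so the $N+1$ distinct translates $g^jF$ of a Dirichlet fundamental domain $F\subset\overline{B(\widetilde p,D)}$ are pairwise disjoint, $\widetilde f$-measure-preserving copies of $M$ contained in $\overline{B(\widetilde p,2D)}$, which forces $(N+1)v\le \vol_{\widetilde f}\big(B(\widetilde p,2D)\big)\le Nv$, a contradiction. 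Your earlier ``Key steps'' experimentation with packing small $\rho$-balls around the orbit points is a false start that you yourself abandon (as you correctly note, those balls could be tiny, killing the volume lower bound); a clean write-up should cut that detour and lead directly with the fundamental-domain dichotomy.
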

\vspace{3 mm}

Proposition \ref{Length SMMS} is used directly in the proof of Theorem \ref{AlmostAbelian}.  It is also used to prove the following extension of another theorem of Anderson.

\begin{prop}\label{IsomorphismTypes}
For the class of manifolds $M^n$ with $\Ric_f \geq (n-1) H$, $\vol_f \geq v$, $\diam_M \leq D$ and $|f| \leq k$, there are only finitely many isomorphism types of $\pi_1(M)$.  
\end{prop}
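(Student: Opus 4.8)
The plan is to mimic Anderson's argument for Riemannian manifolds, using the absolute volume comparison, Proposition \ref{AbsVolComp}, in place of the Bishop--Gromov comparison, and Proposition \ref{Length SMMS} to control the geometry of short loops. Fix $n$, $H$, $v$, $D$, $k$. First I would observe that $N = \frac{k}{v}\int_0^{2D}\mathcal{A}_H\, e^{2k[\cosh(2\sqrt{-H}r)+1]}\,dr$ is a universal constant depending only on these data (here I interpret the ``$k$'' multiplying the integral in Proposition \ref{Length SMMS} as $f(p)\le e^k$; in any case the quantity is bounded in terms of $n,H,D,k,v$). The key consequence of Proposition \ref{Length SMMS} is a uniform bound on the number of \emph{short} generators: by the Gromov ``short generators'' construction, $\pi_1(M)$ is generated by loops $\gamma_1,\gamma_2,\dots$ at a base point $p\in M$ with $l(\gamma_i)\le 2D$ (each $\gamma_i$ chosen of minimal length among elements not in the subgroup generated by the previous ones), and any relator among short generators has length $\le$ some multiple of $D$. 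Proposition \ref{Length SMMS} says a loop $\gamma$ with $[\gamma]^p\neq 0$ for all $p\le N$ has length $\ge D/N$; contrapositively, every short generator $\gamma_i$ has order dividing some $p\le N$ in the relevant quotient, or else $l(\gamma_i)\ge D/N$. This caps both the number of generators and the ``combinatorial complexity'' of $\pi_1(M)$ by a constant $C=C(n,H,D,k,v)$.

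Next I would make the finiteness quantitative. The number of generators is bounded by Theorem \ref{UnifBound} (or directly by a packing argument using Proposition \ref{AbsVolComp}: balls of radius $D/(3N)$ around the distinct elements of an $r$-separated subset of $\pi_1(M)$ acting on the universal cover are disjoint and sit inside a ball of radius $\le 3D$, and Proposition \ref{AbsVolComp} together with $\vol_f(M)\ge v$ bounds the number of such elements). Then, with the number of generators bounded by $C_1$ and each generator of ``length'' $\le 2D$, one sees $\pi_1(M)$ as a quotient of a fixed finitely generated group (a free group $F_{C_1}$) by relations each of bounded word-length $\le C_2 D/(\text{systole lower bound})$; since there are only finitely many such normal subgroups of $F_{C_1}$ of bounded complexity, there are only finitely many possible isomorphism types. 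Concretely: the loops of length $\le 4D$ at $p$, read off in the generators, give a finite presentation whose total size is bounded by $C(n,H,D,k,v)$, and there are only finitely many groups admitting a presentation of bounded size.

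The main obstacle is establishing the uniform lower bound on the systole-type quantity, i.e.\ that a short generator which is nontrivial in the appropriate quotient cannot be arbitrarily short --- this is exactly the content of Proposition \ref{Length SMMS}, so the work is in assembling Gromov's short-generator/short-relator machinery correctly around it and verifying that the word-length of relators is controlled by $D$ alone (not by the unknown injectivity radius). A secondary technical point is that, unlike the Riemannian case where Bishop--Gromov gives a clean packing bound, here one must use the slightly weaker Proposition \ref{AbsVolComp}, whose right-hand side carries the factor $e^{2k[\cosh(2\sqrt{-H}r)+1]}$; one must check this factor stays bounded for $r\le 3D$, which it does since $r$ ranges over a fixed compact interval, so the packing argument goes through with constants depending on $n,H,D,k,v$ as claimed.
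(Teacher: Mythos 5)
The paper states Proposition \ref{IsomorphismTypes} without proof, attributing it to the extension of Anderson's results (and noting it was also stated without proof in \cite{WW}), so there is no internal proof to compare against; I will assess your argument on its own merits. Your overall strategy --- obtain a generating set of boundedly many generators with relations of bounded word length \`a la Gromov, and conclude finiteness of presentation types --- is indeed the Anderson route and is the right one. However, two steps need repair.

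First, the appeal to Theorem \ref{UnifBound} is a forward reference: that theorem is proved later in the paper via the Almost Splitting Theorem and the Kapovitch--Wilking machinery, which is far heavier than what Proposition \ref{IsomorphismTypes} should require, and its proof sits logically downstream of this section. You do hedge with ``or directly by a packing argument,'' which is the right instinct, but the packing argument you sketch has a gap.

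Second, the ball-packing around orbit points does not close. You place balls of fixed radius around orbit points $\gamma\tilde p$ of the Gromov short generators and appeal to Proposition \ref{Length SMMS} for the separation; but Proposition \ref{Length SMMS} is \emph{conditional}: it bounds $l(\gamma)$ from below only when $[\gamma]^p\neq 0$ for all $p\le N$, i.e.\ only for elements of order $>N$. Short torsion elements can have arbitrarily small length, so the generators (or differences $\gamma_i^{-1}\gamma_j$) need not be uniformly separated, and the balls can overlap. Your parenthetical ``every short generator has order dividing some $p\le N$ \ldots\ or else $l(\gamma_i)\ge D/N$; this caps the number of generators'' does not follow --- the low-order alternative is exactly the case the packing does not handle. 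The clean argument avoids the systole bound entirely: take the Dirichlet domain $F=\{x\in\widetilde M: d(x,\tilde p)\le d(x,g\tilde p)\ \forall g\}$. Then $F\subseteq B(\tilde p,D)$ by the diameter bound, the translates $\gamma F$ are essentially disjoint, and $\vol_{\tilde f}(\gamma F)=\vol_f(M)\ge v$. For any $\gamma$ with $d(\gamma\tilde p,\tilde p)\le 2D$ one has $\gamma F\subseteq B(\tilde p,3D)$, so the number of such $\gamma$ is at most $\vol_{\tilde f}(B(\tilde p,3D))/v$, which is bounded by Proposition \ref{AbsVolComp} (applied on $\widetilde M$, where the lift of $f$ still satisfies $|\tilde f|\le k$ and $\Ric_{\tilde f}\ge(n-1)H$). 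Combining this uniform bound $N_0$ on the size of the Gromov generating set with the fact that the relators have the form $g_ig_j=g_k$ yields a presentation of uniformly bounded size, and hence only finitely many isomorphism types. Written this way, Proposition \ref{Length SMMS} is not actually needed for this particular proposition, despite the text's remark; the absolute volume comparison and the diameter/volume bounds suffice.

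Your observation about the constant in Proposition \ref{AbsVolComp} (that the ``$f(p)$'' in the conclusion should be read as $e^{f(p)}\le e^k$, hence a constant depending on $k$) is correct and worth flagging; it is needed to make the right-hand side a genuine universal constant.
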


Now, with Proposition \ref{AbsVolComp} and Proposition \ref{IsomorphismTypes}, we may extend Wei's theorem about polynomial growth of the fundamental group \cite{Wei} to smooth metric measure spaces.

\begin{thm}\label{SMMS Wei}
For any constant $v \geq 0$, there exists $\epsilon = \epsilon(n, v, k, H, D) > 0$ such that if a smooth metric measure space $(M^n, g, e^{-f}dvol_g)$ with $|f| \leq k$ satisfies the conditions \eqref{Ric} - \eqref{vol}, then the fundamental group of $M$ is of polynomial growth of degree $\leq n$.  
\end{thm}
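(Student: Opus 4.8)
The plan is to adapt Wei's proof of Theorem~\ref{WeiPoly} (see \cite{Wei}) to the smooth metric measure space setting, using the tools assembled above in place of their Riemannian counterparts: Proposition~\ref{AbsVolComp} for the Bishop--Gromov absolute volume comparison, Proposition~\ref{IsomorphismTypes} for Anderson's finiteness theorem, Proposition~\ref{Length SMMS} for the classical lower bound on the length of essential loops, and Theorem~\ref{UnifBound} for the uniform bound on the number of generators of $\pi_1$.

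I would argue by contradiction. If the theorem failed, then for some fixed $v>0$ there would be a sequence $(M_i^n,g_i,e^{-f_i}\dvol_{g_i})$ with $|f_i|\le k$, $\Ric_{f_i}\ge -\epsilon_i$ where $\epsilon_i\to 0$ (say $\epsilon_i\le n-1$), $\diam M_i\le D$, and $\textrm{Vol}_{f_i}(M_i)\ge v$, but with $\pi_1(M_i)$ not of polynomial growth of degree $\le n$. By Proposition~\ref{IsomorphismTypes} the $\pi_1(M_i)$ realize only finitely many isomorphism types, so after passing to a subsequence I may assume $\pi_1(M_i)\cong\Gamma$ for a single group $\Gamma$, which is then not of polynomial growth of degree $\le n$.

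Next I would bound the growth of $\Gamma$ by $f$-volumes in the universal cover. Fix $\tilde p_i\in\widetilde{M_i}$, lift $f_i$ to $\tilde f_i$ (so $|\tilde f_i|\le k$ and $\Ric_{\tilde f_i}\ge -\epsilon_i$), and let $\Gamma$ act by deck transformations. Since $\diam M_i\le D$, $\Gamma$ is generated by a set $S_i$ of loops of length $\le 2D$ based at $\pi_i(\tilde p_i)$, with $|S_i|\le C(n,D,k)$ by Theorem~\ref{UnifBound}; each generator displaces $\tilde p_i$ by at most $2D$, so a word of length $s$ in $S_i$ displaces it by at most $2Ds$, whence $g_{\Gamma,S_i}(s)\le\#\{\gamma:d(\tilde p_i,\gamma\tilde p_i)\le 2Ds\}$ for the growth function. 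Taking a fundamental domain contained in $B(\tilde p_i,D)$, whose $\Gamma$-translates tile $\widetilde{M_i}$ with disjoint interiors and each have $\tilde f_i$-volume $\textrm{Vol}_{f_i}(M_i)\ge v$, I obtain
\[
g_{\Gamma,S_i}(s)\ \le\ \frac{1}{v}\,\textrm{Vol}_{\tilde f_i}\!\big(B(\tilde p_i,2Ds+D)\big)\ \le\ \frac{1}{v}\,\Phi_{\epsilon_i}(2Ds+D),
\]
where $\Phi_\epsilon(R)$ denotes the right-hand side of Proposition~\ref{AbsVolComp} with $H=-\epsilon/(n-1)$.

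The key observation is that $\Phi_\epsilon(R)$ tends, as $\epsilon\to 0^{+}$, to a fixed constant multiple of $R^n$, uniformly for $R$ in any bounded interval, since the model area element converges to the Euclidean one and $\cosh(2\sqrt{\epsilon/(n-1)}\,r)\to 1$. Thus, for each fixed $s$, the displayed bound converges as $i\to\infty$ to a fixed polynomial of degree $n$ in $s$. Combining this with the reduction to the single group $\Gamma$ and with Proposition~\ref{Length SMMS} — whose length bound $D/N$ for loops that are essential up to order $N$ lets one pass, on a fixed macroscopic scale, from the displacement $d(\tilde p_i,\gamma\tilde p_i)$ back to the $S_i$-word length of $\gamma$, compensating for the absence of a lower injectivity radius bound — I expect to deduce that $g_{\Gamma,S_i}(s)\le Cs^n$ for all $s$ once $i$ is large, contradicting the choice of $\Gamma$; the required uniform $\epsilon>0$ then exists. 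I expect this last step to be the main obstacle: Proposition~\ref{AbsVolComp} is favourable only up to a radius that deteriorates as $\epsilon_i$ shrinks, so obtaining the polynomial bound at \emph{all} scales (as is needed for polynomial growth of a fixed degree) forces one to use the finiteness of isomorphism types and the uniform generator bound together with the scale-by-scale improvement of the comparison, rather than a single application of it.
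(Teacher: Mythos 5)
Your framework is the right one and matches the paper up to the point you yourself flag as unresolved: contradiction, pass to a single isomorphism type via Proposition~\ref{IsomorphismTypes}, tile the universal cover by a fundamental domain of $f$-volume $\geq v$, and bound the word-counting function by $\frac{1}{v}\,\vol_{f}(B(\tilde p, cs))$ and hence by Proposition~\ref{AbsVolComp}. You also correctly observe that the right-hand side of Proposition~\ref{AbsVolComp} converges to a constant multiple of $R^n$ as $\epsilon\to 0$, but only uniformly on bounded $R$-intervals.

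The genuine gap is exactly the one you name at the end, and it is not resolved by Proposition~\ref{Length SMMS} or Theorem~\ref{UnifBound} (neither is used in the paper's proof, and ``scale-by-scale improvement'' is not the mechanism). You do not need, and cannot get, $g_{\Gamma,S}(s)\leq Cs^n$ for \emph{all} $s$ at fixed small $\epsilon$. The resolution is the observation that once the isomorphism type is fixed and the generators are chosen as in Gromov (displacements $\leq 3D$, relations of length $3$ — a finite choice, by Proposition~\ref{IsomorphismTypes}), the abstract growth function $\gamma(\cdot)$ of $\Gamma$ with respect to those generators is a single function, \emph{independent of $\epsilon$}. Its failure to be polynomial of degree $\leq n$ therefore supplies a single fixed scale $s_0$ with $\gamma(s_0) > i_0\, s_0^{\,n}$ for a suitable $i_0$. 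One then chooses $\epsilon = \epsilon(s_0)$ small enough that the volume bound gives $\gamma(s_0)\leq \frac{2^{3n}e^4 k}{nv}\,s_0^{\,n}$, and sets $i_0$ larger than this constant. The contradiction happens at this \emph{one} scale; no bound at all scales is needed. Since there are only finitely many isomorphism types, only finitely many such $s_0$ can arise, and $\epsilon_0$ can be taken as the minimum over them, which is how one obtains $\epsilon$ depending only on $n,v,k,H,D$. This single-scale reduction is the missing step in your proposal.
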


\begin{proof}
Let us assume by means of contradiction that $\pi_1(M)$ is not of polynomial growth with degree $\leq n$. It follows that for any set of generators of $\pi_1(M)$, we can find real numbers $s_i$ for all $i$, such that 
\begin{equation}\label{contradiction}
\gamma(s_i) > is_i^{n}.
\end{equation}

Choose a base point $\tilde x_0$ in the universal covering $p: \tilde M \to M$, and let $x_0 = p(\tilde x_0)$.
By Proposition \ref{IsomorphismTypes}, there are only finitely many isomorphism types of $\pi_1(M)$.  For each isomorphism type, choose a set of generators $g_1, \dots, g_N$ such that $d(g_i(\tilde x_0), \tilde x_0) \leq 3D$ and every relation is of the form $g_i g_j = g_k$.  Such a set of generators is guaranteed by a theorem of Gromov \cite{Gro}.  By the proof of Proposition \ref{IsomorphismTypes}, we know that $N$ is uniformly bounded.  Having chosen generators in this manner, we are guaranteed that \eqref{contradiction} is independent of $\epsilon$.  
View this set of generators of the fundamental group $\pi_1(M)$ as deck transformations in the isometry group of $\tilde M$.  Let 
$\Gamma(s) = \{ \text{distinct words in } \pi_1(M) \text{ of length } \leq s\},$
and $\gamma(s) = \# \Gamma(s)$.

Now, choose a fundamental domain $F$ of $\pi_1(M)$ containing $\tilde x_0$.  Then 
$$\bigcup_{g \in \Gamma(s)} g(F) \subseteq B(\tilde x_0,D(3s+1) )),$$
which implies 
$$ \gamma(s) \leq \frac{1}{v} \vol_f(B_(\tilde x_0,D(3s+1) )).$$
Then, by Proposition \ref{AbsVolComp}, it follows that 
$$ \gamma(s) \leq \frac{k}{v} \int_0^{D(3s+1)}   \frac{\sinh \sqrt{\epsilon}r}{\epsilon} e^{2k[\cosh(2\sqrt{\epsilon} r) + 1]}dr.$$
For any fixed, sufficiently large $s_0$, there exists $\epsilon_0 = \epsilon(s_0)$ such that for all $\epsilon \leq \epsilon_0$, we have
\begin{equation}\label{contradiction2}
\gamma(s) \leq \frac{2^{3n} e^4 k}{nv} s^n.
\end{equation}
Let $i_0 > \frac{2^{3n} e^4 k}{nv}$ .  Then $\epsilon < \epsilon(s_{i_0})$ together with \eqref{contradiction} and \eqref{contradiction2} yields a contradiction.  
\end{proof}

\section{Proof of Theorem \ref{AlmostAbelian}}

With Theorems \ref{Splitting}, \ref{SMMS Wei}, and Proposition \ref{Length SMMS}, one may generalize the arguments in \cite{Yun} to the smooth metric measure space setting.  For completeness, we retain the proof here.    

   \begin{proof}[Proof of Theorem \ref{AlmostAbelian}]
By Theorem \ref{SMMS Wei}, there exists $\epsilon_0 = \epsilon_0(n, v, k, H, D) > 0$ such that if a smooth metric measure space $(M, g, e^{-f}\dvol_g)$ with $|f|< k$, satisfies \eqref{Ric} - \eqref{vol}, then $\pi_1(M)$ is a finitely generated group of polynomial growth of order $\leq n$.  

Assume Theorem \ref{AlmostAbelian} is not true.  Then there exists a contradicting sequence of smooth metric measure spaces $(M_i, g_i, e^{-f_i} \dvol_{g_i})$ with $|f_i| \leq k$ and
   $$\Ric_{f_i}(M_i) \geq -\epsilon_i \to 0, \hspace{3 mm} \epsilon_i \leq \epsilon_0, \hspace{3 mm} \vol_{f_i}(M_i) \geq v, \hspace{3 mm} \diam(M_i) \leq D,$$
   such that $\pi_1(M_i)$ is not almost abelian for each $i$.  Note, however, $\pi_1(M_i)$ is of polynomial growth for each $i$.
   
   Since $\pi_1(M_i)$ is of polynomial growth, \cite[Lemma 1.3]{Yun} implies it contains a torsion free nilpotent subgroup $\Gamma_i$ of finite index.   Since $\Gamma_i$ has finite index in $\pi_1(M)$, it must be nontrivial.  Furthermore $\Gamma_i$ cannot be almost abelian.  
   
   Consider the action $\Gamma_i$ on the universal cover $\widetilde M_i$.  For $p_i \in \widetilde M_i$ consider the sequence $(\widetilde M_i, \Gamma_i, p_i)$.  There exists a length space $(Y, q)$ and a closed subgroup $G$ of $Isom(Y)$ such that $(\widetilde M_i, \Gamma_i, p_i)$ subconverges to a triple $(Y, G, q)$ with respect to the pointed equivariant Gromov Hausdorff distance \cite[Theorem 3.6]{FY}.
  
Using the Almost Splitting Theorem \ref{Splitting}, we know $Y$ splits as an isometric product $Y = \R^k \times Y_0$ for some $k$ and length space $Y_0$ which contains no lines.  By Proposition \ref{IsomorphismTypes} it follows  $[\pi_1(M_i): \Gamma_i]$ is uniformly bounded, say   $[\pi_1(M_i): \Gamma_i]\leq m$.  Hence $\diam(\widetilde M_i/\Gamma_i) \leq Dm$.  Then since $(\widetilde{M_i}, \gamma_i, p_i) \to (\R^k \times Y_0, G, q)$, it follows that $\diam(\R^k \times Y_0/G) \leq Dm$. Then $Y_0$ must be compact.  Otherwise, it would contain a line. 
Thus we may consider the projection
$$ \phi: G \to Isom(\R^k).$$
  By \cite[Theorem 6.1]{FY}, for every $\delta > 0$ there exists a normal subgroup $G_{\delta}$ of $G$ such that
$G/ G_{\delta}$ contains a finite index, free abelian group of rank not greater than $\dim(\R^k/ \phi(G))$.  Since $\Gamma_i$ is torsion free, Proposition \ref{Length SMMS} gives that for all nontrivial $\gamma \in \Gamma_i$,   we have $l(\gamma) \geq \frac{D}{N}$ where $N =  \frac{k}{v} \int_0^{2D} \mathcal{A}_{\epsilon_0} e^{2k[\cosh(2\sqrt{\epsilon_0}r)+1]} dr$.  Choose $\delta = \frac{D}{N}$ and set $\delta_0 = \delta/2$.  

 Define
    $$\Gamma_i (\delta) = \{\gamma \in \Gamma_i: d(p_i, \gamma(p_i)) < \delta \}.$$
Similarly, define
     $$G(\delta) = \{ \gamma \in G: d(q, \gamma(q)) < \delta\}.$$
Then 
  $$\Gamma_i(\delta) = \{1\}.$$
Since $(\widetilde{M_i}, \gamma_i, p_i) \to (\R^k \times Y_0, G, q)$, it follows that 
$$G(\delta_0)= \{1\}.$$

Let $K$ denote the kernel of $\phi$.  Since $\delta_0 > 0$ was chosen so that $G(\delta_0) = \{1\}$, it follows that 
$$ \{\gamma \in K | d(\gamma(x), x) < \delta_0 \text{ for all } x \in Y\} = \{1\}.$$
Thus the subgroup generated by this set is trivial.  That is, 
$$K_{\delta_0} = \langle \{\gamma \in K | d(\gamma(x), x) < \delta \text{ for all } x \in Y\}\rangle = \{1\}.$$
Then, the quotient map
$$ \pi: G \to G/{K_{\delta_0}}$$
is simply the identity map.  
The subgroup $G_{\delta_0}$ of $G$ which has the properties we seek is defined by
$$G_{\delta_0} = \pi^{-1}([1]),$$
where $[1]$ denotes the coset containing the identity element of $G/K_{\delta_0}$.
But since $K_{\delta_0}$ is trivial and $\pi$ is the identity map, it follows that $G_{\delta_0} = \{1\}$.
Thus by \cite[Lemma 6.1]{FY}, $G/G_{\delta_0} = G$ contains a finite index free abelian subgroup of rank $\leq k$; that is, $G$ is almost abelian.  
Moreover,  \cite[Theorem 3.10]{FY}
we have that $\Gamma_i$ is isomorphic to $G$ for $i$ sufficiently large.  But this contradicts the fact that $\Gamma_i$ is not almost abelian for each $i$.
\end{proof}

\section{Bound on number of generators of the fundamental group}

In order to obtain a uniform bound on the number of generators of the fundamental group, Kapovitch and Wilking require two results closely related to the Almost Splitting Theorem.  The first of these results is due to Cheeger and Colding \cite{CC2}, see also \cite{CC}. 

\begin{thm}\label{9.29}  Given $R >0$ and $L > 2R + 1$,
let $\Ric_{M^n} \geq -(n-1)\delta$ and $d_{GH} (B(p, L), B(0,L)) \leq \delta$, where $B(0,L) \subset \R^n$.  Then there exist harmonic functions $\mathbf{b}_1, \dots, \mathbf{b}_n$ on $B(p, R)$ such that in the Gromov-Hausdorff sense
$d(e_i, \mathbf{b}_i) \leq \Psi,$
where $\{e_i\}$ denote the standard coordinate functions on $\R^n$ and
\begin{equation}\label{IntIneq}
\dashint_{B(p,R)} \sum_i |\nabla \mathbf{b}_i - 1|^2 + \sum_{i \neq j} |\langle \nabla \mathbf{b}_i, \nabla \mathbf{b}_j\rangle| + \sum_i |\Hess \mathbf{b}_i|^2 \leq \Psi
\end{equation}
\end{thm}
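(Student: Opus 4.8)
The plan is to reduce everything to the $f\equiv 0$ (equivalently $k=0$) specialization of the machinery assembled in Section~2, applied once for each of $n$ nearly orthogonal directions and once more for each of the $\binom{n}{2}$ diagonal directions $e_i+e_j$. First I would use the Gromov--Hausdorff $\delta$-approximation $B(0,L)\to B(p,L)$ to pick $2n$ points $q_i^{\pm}\in M$, $1\le i\le n$, with $q_i^{\pm}$ within $\delta$ of the image of the boundary points $\pm L e_i$ (perturbed slightly inward). Then $\min\{d(p,q_i^+),d(p,q_i^-)\}\ge L-O(\delta)$ and, since in $\R^n$ the triple $-Le_i,\ 0,\ Le_i$ is colinear through the center, the excess $e_i(p)=d(p,q_i^+)+d(p,q_i^-)-d(q_i^+,q_i^-)$ is $\le\Psi(\delta)$. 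Thus for each $i$ the hypotheses \eqref{hyp1}--\eqref{hyp2} hold with $k=0$, $H=\delta$ after replacing $L$ by a slightly smaller value that is still $>2R+1$, so the Hessian estimate Theorem~\ref{HessEst} and Lemmas~\ref{PointwiseClose}, \ref{GradL2Close} are available for the corresponding $b$-functions.

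For each $i$ set $b_{i,\pm}(x)=d(x,q_i^{\pm})-d(p,q_i^{\pm})$ and let $\mathbf{b}_i:=\mathbf{b}_{i,+}$ be the harmonic function on $B(p,R)$ agreeing with $b_{i,+}$ on $\partial B(p,R)$, as in \eqref{b} with $f=0$. Lemma~\ref{PointwiseClose} gives $|b_{i,+}-\mathbf{b}_i|\le\Psi$ on $B(p,R)$; since the $\delta$-approximation carries $b_{i,+}$ to within $\Psi$ of the linear coordinate $e_i$ on $B(0,R)$ (distance functions from far-away points of $\R^n$ are affine near the center up to $O(L^{-1})$), the triangle inequality gives $d(e_i,\mathbf{b}_i)\le\Psi$ in the Gromov--Hausdorff sense. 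Lemma~\ref{GradL2Close} gives $\dashint_{B(p,R)}|\nabla\mathbf{b}_i-\nabla b_{i,+}|^2\le\Psi$, and since $|\nabla b_{i,+}|\equiv 1$ a.e.\ this yields $\dashint_{B(p,R)}\big||\nabla\mathbf{b}_i|-1\big|^2\le\Psi$, controlling the first sum in \eqref{IntIneq}. Applying Theorem~\ref{HessEst} (after enlarging the base ball, which $L>2R+1$ permits, so the estimate runs over $B(p,R)$ rather than $B(p,R/2)$) gives $\dashint_{B(p,R)}|\Hess\mathbf{b}_i|^2\le\Psi$, controlling the third sum.

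The middle sum is the substantive point. For $i\ne j$ I would argue by polarization, $2\langle\nabla\mathbf{b}_i,\nabla\mathbf{b}_j\rangle=|\nabla(\mathbf{b}_i+\mathbf{b}_j)|^2-|\nabla\mathbf{b}_i|^2-|\nabla\mathbf{b}_j|^2$, so it suffices to bound $\dashint_{B(p,R)}\big||\nabla(\mathbf{b}_i+\mathbf{b}_j)|^2-2\big|$. For this, select from the $\delta$-approximation a further pair $q_{ij}^{\pm}$ approximating $\pm L\tfrac{e_i+e_j}{\sqrt2}$, and exactly as above build a harmonic function $\mathbf{b}_{ij}$ on $B(p,R)$ with $\dashint\big||\nabla\mathbf{b}_{ij}|-1\big|^2\le\Psi$ and $\mathbf{b}_{ij}$ within $\Psi$ of $\tfrac{e_i+e_j}{\sqrt2}$ in the Gromov--Hausdorff sense. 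Then $\mathbf{b}_i+\mathbf{b}_j$ and $\sqrt2\,\mathbf{b}_{ij}$ are both harmonic on $B(p,R)$ and both within $\Psi$ of the affine function corresponding to $e_i+e_j$, hence pointwise $\Psi$-close; since their difference is harmonic with small sup norm, a Caccioppoli-type integration by parts (the step underlying Lemma~\ref{GradL2Close}) gives $\dashint_{B(p,R)}|\nabla(\mathbf{b}_i+\mathbf{b}_j)-\sqrt2\,\nabla\mathbf{b}_{ij}|^2\le\Psi$. Because $|\sqrt2\,\nabla\mathbf{b}_{ij}|$ is $L^2$-close to $\sqrt2$, it follows that $\dashint_{B(p,R)}\big||\nabla(\mathbf{b}_i+\mathbf{b}_j)|^2-2\big|\le\Psi$; combining with $\dashint\big||\nabla\mathbf{b}_i|^2-1\big|\le\Psi$ for $i$ and for $j$ and the $L^1$ triangle inequality yields $\dashint_{B(p,R)}|\langle\nabla\mathbf{b}_i,\nabla\mathbf{b}_j\rangle|\le\Psi$. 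Summing over the finitely many indices $i$ and pairs $i\ne j$ gives \eqref{IntIneq}.

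The main obstacle is precisely this last point: passing from Gromov--Hausdorff ($C^0$) closeness of two harmonic functions to $L^2$-closeness of their gradients, which is not formal and is what forces the diagonal-direction construction together with the integration-by-parts mechanism of Lemma~\ref{GradL2Close}. Everything else is a direct transcription of the $f\equiv 0$ versions of the tools of Section~2, and no new phenomena arise; in the stated Riemannian generality the result is that of Cheeger and Colding \cite{CC2} (see also \cite{CC}), so one may alternatively just invoke it.
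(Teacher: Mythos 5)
Your construction of the $\mathbf{b}_i$ and the handling of the first and third sums in \eqref{IntIneq} via Lemmas~\ref{PointwiseClose}, \ref{GradL2Close} and Theorem~\ref{HessEst} track the paper's sketch (the paper takes $q_i = F(Le_i)$ rather than your $\pm L e_i$ pairs, but that is cosmetic). Where you diverge is the cross term. The paper expands $\langle\nabla\mathbf{b}_i,\nabla\mathbf{b}_j\rangle$ by inserting $\pm\nabla b_i$ and $\pm\nabla b_j$, controls the four terms containing $\nabla\mathbf{b}_\bullet-\nabla b_\bullet$ by \eqref{Grad Close} and integration by parts, and then asserts that the remaining term $\langle\nabla b_i,\nabla b_j\rangle$ tends to $0$ as $L\to\infty$; that last assertion is the nontrivial content and is not a pointwise fact (Gromov--Hausdorff closeness does not control angles between geodesics pointwise), so making it precise in the style of Cheeger--Colding requires an integrated angle/excess estimate via the segment inequality. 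You instead polarize, introducing auxiliary harmonic functions $\mathbf{b}_{ij}$ approximating $(e_i+e_j)/\sqrt2$, and reduce the cross term to (i) the already-established gradient and Hessian bounds for the three harmonic functions and (ii) a Caccioppoli estimate $\dashint|\nabla u|^2\le\Psi$ for the harmonic difference $u=(\mathbf{b}_i+\mathbf{b}_j)-\sqrt2\,\mathbf{b}_{ij}$, which has small sup norm by Lemma~\ref{PointwiseClose} and the $C^0$ Gromov--Hausdorff closeness of both sides to the affine function $e_i+e_j$. This is a genuinely different decomposition: it trades the angle-almost-rigidity step, which the paper states but does not develop, for $\binom{n}{2}$ extra instances of the harmonic-replacement construction plus a one-line interior gradient estimate for harmonic functions, and so is arguably the more self-contained route given the tools the paper has assembled. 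One small point to make explicit is that the Caccioppoli step loses a factor of the ball radius, so you must run the whole construction on a ball of radius, say, $2R$ and restrict at the end; you flag this, and $L>2R+1$ leaves room for it.
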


 In the smooth metric measure space setting, a similar statement may be made:

\begin{thm}\label{9.29 SMMS}
Let $\Ric_{f} \geq -(n-1)\delta$, with $|f| \leq k$ and $d_{GH} (B(p,L), B(0,L)) \leq \delta$, where $B_L(0) \subset \R^n$.  Then there exist $f$-harmonic functions $\mathbf{b}_1, \dots, \mathbf{b}_n$ on $B_R(p)$ such that in the Gromov-Hausdorff sense
$d(e_i, \mathbf{b}_i) \leq \Psi,$
where $\{e_i\}$ denote the standard coordinate functions on $\R^n$ and
\begin{equation}\label{IntIneq}
\dashint_{B(p,R)} \left( \sum_i |\nabla \mathbf{b}_i - 1|^2 + \sum_{i \neq j} |\langle \nabla \mathbf{b}_i, \nabla \mathbf{b}_j\rangle| + \sum_i |\Hess \mathbf{b}_i|^2\right)e^{-f}\dvol_g \leq \Psi
\end{equation}
\end{thm}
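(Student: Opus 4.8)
The plan is to imitate, line by line, the proof of Theorem \ref{9.29} in \cite{CC2} (see also \cite{CC}), replacing each Riemannian ingredient by the smooth metric measure space analogue assembled above: the $f$-Laplacian comparison \ref{fLapComp}, the gradient estimate \ref{GradEst2}, the Segment Inequality \ref{SegmentInequality}, the Quantitative Maximal Principle \ref{QuantMaxP} and Abresch--Gromoll Inequality \ref{AGIneq}, the pointwise and $L^2$-gradient estimates \ref{PointwiseClose}--\ref{GradL2Close}, and above all the Hessian estimate \ref{HessEst}. Starting from $d_{GH}(B(p,L),B(0,L)) \leq \delta$ with $B(0,L) \subset \R^n$ and $L > 2R+1$, I would first use the Euclidean structure of $B(0,L)$ to choose $2n$ points $q_i^{\pm} \in M^n$, $i = 1,\dots,n$, lying (up to error $\Psi(\delta\,|\,n,k,R)$) at distance $\geq L$ from $p$ in the directions $\pm e_i$, so that the excess functions $e_i(x) = d(x,q_i^+) + d(x,q_i^-) - d(q_i^+,q_i^-)$ satisfy $e_i(p) \leq \epsilon$ with $\epsilon = \Psi(\delta\,|\,n,k,R)$, and so that the $n$ directions at $p$ are pairwise almost orthogonal and aligned with $e_1,\dots,e_n$. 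For each $i$ set $b_i(x) = d(x,q_i^+) - d(p,q_i^+)$ and let $\mathbf{b}_i := \mathbf{b}_+$ be the $f$-harmonic function produced from the pair $q_i^{\pm}$ as in \eqref{b}.

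With this set-up, the diagonal terms of \eqref{IntIneq} are immediate. By Lemma \ref{PointwiseClose}, $|\mathbf{b}_i - b_i| \leq \Psi$ on $B(p,R)$; since $B(p,L)$ is $\delta$-close to $B(0,L)\subset\R^n$, the map $(b_1,\dots,b_n)$ is a $\Psi$-Gromov--Hausdorff approximation onto a subset of $\R^n$, hence so is $\mathbf{b} = (\mathbf{b}_1,\dots,\mathbf{b}_n)$, which gives $d(e_i,\mathbf{b}_i) \leq \Psi$ in the Gromov--Hausdorff sense. By Lemma \ref{GradL2Close}, $\dashint_{B(p,R)} |\grad\mathbf{b}_i - \grad b_i|^2 e^{-f}\dvol_g \leq \Psi$, and since $|\grad b_i| = 1$ almost everywhere this yields $\dashint_{B(p,R)} \big| |\grad\mathbf{b}_i|^2 - 1 \big|\, e^{-f}\dvol_g \leq \Psi$. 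Finally, applying the Hessian estimate \ref{HessEst} to each pair $q_i^{\pm}$ (starting from a slightly larger ball so that the conclusion holds on $B(p,R)$) gives $\dashint_{B(p,R)} |\Hess\mathbf{b}_i|^2 e^{-f}\dvol_g \leq \Psi$. Summing over $i$ accounts for every term in \eqref{IntIneq} except the cross terms $\sum_{i\neq j} |\langle \grad\mathbf{b}_i, \grad\mathbf{b}_j\rangle|$.

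The near-orthogonality of the gradients is the heart of the matter and is handled exactly as in \cite{CC2}. The plan is to combine the $L^2$-Hessian bounds with the Segment Inequality \ref{SegmentInequality} to show that along $e^{-f}\dvol_g$-almost every minimal geodesic $\gamma$ in $B(p,R/2)$ each composition $\mathbf{b}_i\circ\gamma$ is $\Psi$-close to an affine function of slope $\pm 1 + \Psi$; together with the fact that $\mathbf{b}$ is a $\Psi$-Gromov--Hausdorff approximation, so that $\mathbf{b}$ sends $\gamma$ to a near-segment of length $d(\gamma(0),\gamma(1)) = |\mathbf{b}(\gamma(0)) - \mathbf{b}(\gamma(1))| \pm \Psi$, this forces the differential $d\mathbf{b}_x$ to be $\Psi$-close, in an $e^{-f}$-integrated sense over $x$ and over unit directions, to a linear isometry $T_xM \to \R^n$; polarization then produces $\dashint_{B(p,R)} \sum_{i\neq j} |\langle \grad\mathbf{b}_i, \grad\mathbf{b}_j\rangle|\, e^{-f}\dvol_g \leq \Psi$. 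An equivalent route, closer in spirit to the Pythagorean Theorem \ref{PythThm}, is to also select points $q_{ij}^{\pm}$ realizing small excess in the diagonal directions $e_i \pm e_j$: then $\mathbf{b}_i \pm \mathbf{b}_j$ is $f$-harmonic and, by Lemma \ref{PointwiseClose} and uniqueness of the $f$-harmonic extension up to $\Psi$, is $\Psi$-close to the Busemann-type function attached to $q_{ij}^{\pm}$, whose gradient has $e^{-f}$-average square $\Psi$-close to $|e_i \pm e_j|^2 = 2$; subtracting the two resulting identities for $\dashint |\grad\mathbf{b}_i \pm \grad\mathbf{b}_j|^2$ isolates $\dashint \langle \grad\mathbf{b}_i, \grad\mathbf{b}_j\rangle$, and passing from this signed average to a bound on the average of the absolute value is done as in \cite{CC}.

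I expect the main obstacle to be precisely this cross-term estimate, and underneath it the verification that $\mathbf{b}$ is a genuine $\Psi$-Gromov--Hausdorff approximation: this is the one place where the segment-inequality argument must be run with respect to the weighted measure $e^{-f}\dvol_g$ and with the $(n+4k)$-dimensional comparison constants appearing in \ref{fLapComp} and \ref{SegmentInequality}, and one must check that these constants do not degrade the $\Psi$-bounds. Since $|f| \leq k$ makes all of them depend only on $n, k, H$ and $R$, the Cheeger--Colding bookkeeping goes through with only cosmetic changes. A secondary, purely technical point is the radius loss: Theorem \ref{HessEst} delivers its conclusion on $B(p,R/2)$, so one begins with a larger ball and a larger Gromov--Hausdorff parameter $L$ and relabels at the end, exactly as in the statement of Theorem \ref{9.29}.
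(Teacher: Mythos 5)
Your construction of $\mathbf{b}_i$ and your treatment of the diagonal terms track the paper closely: $q_i$ are images under a $\delta$-Gromov--Hausdorff approximation $F\colon B(0,L)\to B(p,L)$ of points near $\pm Le_i$, $b_i(x)=d(x,q_i)-d(p,q_i)$, and $\mathbf{b}_i$ is the $f$-harmonic function sharing $b_i$'s boundary values; Lemma \ref{PointwiseClose} gives the pointwise GH-closeness $d(e_i,\mathbf{b}_i)\le\Psi$, Lemma \ref{GradL2Close} handles $\sum_i ||\nabla\mathbf{b}_i|^2-1|$, and Theorem \ref{HessEst} handles $\sum_i|\Hess\mathbf{b}_i|^2$. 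Where you and the paper diverge is the cross term $\sum_{i\neq j}|\langle\nabla\mathbf{b}_i,\nabla\mathbf{b}_j\rangle|$.

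For the cross term the paper does not invoke the segment inequality or the linear-isometry picture, nor the auxiliary diagonal points $q_{ij}^{\pm}$. It simply expands
\[
\langle\nabla\mathbf{b}_i,\nabla\mathbf{b}_j\rangle
= \langle\nabla\mathbf{b}_i-\nabla b_i+\nabla b_i,\;\nabla\mathbf{b}_j-\nabla b_j+\nabla b_j\rangle,
\]
bounds every term that contains a factor $\nabla\mathbf{b}_\bullet-\nabla b_\bullet$ by Cauchy--Schwarz together with \eqref{Grad Close} and the gradient bound of Theorem \ref{GradEst2} (plus an integration by parts using $\Delta_f\mathbf{b}_j=0$ to treat the term $\langle\nabla\mathbf{b}_i-\nabla b_i,\nabla\mathbf{b}_j\rangle$), and then observes that the one remaining term $\langle\nabla b_i,\nabla b_j\rangle$ tends to $0$ as $L\to\infty$ because the points $q_i,q_j$ sit (up to $\Psi$) in orthogonal Euclidean directions. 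This is shorter and stays entirely inside the machinery already assembled for the almost-splitting theorem. Your first route imports the heavier Cheeger--Colding argument that $d\mathbf{b}$ is integrally close to a linear isometry; it would work and is actually needed for the stronger conclusion that $\mathbf{b}$ is a pointed $\Psi$-GH approximation onto $\R^n$, which this theorem does not require. Your second route (polarizing via $\mathbf{b}_i\pm\mathbf{b}_j$ built from diagonal points $q_{ij}^{\pm}$) is clean for the signed average $\dashint\langle\nabla\mathbf{b}_i,\nabla\mathbf{b}_j\rangle$, but as you note you must then upgrade to a bound on $\dashint|\langle\nabla\mathbf{b}_i,\nabla\mathbf{b}_j\rangle|$, a non-trivial extra step that the paper's direct bilinear expansion avoids entirely since it bounds each piece in absolute value from the start. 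Both of your routes buy more than the paper proves; they are correct in outline but overshoot what is needed, whereas the paper's expansion is the economical choice.
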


The manner in which the harmonic functions $\mathbf{b}_i$ are constructed for Theorems \ref{9.29} and \ref{9.29 SMMS} is similar to the manner in which the harmonic functions $\mathbf{b}_\pm$ are constructed in the proof of the Almost Splitting Theorem in both the Riemannian and smooth metric measure space settings.  Since the two $L$-balls are $\delta$-close in the Gromov-Hausdorff sense, there exists a $\delta$-Gromov-Hausdorff approximation
$$F: B(0,L) \to B(p, L).$$
For each $i = 1, \dots, n$, set
$$q_i = F(Le_i)$$
and define $b_i: M \to \R$ by
$$ b_i(x) = d(x, q_i) - d(p, q_i).$$
For the smooth metric measure space version, let $\mathbf{b}_i$ be the $f$-harmonic function such that $\mathbf{b}_i \big|_{\partial B(p, L)} = b_i \big|_{\partial B(p, L)}$.  
Integrating each term separately, we see that 
the first term can be controlled by \eqref{Grad Close} and the third term by \eqref{Hess}.  One can show a $\Psi$-upper bound for the middle term of the integrand \eqref{IntIneq} by noting that
\begin{align*}
\langle \nabla \mathbf{b}_i, \nabla \mathbf{b}_j \rangle & = \langle \nabla \mathbf{b}_i - \nabla b_i + \nabla b_i ,  \nabla \mathbf{b}_j  -\nabla b_j  +\nabla b_j \rangle 
\\ & = \langle \nabla \mathbf{b_i} - \nabla b_i, \nabla \mathbf b_j \rangle  + \langle \nabla \mathbf{b_i} - \nabla b_i, \nabla  b_j \rangle + \langle \nabla \mathbf{b}_j - \nabla b_j, \nabla \mathbf b_i\rangle 
\\ & \hspace{20 mm} + \langle \nabla \mathbf{b_j} - \nabla b_j, \nabla b_i \rangle + \langle \nabla b_i,  \nabla b_j \rangle
\end{align*}
Using integration by parts and \eqref{Pointwise}, one can show that the average value of each of the first four terms of the summand is bounded from above by $\Psi$.  Moreover, $\langle \grad \mathbf{b_i}, \grad \mathbf{b_j} \rangle \to 0$ when $L \to \infty$.

The Product Lemma  of Kapovitch and Wilking, stated below, can be viewed as another type of splitting result.

\begin{thm} \label{ProductLemma}\cite[Lemma 2.1]{KW}  Let $M_i$ be a sequence of manifolds with $\Ric_{M_i} > -\epsilon_i \to 0$ satisfying
\begin{itemize}
\item $
\overline{B_{r_i}(p_i)}$ compact for all $r_i \to \infty$, $p_i \in M_i$, 
\item  for all $i$ and $j = 1, \dots, k$ there exist harmonic functions $\mathbf{b}_j^i: B(p_i, r_i) \to \R$  which are $L$-Lipschitz and fulfill 
$$ \dashint_{B(p_i, R)} \left(\sum_{j,l = 1}^k |\langle \nabla \mathbf{b}_j^i, \nabla b_l^i \rangle - \delta_{jl}| + \sum_{j = 1}^k |\Hess \mathbf{b}_j^i|^2\right) d\mu_i \to 0 \hspace{5 mm} \text{ for all } R > 0,$$
\end{itemize}
then the limit is isometric to a product $(\R^k \times X, p_\infty)$. 
\end{thm}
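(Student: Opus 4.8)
The plan is to follow the strategy of Kapovitch and Wilking: we produce on the pointed Gromov--Hausdorff limit an isometric $\R^k$-action whose orbits are flat, totally geodesic copies of $\R^k$, obtained as the limit of the gradient flows of the functions $\mathbf{b}_j^i$, and then read off the product structure. All the analysis is carried out on the $M_i$ and only afterwards passed to the limit. First I would pass to a subsequence so that $(M_i,p_i)$ converges in the pointed Gromov--Hausdorff sense to a length space $(Y,p_\infty)$; this limit exists and is a proper space because of the compactness hypothesis on the balls $\overline{B_{r_i}(p_i)}$. Since each $\mathbf{b}_j^i$ is harmonic (respectively $f$-harmonic), elliptic regularity makes it smooth in the interior, so $\grad \mathbf{b}_j^i$ is a smooth vector field with $|\grad \mathbf{b}_j^i|\le L$; hence its flow $\Phi_{j,t}^i$ is defined for $|t|$ in any fixed bounded range on $B(p_i,R)$ as soon as $r_i\gg R$ (a point of $B(p_i,R)$ stays in $B(p_i,R+L|t|)$ along the flow). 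The $L$-Lipschitz bound also lets the $\mathbf{b}_j^i$ subconverge, uniformly on compact sets, to $L$-Lipschitz functions $\mathbf{b}_j\colon Y\to\R$.

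The heart of the argument is to show that, as $i\to\infty$, each $\Phi_{j,t}^i$ becomes a distance-preserving map which moves every point by exactly $|t|$, and that the flows asymptotically commute. For this I would combine the Bochner formula \eqref{Bochner} with the second-variation computation that bounds $\bigl|\partial_t\, d(\Phi_{j,t}^i(x),\Phi_{j,t}^i(y))\bigr|$ by the integral of $|\Hess \mathbf{b}_j^i|$ along the minimal geodesic joining $x$ and $y$; the Segment Inequality, Proposition~\ref{SegmentInequality}, then converts the $L^2$-smallness $\dashint_{B(p_i,R)}|\Hess\mathbf{b}_j^i|^2\to 0$ into the statement that $\int_0^{d(x,y)}|\Hess\mathbf{b}_j^i|(\gamma(s))\,ds$ is small for a set of pairs $(x,y)$ of almost full measure. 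Hence $d(\Phi_{j,t}^i(x),\Phi_{j,t}^i(y))\to d(x,y)$ for generic pairs and, by equicontinuity, for all pairs in the limit. The relation $d(x,\Phi_{j,t}^i(x))\to|t|$ follows from $\langle\grad\mathbf{b}_j^i,\grad\mathbf{b}_j^i\rangle\to 1$ in $L^1$, and asymptotic commutativity follows because $[\grad\mathbf{b}_j^i,\grad\mathbf{b}_l^i]$ has pointwise norm at most $L\bigl(|\Hess\mathbf{b}_j^i|+|\Hess\mathbf{b}_l^i|\bigr)$, which is $L^2$-small; one more application of the Segment Inequality promotes this to control along generic geodesics, so the commutator of the two flows converges to the identity.

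With these estimates I would then define, for $v=(v_1,\dots,v_k)\in\R^k$, the limit map $\Psi_v=\lim_i \Phi_{1,v_1}^i\circ\cdots\circ\Phi_{k,v_k}^i$ on $Y$. The previous step guarantees this is well defined and that $\{\Psi_v\}$ is an action of $\R^k$ on $Y$ by isometries, while the orthonormality $\langle\grad\mathbf{b}_j^i,\grad\mathbf{b}_l^i\rangle\to\delta_{jl}$ forces the orbit map $v\mapsto\Psi_v(y)$ to be a distance-preserving embedding of $\R^k$ into $Y$ for every $y$; in particular the orbits are flat, totally geodesic copies of $\R^k$, and the orbit of $p_\infty$ along each coordinate $\R$ is a line. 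Using the orthogonality relations one can then iterate the splitting theorem (the Riemannian version, or Theorem~\ref{Splitting} in the smooth metric measure space case) — or argue directly as in \cite{KW} with the map $(\mathbf{b}_1,\dots,\mathbf{b}_k)\colon Y\to\R^k$ — to obtain a global isometric splitting $Y\cong\R^k\times X$ with $X=Y/\R^k$ carrying the product metric; the splitting is global precisely because $r_i\to\infty$.

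I expect the main obstacle to be the second paragraph: turning the averaged Hessian bound into genuine pointwise control of how the gradient flows distort the metric and fail to commute. This is exactly the point where the Segment Inequality, Proposition~\ref{SegmentInequality}, and the Bochner formula \eqref{Bochner} are indispensable. For the smooth metric measure space version needed in Theorem~\ref{UnifBound}, the same argument applies after replacing harmonic functions by $f$-harmonic ones, the Laplace comparison by Proposition~\ref{fLapComp}, and all integrals by their $e^{-f}\dvol_g$-weighted counterparts, the required integral input being supplied by Theorem~\ref{9.29 SMMS}; the curvature term $\Ric_f(\grad\mathbf{b}_j,\grad\mathbf{b}_j)$ appearing in \eqref{Bochner} is absorbed using $\Ric_f\ge -(n-1)\delta_i$ exactly as the Ricci term is in the Riemannian case.
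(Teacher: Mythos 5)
The paper does not prove Theorem~\ref{ProductLemma}: it is quoted verbatim from Kapovitch--Wilking \cite{KW}, and the only proof-related material in the paper is the short paragraph between Theorem~\ref{ProductLemma} and Theorem~\ref{Product lemma SMMS}, which records that the $\mathbf{b}_j^i$ converge to a submetry $\mathbf{b}_j^\infty$ on the Gromov--Hausdorff limit, that submetries lift lines, that the Almost Splitting Theorem is then applied, and that the SMMS version additionally needs the weighted volume comparison, the Segment Inequality, and the fact that the gradient flow of an $f$-harmonic function preserves $e^{-f}\dvol_g$. Your sketch is a gradient-flow paraphrase of the same Kapovitch--Wilking strategy and correctly names the key tools, so it is consistent with the citation; but two steps in your plan are treated more optimistically than is warranted. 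First, you assert in the third paragraph that the orbit map $v\mapsto\Psi_v(y)$ of the limit $\R^k$-action is an isometric embedding, so that the orbits are flat totally geodesic copies of $\R^k$, and only then invoke the splitting theorem. That flatness of the whole $k$-dimensional orbit is not a direct consequence of the $L^1$-orthonormality of the gradients on a general Ricci-limit space; it is essentially the conclusion one is trying to prove, so your order of operations risks circularity. The actual route in \cite{KW} is an induction on $k$: the flow of one function $\mathbf{b}_k^i$ alone limits to a flow whose trajectory through $p_\infty$ is a line, the splitting theorem is applied once to write $Y=\R\times X$, one checks the remaining $\mathbf{b}_1^i,\dots,\mathbf{b}_{k-1}^i$ descend to the cross-section with the same integral smallness, and one repeats. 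This inductive scheme also avoids having to establish asymptotic commutativity of all $k$ flows; your plan leans on the $L^2$-smallness of $[\grad\mathbf{b}_j^i,\grad\mathbf{b}_l^i]$, which is correct pointwise, but promoting an $L^2$-averaged estimate to an honest group action on the limit is an extra step that the induction makes unnecessary. Second, the passage from $\dashint|\Hess\mathbf{b}_j^i|^2\to 0$ to genuine distance control under the flow requires knowing that the gradient flow is measure-preserving (so that the exceptional set coming from the Segment Inequality is not blown up as you flow); your plan omits this, whereas the paper explicitly flags it (preservation of $e^{-f}\dvol_g$ under the $f$-harmonic gradient flow) as the essential new input in the SMMS version.
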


  Without the assumption that a line exists in the limit space, Kapovitch and Wilking instead show that each of the functions $\mathbf{b}_j^i$, as in the hypothesis of Theorem \ref{ProductLemma}, limit to a submetry $\mathbf{b}_j^\infty$ as $i \to \infty$.  The submetry $\mathbf{b}_j^\infty$  then lifts lines to lines, which allows one to apply the Almost Splitting Theorem to show that the limit indeed splits.  Their argument may be modified to the smooth metric measure space setting by using the volume comparison \cite[Theorem 1.2]{WW}, the Segment Inequality \ref{SegmentInequality}, and the fact that gradient flow of an $f$-harmonic function is measure preserving with respect to the measure $e^{-f} \dvol_g$.  Augmenting their arguments in this manner yields the following extension.

\begin{thm} \label{Product lemma SMMS}
Let $(M_i, g_i, e^{-f}\dvol_{g_i})$ be a sequence of smooth metric measure spaces with $|f_i| \leq k$ and $\Ric_{f_i} > -\epsilon_i \to 0$.  Suppose that for every $i$ and $j = 1, \dots, m$, there are harmonic functions $b_j^i: B(p_i, r_i) \to \R$ which are $L$-Lipschitz and fulfill
$$\dashint_{B(p_i, R)} \left(\sum_{j,l=1}^m |\langle \nabla \mathbf{b}_j^i, \nabla \mathbf{b}_l^i \rangle - \delta_{jl}| + \sum_{j=1}^m |\Hess \mathbf{b}_j^i|^2\right) e^{-f_i} \dvol_{g_i} \to 0 \text{ for all } R > 0.$$
Then $(B(p_i, r_i), p_i)$ subconverges in the pointed Gromov-Hausdorff topology to a metric product $(\R^m \times X, p_\infty)$ for some metric space $X$.
\end{thm}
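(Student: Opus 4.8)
The plan is to follow the proof of Theorem~\ref{ProductLemma} in \cite{KW}, replacing each Riemannian ingredient by its weighted counterpart: the volume element comparison coming from \cite[Theorem 1.2]{WW}, the Segment Inequality (Proposition~\ref{SegmentInequality}), the gradient estimate~\ref{GradEst2}, and the observation that the gradient flow $\Phi^{i,t}_j$ of the $f_i$-harmonic function $\mathbf{b}^i_j$ preserves the measure $e^{-f_i}\dvol_{g_i}$. The last point is immediate: writing $\mathrm{div}_{f_i}$ for the divergence with respect to $e^{-f_i}\dvol_{g_i}$, one has $\mathrm{div}_{f_i}(\nabla\mathbf{b}^i_j)=\Delta_{f_i}\mathbf{b}^i_j=0$, so the flow of $\nabla\mathbf{b}^i_j$ is measure preserving; since the $\mathbf{b}^i_j$ are $L$-Lipschitz and $r_i\to\infty$, these flows are defined for a definite time on each fixed ball $B(p_i,R)$.

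First I would promote the hypothesized $L^2$-smallness of $|\Hess\mathbf{b}^i_j|$ to pointwise control along geodesics. Applying the Segment Inequality~\ref{SegmentInequality} to $h=\sum_j|\Hess\mathbf{b}^i_j|^2$ — whose comparison constant $c(n+4k,\epsilon_i,R)$ stays bounded as $\epsilon_i\to 0$ precisely because $|f_i|\le k$ — shows that outside a set of pairs $(x,y)\in B(p_i,R)\times B(p_i,R)$ of $(e^{-f_i}\dvol_{g_i})^2$-measure $o(1)$, the integral of $\sum_j|\Hess\mathbf{b}^i_j|$ along the minimal geodesic $\gamma$ from $x$ to $y$ is $o(1)$. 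Since $s\mapsto\langle\nabla\mathbf{b}^i_j,\gamma'\rangle$ has derivative $\Hess\mathbf{b}^i_j(\gamma',\gamma')$, the first-variation formula gives $\bigl|\tfrac{d}{dt}d(\Phi^{i,t}_jx,\Phi^{i,t}_jy)\bigr|\le\int_{\gamma_t}|\Hess\mathbf{b}^i_j|$; combined with the $L^1$-smallness of $|\langle\nabla\mathbf{b}^i_j,\nabla\mathbf{b}^i_l\rangle-\delta_{jl}|$ this shows that a time-$s$ composition $\Phi^{i,s}$ of the flows is an almost isometry, that the flows almost commute, and that $\mathbf{b}^i_l\circ\Phi^{i,s}_j\approx\mathbf{b}^i_l+s\,\delta_{jl}$, all with errors tending to $0$ off a measure-small exceptional set. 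Because the flows are measure preserving, the exceptional sets retain measure $o(1)$ uniformly along the flows, so these estimates propagate.

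Next I would pass to a pointed (measured) Gromov-Hausdorff limit $(B(p_i,r_i),p_i)\to(Y,p_\infty)$ along a subsequence — the renormalized measures $e^{-f_i}\dvol_{g_i}$ subconverge by the volume comparison — carrying along the functions $\mathbf{b}^i_j$ and their gradient flows. Measure convergence together with the almost-isometry estimates force the limiting flows $\Phi^\infty_j$ to be genuine commuting isometric $\R$-actions on $Y$ with $\mathbf{b}^\infty_l(\Phi^\infty_j(s)x)=\mathbf{b}^\infty_l(x)+s\,\delta_{jl}$, each $\mathbf{b}^\infty_j$ being $1$-Lipschitz and each orbit a unit-speed curve. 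Since $\mathbf{b}^\infty_1$ is $1$-Lipschitz and increases at unit rate along the orbit of $\Phi^\infty_1$, that orbit is a line in $Y$; by the Splitting Theorem~\ref{Splitting} (equivalently the Almost Splitting Theorem~\ref{Almost Splitting} applied along the sequence), $Y$ splits isometrically as $\R\times X_1$ with $\mathbf{b}^\infty_1$ the coordinate on the $\R$-factor. The relations $\langle\nabla\mathbf{b}^\infty_j,\nabla\mathbf{b}^\infty_1\rangle=0$ for $j\ge 2$ show $\mathbf{b}^\infty_2,\dots,\mathbf{b}^\infty_m$ are independent of that coordinate and descend to $X_1$, yielding an analogous configuration of $m-1$ functions with orthonormal gradients and line-orbits; iterating $m$ times (equivalently, using $\langle\nabla\mathbf{b}^\infty_j,\nabla\mathbf{b}^\infty_l\rangle=\delta_{jl}$ to see the $m$ orbit families through $p_\infty$ span an $m$-flat, then splitting repeatedly) gives $Y\cong\R^m\times X$.

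The hard part will be the first two steps: carrying out the passage from the integral Hessian bound to the almost-isometry property of the gradient flows \emph{uniformly in} $i$, and keeping rigorous control of the ``bad'' sets — which enter both through the Segment Inequality and through differentiating the distance function along the flow — in showing that they have $e^{-f_i}\dvol_{g_i}$-measure tending to $0$, are preserved in measure by the measure-preserving flows, and behave well under measured Gromov-Hausdorff convergence, so that the limiting flows are honest isometries. This is exactly where the weighted volume comparison is needed, to keep the constants in Proposition~\ref{SegmentInequality} bounded independently of $i$, and where the hypothesis $|f_i|\le k$ (rather than a bound on $|\nabla f_i|$) enters, through the gradient estimate~\ref{GradEst2} used to bound $|\nabla\mathbf{b}^i_j|$ and the interior cutoff functions. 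Once this is in place, the remaining bookkeeping is formal.
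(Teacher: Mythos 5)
Your proposal follows exactly the route the paper indicates: replicate Kapovitch--Wilking's proof of their Lemma 2.1, replacing the Riemannian tools by the weighted volume comparison, the weighted Segment Inequality (Proposition~\ref{SegmentInequality}), and the observation that the gradient flow of an $f$-harmonic function preserves $e^{-f}\dvol_g$ since $\Delta_f\mathbf{b}^i_j=0$. The two formulations you invoke --- the flow orbits of $\mathbf{b}^\infty_j$ being lines versus $\mathbf{b}^\infty_j$ being a submetry that lifts lines, feeding into the Splitting Theorem~\ref{Splitting} --- are the same mechanism in Kapovitch--Wilking's argument, so the proposal matches the paper's approach; the only minor imprecision is that the $L$-Lipschitz bound on $\mathbf{b}^i_j$ is already a hypothesis of this lemma, so the gradient estimate~\ref{GradEst2} is used upstream (in producing Theorem~\ref{9.29 SMMS}) rather than here.
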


The following lemma of Kapovitch and Wilking requires only an inner metric space structure and hence may be applied to smooth metric measure spaces.  

\begin{lem}\cite[Lemma 2.2]{KW} \label{2.2 SMMS}
Let $(Y_i, \tilde p_i)$ be an inner metric space endowed with an action of a closed subgroup $G_i$ of its isometry group, $i \in \mathbb{N} \cup \{\infty\}$.  Suppose $(Y_i, G_i, \tilde p_i) \to (Y_\infty, G_\infty, \tilde p_\infty)$ in the equivariant Gromov-Hausdorff topology.  Let $G_i(r)$ denote the subgroup generated by those elements that displace $\tilde p_i$ by at most $r$, $i \in \mathbf{N} \cup \{ \infty \}$.  Suppose there are $0 \leq a < b$ with $G_\infty(r) = G_\infty(\frac{a+b}{2})$ for all $r \in (a, b)$.  Then there is some sequence $\epsilon_i \to 0$ such that $G_i(r) = G_i(\frac{a+b}{2})$ for all $r \in (a+ \epsilon_i, b-\epsilon_i)$.  
\end{lem}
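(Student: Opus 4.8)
The plan is to reproduce the argument of Kapovitch and Wilking for \cite[Lemma 2.2]{KW} essentially verbatim: as the statement indicates, it uses only the inner metric structure of the spaces together with the formal properties of equivariant Gromov--Hausdorff convergence, namely that a sequence of isometries displacing the base points by a bounded amount subconverges to an isometry in the limit group, that every element of the limit group arises as such a limit, that displacements converge, and that composition and inversion are compatible with the convergence; no curvature or measure enters. Write $m=\tfrac{a+b}{2}$, and recall that $r\mapsto G_i(r)$ is a non-decreasing family of subgroups and that $G_i(r)$ is generated by the (inverse-closed) set $\{\gamma\in G_i:\,d(\gamma\tilde p_i,\tilde p_i)\le r\}$.

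First I would reduce the claim to a fixed threshold: it suffices to prove that for every $\epsilon\in(0,\tfrac{b-a}{2})$ there is $N(\epsilon)$ so that $G_i(r)=G_i(m)$ for all $i\ge N(\epsilon)$ and all $r\in(a+\epsilon,b-\epsilon)$, since a routine diagonal argument over $\epsilon=1/k$ then produces $\epsilon_i\to0$ with the asserted property. Fix such an $\epsilon$ and suppose the fixed-threshold claim fails; passing to a subsequence, there are $r_i\in(a+\epsilon,b-\epsilon)$ with $G_i(r_i)\ne G_i(m)$, and passing to a further subsequence I may assume either $r_i\le m$ for all $i$, or $r_i\ge m$ for all $i$. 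In the first case $G_i(r_i)\subsetneq G_i(m)$, so (otherwise $G_i(m)\subseteq G_i(r_i)$) some generator $\gamma_i$ of $G_i(m)$, necessarily with $d(\gamma_i\tilde p_i,\tilde p_i)\le m$, fails to lie in $G_i(r_i)\supseteq G_i(a+\epsilon)$. In the second case $G_i(m)\subsetneq G_i(r_i)$, so some generator $\gamma_i$ of $G_i(r_i)$, with $d(\gamma_i\tilde p_i,\tilde p_i)\le r_i<b-\epsilon$, fails to lie in $G_i(m)$. In either case the displacements $d(\gamma_i\tilde p_i,\tilde p_i)$ are bounded above by $b$, so after a further subsequence $\gamma_i\to\gamma_\infty\in G_\infty$ with $d(\gamma_\infty\tilde p_\infty,\tilde p_\infty)=\lim_i d(\gamma_i\tilde p_i,\tilde p_i)=:L<b$.

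Next I would bring in the hypothesis on $G_\infty$. Since $L<b$, the element $\gamma_\infty$ is one of the generators of $G_\infty(L')$ for $L':=\max(L,m)\in(a,b)$, so $\gamma_\infty\in G_\infty(L')=G_\infty(m)$ by hypothesis; and since $G_\infty(m)=G_\infty(\rho)$ for every $\rho\in(a,b)$, I may write $\gamma_\infty=h_1\cdots h_N$ as a finite product of elements of $G_\infty$ with $d(h_j\tilde p_\infty,\tilde p_\infty)\le\rho$, where I am free to take $\rho=a+\tfrac{\epsilon}{2}$ in the first case and $\rho=\tfrac{a+m}{2}<m$ in the second. Choosing $h_j^i\in G_i$ with $h_j^i\to h_j$ we get $d(h_j^i\tilde p_i,\tilde p_i)\to d(h_j\tilde p_\infty,\tilde p_\infty)\le\rho$, so for large $i$ the word $\tilde\gamma_i:=h_1^i\cdots h_N^i$ lies in $G_i(a+\epsilon)$ in the first case and in $G_i(m)$ in the second. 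Since $\tilde\gamma_i\to h_1\cdots h_N=\gamma_\infty$ and $\gamma_i\to\gamma_\infty$, compatibility of the group operations with the convergence gives $d\big(\gamma_i^{-1}\tilde\gamma_i\cdot\tilde p_i,\tilde p_i\big)\to0$, so $\gamma_i^{-1}\tilde\gamma_i$ also lies in $G_i(a+\epsilon)$, resp.\ $G_i(m)$, for large $i$; hence $\gamma_i=\tilde\gamma_i(\gamma_i^{-1}\tilde\gamma_i)^{-1}$ does too, contradicting the choice of $\gamma_i$. This establishes the fixed-threshold claim, and the diagonal argument completes the proof.

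The only genuinely delicate point I anticipate is the treatment of the ``error'' $\gamma_i^{-1}\tilde\gamma_i$: the lifted word $\tilde\gamma_i$ need not equal $\gamma_i$, since distinct elements of $G_i$ can converge to the same element of $G_\infty$ when $G_i\to G_\infty$ is not a convergence of discrete groups, and the argument closes only because this discrepancy displaces $\tilde p_i$ by an amount tending to $0$. The remaining care is purely bookkeeping around the open interval $(a,b)$: one must choose the decomposition threshold $\rho$ and its $\epsilon$-thickening so that both stay strictly inside $(a,b)$ in each of the two cases, which is exactly why the hypothesis is stated on the open interval rather than its closure.
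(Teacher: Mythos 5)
The paper does not prove this lemma at all: it simply cites \cite[Lemma 2.2]{KW} and observes that the statement concerns only inner metric spaces and equivariant Gromov--Hausdorff convergence, so it carries over to the smooth metric measure space setting without modification. Your proof is a correct and faithful reconstruction of the Kapovitch--Wilking argument to which the paper appeals---the reduction to a fixed threshold via diagonalization, the bifurcation into the cases $r_i\le m$ and $r_i\ge m$, the extraction of a limit generator $\gamma_\infty$, its rewriting as a word in $G_\infty(\rho)$ for a $\rho$ safely inside the open interval, the lifting of that word back to $G_i$, and the careful handling of the discrepancy $\gamma_i^{-1}\tilde\gamma_i$ (whose displacement tends to $0$ because $G_i\to G_\infty$ need not be a convergence of discrete groups)---so you have supplied, correctly, the content the paper leaves implicit by citation.
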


For more on equivariant Gromov-Hausdorff convergence, see \cite{FY}.  Lemma \ref{2.2 SMMS} and the Almost Splitting Theorem \ref{Splitting} allow us to modify arguments of the proof of \cite[Lemma 2.3]{KW}  to show that the following holds for smooth metric measure spaces.  

\begin{lem}\label{2.3 SMMS}
Suppose $(M_i^n, q_i)$ is a pointed sequence of smooth metric measure spaces where $(M_i^n, g_i, e^{-f_i}\dvol_{g_i})$  has $|f_i| \leq k$ and $\Ric_{f_i}(M_i)\geq -1/i$.  Moreover, assume $(M_i^n, q_i) \to (\R^m \times K, q_\infty)$ where $K$ is compact, and the action of $\pi_1(M_i)$ on the universal cover $(\widetilde M_i, \tilde q_i)$ converges to a limit action of a group $G$ on some limit space $(Y, \tilde q_\infty)$.  Then $G(r) = G(r')$ for all $r, r' > 2 \diam(K)$.
\end{lem}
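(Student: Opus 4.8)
The plan is to follow Kapovitch–Wilking's argument for \cite[Lemma 2.3]{KW}, replacing each Riemannian ingredient with its smooth metric measure space analog developed above. Suppose for contradiction that there exist $r, r' > 2\diam(K)$ with $G(r) \neq G(r')$; without loss of generality $r < r'$, and by relabeling we may assume $G(s)$ is not constant on $(r,r')$. Choose $a < b$ in this interval and an intermediate radius $\frac{a+b}{2}$ witnessing a jump, so that we are in the situation of Lemma \ref{2.2 SMMS}. The point of that lemma is to transfer the information about the limit group $G$ back to the groups $\pi_1(M_i)$: it yields $\epsilon_i \to 0$ such that $\Gamma_i(r) := \pi_1(M_i)(r)$ stabilizes, i.e. $\Gamma_i(r) = \Gamma_i(\frac{a+b}{2})$ for $r$ in a shrinking subinterval. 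The contradiction will come from examining the geometry of $\widetilde M_i / \Gamma_i(\tfrac{a+b}{2})$ together with the splitting structure of the limit.

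First I would set up the equivariant convergence carefully: since $(M_i^n, q_i) \to (\R^m \times K, q_\infty)$ with $K$ compact, one passes to the universal covers $(\widetilde M_i, \tilde q_i)$ and, after passing to a subsequence, obtains equivariant Gromov--Hausdorff convergence $(\widetilde M_i, \pi_1(M_i), \tilde q_i) \to (Y, G, \tilde q_\infty)$ with $Y/G = \R^m \times K$. The key structural input is that because $\widetilde M_i$ has $\Ric_{f_i} \geq -1/i$ and $\widetilde M_i/\pi_1(M_i) = M_i$ has bounded diameter data (through the $K$ factor), any subgroup of $\pi_1(M_i)$ generated by elements displacing $\tilde q_i$ by a definite amount produces a quotient of $\widetilde M_i$ whose diameter is controlled; moreover, the limit $Y$ contains $m$ independent lines coming from the $\R^m$ factor. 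Here is where the Almost Splitting Theorem \ref{Splitting} enters: applied to the sequence $\widetilde M_i$ (which has almost nonnegative Bakry--Emery Ricci curvature and $|f_i| \leq k$), any line in the limit forces an isometric split $Y = \R^m \times Y_0$, and one argues $Y_0$ is compact because otherwise $Y_0$ contains a further line, contradicting the diameter bound coming from $2\diam(K)$ after quotienting by the relevant subgroup.

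The heart of the argument is the radius estimate. Suppose $\gamma \in \Gamma_i$ displaces $\tilde q_i$ by some amount in the forbidden range. Lift the $\R^m$-lines in the limit to lines in $\widetilde M_i$ (approximately); because the $\mathbf b_j$-type $f$-harmonic coordinate functions of Theorem \ref{9.29 SMMS} are nearly distance functions along these directions and the gradient flow of an $f$-harmonic function is measure preserving with respect to $e^{-f}\dvol_g$ (the Segment Inequality \ref{SegmentInequality} and the volume comparison controlling the error terms), one shows that the action of such a $\gamma$ on $\widetilde M_i/\Gamma_i(\tfrac{a+b}{2})$ would create a geodesic of length exceeding $2\diam(K)$ in the compact factor, which is impossible once $i$ is large. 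In other words, $\Gamma_i(s)$ cannot jump for $s > 2\diam(K) + o(1)$, and pushing this to the limit via Lemma \ref{2.2 SMMS} (which is purely metric and applies verbatim) forces $G(r) = G(r')$, the desired contradiction.

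The main obstacle I anticipate is the measure-preserving gradient flow step and the attendant error control: in the Riemannian setting Kapovitch--Wilking exploit that harmonic gradient flow preserves Riemannian volume, and here one must verify that $f$-harmonic gradient flow preserves the weighted measure $e^{-f}\dvol_g$ (which follows from $\Delta_f \mathbf b = 0$ and the divergence theorem with respect to $e^{-f}\dvol_g$) and then run the segment-inequality estimate with the weighted Bishop--Gromov comparison \cite[Theorem 1.2]{WW}, checking that the extra factor of $e^{2k[\cosh(\cdots)+1]}$ appearing in Proposition \ref{AbsVolComp} does not spoil the limiting argument. Since all the relevant constants depend only on $n, k$ and the fixed radii, this should go through, but it is the place where the smooth metric measure space generalization requires genuine care rather than a routine substitution.
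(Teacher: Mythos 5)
Your high-level plan is right on the points the paper itself emphasizes: Lemma \ref{2.3 SMMS} is obtained by rerunning Kapovitch--Wilking's proof of \cite[Lemma 2.3]{KW}, with the two smooth metric measure space inputs being the equivariant-limit Lemma \ref{2.2 SMMS} (which is purely metric) and the Splitting Theorem \ref{Splitting} applied to the limit space $Y$; the splitting is triggered because the $m$ independent lines in $Y/G = \R^m \times K$ lift to lines in $Y$ through the submetry $Y \to Y/G$, and the Bakry--Emery hypotheses $\Ric_{f_i} \geq -1/i$, $|f_i| \leq k$ are exactly what Theorem \ref{Splitting} needs. You also correctly identify the shape of the contradiction (an element of $G$ with displacement in $(a,b)$, $a > 2\diam K$, that is not in $G(a)$).

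However, your third paragraph imports tools that do not belong in this lemma and would not carry the argument. The $f$-harmonic ``almost coordinate'' functions of Theorem \ref{9.29 SMMS}, the fact that $f$-harmonic gradient flow preserves $e^{-f}\dvol_g$, the Segment Inequality \ref{SegmentInequality}, and the weighted Bishop--Gromov comparison are precisely the ingredients the paper earmarks for the Product Lemma, Theorem \ref{Product lemma SMMS} (a separate result), not for Lemma \ref{2.3 SMMS}. Kapovitch--Wilking's Lemma 2.3 and its smooth metric measure space analogue are proved by a structural argument on the \emph{limit} isometry group: once $Y = \R^m \times Y_0$ with $Y_0$ compact and of diameter controlled by $\diam K$, any $g \in G$ decomposes into a bounded $Y_0$-part and an $\R^m$-translation, and the connected translation subgroup lets one chop the $\R^m$-part into pieces of arbitrarily small displacement, forcing $G(r) = G$ for every $r > 2\diam K$. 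No gradient flow, no segment inequality, and no quantitative Hessian estimates enter that argument; inserting them does not create a working estimate, because the claimed ``geodesic of length exceeding $2\diam(K)$ in the compact factor'' inside $\widetilde M_i/\Gamma_i(\tfrac{a+b}{2})$ is not a contradiction by itself --- that space is an intermediate cover of $M_i$, not $\R^m \times K$.

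One further slip: you invoke Lemma \ref{2.2 SMMS} twice, once correctly (from the limit group $G_\infty$ back to the groups $\Gamma_i$, producing $\epsilon_i \to 0$ with $\Gamma_i(r)$ constant on a shrinking interval) and once in the reverse direction (``pushing this to the limit via Lemma \ref{2.2 SMMS}''). The lemma only transfers information \emph{from} the limit \emph{to} the approximating sequence, not the other way. The actual closing of the argument happens on $Y$, not by a second pass through Lemma \ref{2.2 SMMS}.
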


\noindent
We will also need the following result on the dimension of the limit space. 

\begin{lem}\label{Limit Dimension}
Let $(M_i^n, g_i, e^{-f_i}\dvol_g)$ be a sequence of smooth metric measure spaces such that $|f_i| \leq k$, $\diam (M_i^n) \leq D$, and $\Ric_f \geq -(n-1)H$, $H > 0$.    If $M_i^n$ converges to the length space $Y^m$ in the Gromov-Hausdorff sense, then $m \leq n+4k$.  
\end{lem}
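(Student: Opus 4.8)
The plan is to bound the Hausdorff dimension of the Gromov--Hausdorff limit $Y^m$ by controlling the growth of the number of disjoint balls in the approximating spaces $M_i^n$, via the volume comparison for smooth metric measure spaces. The essential point is that although the measure $e^{-f_i}\dvol_{g_i}$ is not the Riemannian measure, the bound $|f_i| \leq k$ forces it to be comparable (with constants $e^{\pm k}$) to $\dvol_{g_i}$, and the relative volume comparison \cite[Theorem 1.2]{WW} provides a doubling-type estimate with effective dimension $n+4k$. First I would fix $Y^m$, pick a point $y \in Y$ and a small $r > 0$, and consider a maximal $r$-separated set in $B(y,2r) \subset Y$; its cardinality $N(r)$ is, up to the usual comparison between packing and covering numbers, what governs the Minkowski (hence Hausdorff) dimension of $Y$. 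Pulling this separated set back through an $\varepsilon_i$-Gromov--Hausdorff approximation for large $i$, I obtain roughly $N(r)$ points in $M_i^n$ that are $(r - 2\varepsilon_i)$-separated and contained in $B(p_i, 2r + \varepsilon_i)$, where $p_i \to y$.

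The next step is the volume estimate. The balls $B(x_\ell, r/4)$ around these separated points are pairwise disjoint and all contained in $B(p_i, 3r)$, so
\begin{equation*}
N(r)\cdot \min_\ell \vol_{f_i}\!\big(B(x_\ell, r/4)\big) \leq \vol_{f_i}\big(B(p_i, 3r)\big).
\end{equation*}
Applying the relative volume comparison \cite[Theorem 1.2]{WW} to each term, together with $|f_i|\leq k$ to pass between $\vol_{f_i}$ and ordinary volume where convenient, gives
\begin{equation*}
\frac{\vol_{f_i}\big(B(p_i, 3r)\big)}{\vol_{f_i}\big(B(x_\ell, r/4)\big)} \leq C \cdot \frac{V_H^{n+4k}(3r + d(p_i,x_\ell))}{V_H^{n+4k}(r/4)} \leq C(n,k,H,D)\,(1/r)^{\,n+4k}
\end{equation*}
for $r$ bounded above by the diameter $D$, where $V_H^{n+4k}$ is the model volume of dimension $n+4k$ and the last inequality uses that $V_H^{n+4k}(\rho)$ behaves like $\rho^{\,n+4k}$ for small $\rho$ and is controlled on $[0, CD]$. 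Hence $N(r) \leq C'(n,k,H,D)\,r^{-(n+4k)}$, and since the packing number of $B(y,2r)$ in $Y$ is, up to the GH error $\varepsilon_i \to 0$, at most $N(r)$, letting $i \to \infty$ yields $N_Y(r) \leq C' r^{-(n+4k)}$; therefore $\dim_{\mathcal H} Y \leq n+4k$, i.e. $m \leq n+4k$.

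The main obstacle is bookkeeping the passage between the three relevant measures — the Riemannian volume on $M_i$, the weighted volume $\vol_{f_i}$, and the limit measure (or just the metric) on $Y$ — while keeping all constants independent of $i$; the $\varepsilon_i$-approximation must be used carefully so that the separated set survives with essentially the same cardinality and the enclosing ball only grows by a controlled amount. A secondary point is justifying that $\dim_{\mathcal H} Y$ is actually controlled by the packing function $N_Y(r)$; this is standard for length spaces arising as GH limits under a lower Ricci (here Bakry--\'Emery) bound, where the limit measure is a doubling measure, so Hausdorff and Minkowski dimension agree and both are read off from $N_Y(r)$. Everything else is the routine covering-argument arithmetic, which I would not grind through here.
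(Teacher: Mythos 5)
Your overall strategy matches the paper's: use the Bakry--Emery relative volume comparison to bound packing numbers in the $M_i$, transfer this to $Y$ via the Gromov--Hausdorff approximations, and read off the Hausdorff dimension. However, there is a concrete error in the packing setup that makes the argument, as written, fail to give any information about dimension.

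You consider a maximal $r$-separated set inside $B(y,2r)$. Because the radius of the enclosing ball shrinks at the same rate $r$ as the separation scale, the cardinality $N(r)$ of such a set is bounded above by a pure doubling constant, independent of $r$: in your own volume estimate, $d(p_i,x_\ell)\leq 2r+\varepsilon_i$, so $3r+d(p_i,x_\ell)\leq 5r+\varepsilon_i$, and for small $r$ and large $i$ the ratio
$V_H^{n+4k}(5r)/V_H^{n+4k}(r/4)$ tends to $20^{\,n+4k}$, a constant. Thus $N(r)\leq C$ for all small $r$; the bound $N(r)\leq C'r^{-(n+4k)}$ is true but vacuous, and a constant bound on the packing number of $B(y,2r)$ carries no dimensional content whatsoever (it is equivalent to doubling, which any such limit satisfies regardless of $n,k$). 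Consequently the final step, ``$N_Y(r)\leq C'r^{-(n+4k)}$ therefore $\dim_{\mathcal H}Y\leq n+4k$,'' does not follow from what you proved.

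The fix is exactly where the diameter bound $\diam M_i\leq D$ must enter, and is what the paper actually does: take a maximal $\epsilon$-separated (equivalently, a minimal $\epsilon$-covering) set in a ball of \emph{fixed} radius --- in fact in all of $Y$, since $\diam Y\leq D$. Then the disjoint $(\epsilon/2)$-balls in $M_i$ live inside a ball of fixed radius $\approx D$, and the volume comparison gives a genuinely growing bound $\Cov_{M_i}(\epsilon)\leq V_H^{n+4k}(2D)/V_H^{n+4k}(\epsilon/2)\sim \epsilon^{-(n+4k)}$. Passing to $Y$ via $\Cov_Y(\epsilon)\leq\Cov_{M_i}(\epsilon-2\delta_i)$ and then estimating $H^d_\epsilon(Y)\leq \Cov_Y(\epsilon)\,(2\epsilon)^d\to 0$ for $d>n+4k$ gives the conclusion directly, with no need to invoke doubling of a limit measure or equality of Minkowski and Hausdorff dimensions. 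Everything else in your write-up (the comparison between $\vol_{f_i}$ and the model volume via $|f_i|\leq k$, the handling of GH errors, disjointness of the $(\epsilon/2)$-balls) is sound once the enclosing ball is taken of fixed size.
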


\begin{proof}
Begin by noting that for any $(M^n, g, e^{-f}\dvol_g)$ with $\Ric_f \geq -(n-1)H$, $H > 0$, and fixed $x \in M$ and $R>0$, the $f$-volume comparison \cite[Theorem 1.2b]{WW} gives a bound on the number of disjoint $\epsilon$-balls contained in $B(x, R)$:
Let $B(x_1, \epsilon), \dots, B(x_l, \epsilon) \subset B(x, R)$ be disjoint.  Let $B(x_i, \epsilon)$ denote the ball with the smallest $f$-volume.  Then
$$ l \leq \frac{\vol_f B(x, R)}{\vol_f B(x_i, \epsilon)} \leq \frac{\vol_f B(x_i, 2R)}{\vol_f B(x_i, \epsilon)} \leq \frac{\vol_{H}^{n+4k}B(2R)}{\vol_{H}^{n+4k}B(\epsilon)}=C(n+4k, H, R, \epsilon).$$

Thus $\Capa_{M_i}(\epsilon)$, the maximum number of disjoint $\epsilon/2$-balls which can be contained in $M_i^n$, is bounded above by $C=C(n+4k, H, D, \frac{\epsilon}{2})$ for each $i$.  Moreover $\Cov_{M_i}(\epsilon)$, the   minimum number of $\epsilon$-balls covering $M_i^n$ less than or equal to $\Capa_{M_i}(\epsilon)$, so $\Cov_{M_i} \leq C$.    

Since $M_i^n \to Y$ in the Gromov-Hausdorff sense, there exists a sequence $\delta_i > 0$ such that $d_{GH}(M_i, Y) <  \delta_i \to 0$ as $i \to \infty$.  Then $\Cov_{Y}(\epsilon) \leq \Cov_{M_i}(\epsilon- 2\delta_i) \leq C$.  As $i \to \infty$, we have $\Cov_Y(\epsilon) \leq C$.  

To see that the Hausdorff dimension is bounded above by $n+4k$, recall that the $d$-dimension Hausdorff measure of $Y$ is defined by 
$$ H^d(Y) = \lim_{\epsilon \to 0} H_\epsilon^d (Y),$$
where 
$$H_\epsilon^d(Y) = \inf \left\{\sum_{i=1}^\infty (\diam U_i)^d \bigg| \bigcup_{i=1}^\infty U_i \supset Y, \diam U_i \leq \epsilon \right\}.$$
Since $\Cov_Y(\epsilon) \leq C$, it follows that $H_\epsilon^d(Y) \leq \sum_{i=1}^C (2\epsilon)^d$.  
Notice
$$ C = \frac{\vol_{H}^{n+4k}B(D)}{\vol_{H}^{n+4k}B(\epsilon/2)} \sim (\epsilon/2)^{-(n+4k)}$$
as $\epsilon \to 0$.  Thus as $\epsilon \to 0$
$$
 \sum_{i=1}^C (2\epsilon)^d
 = C\frac{\epsilon}{2}^d
\to 0
$$
for all $d > n+4k$.  
Thus the Hausdorff dimension of $Y$, defined by $\dim_H(Y) = \inf\{d \geq 0 | H^d(Y) = 0\}$ is at most $n+4k$.  
\end{proof}

The final tool we will use to extend \cite[Theorem 2.5]{KW} to smooth metric measure spaces is a type of Hardy-Littlewood maximal inequality for smooth metric measure spaces. 

\begin{prop}[Weak 1-1 Inequality] \label{Weak 1-1 ineq}
Suppose $(M^n, g, e^{-f} \dvol_g)$ with $|f| < k$ has $\Ric_f \geq -(n-1)H$ and $h: M \to \R$ is a nonnegative function.  Define $\Mx_\rho h(p) = \sup_{r \leq \rho} \dashint_{B(p, r)} h e^{-f}\dvol_g$ for $\rho \in (0, 1]$.  Then
 if $h \in L^1(M)$, we have $$\vol_f\{x | \Mx_\rho h(x) > c\} \leq \frac{C(n+4k, H)}{c} \int_M h e^{-f} \dvol_g$$ for any $c > 0$.  
\end{prop}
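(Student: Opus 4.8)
The plan is to run the standard Vitali-covering proof of the Hardy–Littlewood maximal inequality, but everywhere replacing Lebesgue measure by the weighted measure $e^{-f}\dvol_g$ and replacing the usual volume-doubling property by the $f$-volume doubling that follows from the relative volume comparison \cite[Theorem 1.2b]{WW}. First I would record the doubling statement in the form I need: for $\Ric_f \geq -(n-1)H$, $|f|\leq k$, and radii $r \leq 2$ (which is all that occurs since $\rho \leq 1$), one has $\vol_f(B(x,2r)) \leq D(n+4k,H)\,\vol_f(B(x,r))$; this is immediate from the chain $\frac{\vol_f(B(x,2r))}{\vol_f(B(x,r))} \leq \frac{V_H^{n+4k}(2r)}{V_H^{n+4k}(r)} \leq D(n+4k,H)$ used already in the proof of Lemma \ref{Limit Dimension}.

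Next, fix $c>0$ and set $U_c = \{x : \Mx_\rho h(x) > c\}$. For each $x \in U_c$ choose a radius $r_x \leq \rho$ with $\dashint_{B(x,r_x)} h\, e^{-f}\dvol_g > c$, i.e.
\begin{equation*}
\vol_f(B(x,r_x)) < \frac{1}{c}\int_{B(x,r_x)} h\, e^{-f}\dvol_g.
\end{equation*}
On any compact exhaustion $U_c \cap \overline{B(o,N)}$ the radii $r_x$ are uniformly bounded, so the Vitali $5r$-covering lemma (in the form valid for any metric space) produces a countable pairwise-disjoint subcollection $\{B(x_j, r_{x_j})\}$ with $U_c \cap \overline{B(o,N)} \subseteq \bigcup_j B(x_j, 5 r_{x_j})$. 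Then, using doubling three times to pass from $5r_{x_j}$ down to $r_{x_j}$ (so the total constant is $C(n+4k,H) = D(n+4k,H)^3$), and then disjointness,
\begin{align*}
\vol_f\bigl(U_c \cap \overline{B(o,N)}\bigr)
&\leq \sum_j \vol_f(B(x_j, 5 r_{x_j}))
\leq C(n+4k,H) \sum_j \vol_f(B(x_j, r_{x_j})) \\
&\leq \frac{C(n+4k,H)}{c} \sum_j \int_{B(x_j, r_{x_j})} h\, e^{-f}\dvol_g
\leq \frac{C(n+4k,H)}{c} \int_M h\, e^{-f}\dvol_g.
\end{align*}
Letting $N \to \infty$ gives the claim.

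The one genuine subtlety — and the step I would be most careful about — is the passage from $5r_{x_j}$ to $r_{x_j}$ in the weighted setting: the $5r$-covering lemma only controls $\vol_f(B(x_j,5r_{x_j}))$, and to bound this by a uniform multiple of $\vol_f(B(x_j,r_{x_j}))$ I need doubling to hold with a constant independent of the center and independent of the radius in the relevant range $r_{x_j} \leq \rho \leq 1$ (so $5r_{x_j} \leq 5$). This is exactly where $|f|\leq k$ is essential: the $f$-relative volume comparison of Wei–Wylie gives the center-free, radius-uniform bound $\vol_f(B(x,sr))/\vol_f(B(x,r)) \leq V_H^{n+4k}(sr)/V_H^{n+4k}(r)$, and the right-hand side for $s = 5$ and $r \leq 1$ is bounded by a constant $C(n+4k,H)$. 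Everything else — the Vitali lemma, the measurability of $U_c$, and the compact-exhaustion argument to reduce to countable disjoint subcollections — is standard and does not interact with the weight beyond the bookkeeping above.
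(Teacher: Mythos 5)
Your proposal is correct and follows essentially the same route as the paper: Vitali $5r$-covering lemma, $f$-volume doubling from the Wei–Wylie relative volume comparison under $|f|\leq k$, and disjointness plus the defining inequality of the maximal function. The paper states the $5r$-doubling directly rather than iterating the $2r$ case three times, and omits the compact-exhaustion bookkeeping, but these are cosmetic differences.
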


As in the proof of the Hardy-Littlewood maximal inequality for Euclidean spaces, one utilizes the Vitali Covering Lemma which states that for an arbitrary collection of balls $\{B(x_j, r_j): j \in J\}$ in a metric space, there exists a subcollection of balls $\{B(x_j, r_j): j \in J'\}$ with $J' \subseteq J$ from the original collection which are disjoint and satisfy
$$ \bigcup_{j \in J}B(x_j, r_j) \subseteq \bigcup_{j \in J'} B(x_j, 5r_j).$$
We also note that the $f$-Volume Comparison \cite[Theorem 1.2]{WW} gives a type of doubling estimate.  In particular, for all $r \leq 1$, we have
$$\vol_f(B(x, 5r)) \leq {C(n+4k, H)}{\vol_f(B(x, r))}.$$

\begin{proof} Let $J = \{x | \Mx_\rho h(x) > c\} $.  For all $x \in J$ there exists a ball $B(x, r_x)$ centered at $x$ with radius $r_x \leq 1$  such that 
\begin{equation}\label{ball up bound}\
\int_{B(x, r_x)} he^{-f} \dvol_g \geq c \vol_f B(x,r_x).
\end{equation} 
Then by the Vitali Covering Lemma, we have
$$J \subseteq \ \bigcup_{x \in J}B(x,r_x) \subseteq \bigcup_{x \in J'} B(x, 5r_x)$$
where $J' \subseteq J$.  Then
\begin{equation}\label{max}
\vol_f\{x | \Mx_\rho h(x) > c\} \leq \vol_f \left( \bigcup_{x \in J'} B(x, 5r_x)\right) 
 \leq C(n+4k, H) \sum_{x \in J'} \vol_f B(x, r_x).
\end{equation}
Combining \eqref{ball up bound} and \eqref{max} yields the desired result.
\end{proof}

Before continuing to the proof of the theorem, we take a moment to recall Gromov's short generator system.  
To construct a Gromov short generator of the fundamental group $\pi_1(p, M)$, we represent each element of $\pi_1(p, M)$ by a shortest geodesic loop $\gamma$ in that homotopy class.  
A minimal $\gamma_1$ is chosen so that it represents a nontrivial homotopy class of $\pi_1(M)$.  If $\langle \gamma_1 \rangle = \pi_1(M)$, then $\{\gamma_1\}$ is a Gromov short generator system of $\pi_1(M)$.   If not, consider $\pi_1(M) \setminus \langle \gamma_1 \rangle$.  Choose $\gamma_2 \in \pi_1(M) \setminus \langle \gamma_1 \rangle$ to be of minimal length.  If $\langle \gamma_1, \gamma_2 \rangle = \pi_1(M)$, then $\{\gamma_1, \gamma_2\}$ is a Gromov short generator system of $\pi_1(M)$.  If not, choose $\gamma_3 \in \pi_1(M) \setminus \langle \gamma_1, \gamma_2 \rangle$ such that $\gamma_3$ is of minimal length.  Continue in this manner until $\pi_1(M)$ is generated.  By this construction, we obtain a sequence of generators $\{\gamma_1, \gamma_2, \dots\}$  such that $|\gamma_i| \leq |\gamma_{i+1}|$ for all $i$.  The short generators have the property $|\gamma_i| \leq |\gamma^{-1}_j \gamma_i|$ for $i > j$.  Although this sequence of generators is not unique, the sequence of lengths of generators $\{|\gamma_1|, |\gamma_2|, \dots\}$ is unique.

We now have the necessary tools which will allow us to modify the argument of Kapovitch and Wilking to obtain a bound for the number of generators of $\pi_1(M)$ in the smooth metric measure space setting.  As in \cite{KW}, we prove a more general statement from which Theorem \ref{UnifBound} is a consequence.  This general statement, as well as its proof, is parallel to the statement and proof of \cite[Theorem 2.5]{KW}.  The argument is included in its entirety below for completeness.  

\begin{thm} \label{2.5 SMMS}
Given $n$, $k$, and $R$, there is a constant $C$ such that the following holds.  Suppose $(M^n, g, e^{-f}\dvol_g)$ is a smooth metric measure space with $|f|\leq k$, $p \in M$ and $\Ric_f \geq -(n-1)$ on $B(p, 2R)$.  Suppose also that $\pi_1(M, p)$ is generated by loops of length $\leq R$.  Then $\pi_1(M, p)$ can be generated by $C$ loops of length $\leq R$.  

\end{thm}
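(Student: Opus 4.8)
The plan is to adapt the proof of \cite[Theorem 2.5]{KW} to the smooth metric measure space setting, using throughout the analogues of the Riemannian ingredients assembled above: the $f$-volume comparison of \cite{WW} together with the packing and precompactness it yields, the Product Lemma \ref{Product lemma SMMS}, Lemmas \ref{2.2 SMMS} and \ref{2.3 SMMS}, the connecting functions of Theorem \ref{9.29 SMMS}, the gradient and Hessian estimates \ref{GradEst2} and \ref{HessEst}, the Segment Inequality \ref{SegmentInequality}, the weak $1$--$1$ maximal inequality \ref{Weak 1-1 ineq}, and the dimension bound \ref{Limit Dimension}. First I would fix a Gromov short generator system $\gamma_1,\gamma_2,\dots$ of $\pi_1(M,p)$ by loops of length $\le R$, with $|\gamma_1|\le|\gamma_2|\le\cdots\le R$, and pass to the connected cover $\widehat M$ of $B(p,2R)$ determined by the subgroup generated by short loops, with a fixed lift $\tilde p$ of $p$; the potential pulls back with the same bound $|f|\le k$ and $\Ric_f\ge -(n-1)$ still holds on $B(\tilde p,2R)\subset\widehat M$. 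The defining property $|\gamma_j|\le|\gamma_l^{-1}\gamma_j|$ for $j>l$ is equivalent to $d(\gamma_l\tilde p,\gamma_j\tilde p)\ge|\gamma_j|$, so the orbit points $\gamma_j\tilde p$ are ``decreasingly separated'' as $j$ increases.

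The next step is to bound, for each dyadic scale $\rho\le R$, the number of short generators with $|\gamma_j|\in(\rho,2\rho]$. For such generators the points $\gamma_j\tilde p$ are pairwise $\rho$-separated and contained in $B(\tilde p,2\rho)$, so after rescaling the metric by $\rho^{-1}$ (which preserves $|f|\le k$ and only improves the Ricci lower bound on the ball, since $\rho\le R$) the $f$-volume comparison of \cite{WW} gives a packing estimate: the number of such generators is at most a constant $C_0=C_0(n,k,R)$. Consequently the short generator system, hence $\pi_1(M,p)$, is generated by loops of length $\le R$ numbering at most $C_0$ times the number of dyadic scales at which short generators occur, and it remains to bound that number.

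To bound the number of occupied scales I would argue by contradiction together with an induction on the integer $m$, $0\le m\le n+4k$, the bound from Lemma \ref{Limit Dimension} on the Hausdorff dimension of the limit spaces that appear. Suppose the bound fails: there are $(M_i^n,g_i,e^{-f_i}\dvol_{g_i})$ with $|f_i|\le k$, $\Ric_{f_i}\ge -(n-1)$ on $B(p_i,2R)$, $\pi_1(M_i,p_i)$ generated by short loops, but with short generators occupying at least $i$ distinct dyadic scales. Rescaling $M_i$ at the smallest occupied scale so that the Ricci lower bound tends to $0$, and passing (by $f$-volume precompactness) to a pointed equivariant Gromov--Hausdorff limit of the covers with their deck actions, $(\widehat M_i,\Gamma_i,\tilde p_i)\to(Y,G,\tilde p_\infty)$ with $\dim_H Y\le m$, the bottom-scale short generators converge to nontrivial limit isometries of $Y$ inheriting the separation property. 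Adapting \cite{KW}, one shows their $f$-harmonic replacements (and iterates) satisfy the hypotheses of the Product Lemma \ref{Product lemma SMMS} --- the requisite Hessian, gradient, segment, connecting, and maximal-function estimates being \ref{HessEst}, \ref{GradEst2}, \ref{SegmentInequality}, \ref{9.29 SMMS}, and \ref{Weak 1-1 ineq} --- so that $Y$ splits isometrically as $\R^\ell\times X$ with $\ell\ge 1$. Using that $Y/G$ has bounded diameter, Lemmas \ref{2.2 SMMS} and \ref{2.3 SMMS} show the remaining generators are accounted for by the complementary factor $X$, whose Hausdorff dimension is strictly smaller; the inductive hypothesis bounds the number of scales they occupy, and the stability statement of Lemma \ref{2.2 SMMS} transports this bound back to $\Gamma_i=\pi_1(M_i,p_i)$ for large $i$, contradicting the choice of the sequence. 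Tracing constants yields the bound $C=C(n,k,R)$ of Theorem \ref{2.5 SMMS}.

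The main obstacle is precisely the multi-scale bookkeeping in the last step: the short generators genuinely spread over a range of scales, so one must run the rescaling-and-splitting argument at the smallest occupied scale, peel off the $\R^\ell$ factor, renormalize, and iterate, all while diagonalizing over the subsequences needed for equivariant convergence and tracking the dropping dimension --- this is where faithfully adapting \cite{KW} requires real care. A secondary, purely technical difficulty is that $\Ric_{f_i}\ge -(n-1)$ and the associated comparison tools are only available on the fixed balls $B(p_i,2R)$, so the covers, the group actions, and every invocation of the Product Lemma must be localized and, after rescaling, reinterpreted on balls of growing radius --- the form in which Lemmas \ref{Product lemma SMMS}, \ref{2.2 SMMS}, and \ref{2.3 SMMS} were stated --- and one must verify that completeness of the relevant balls is preserved under all of these operations.
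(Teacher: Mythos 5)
Your proposal assembles the right toolbox, but the inductive engine does not match the paper's and, as written, does not close. The paper follows Kapovitch--Wilking and argues by \emph{reverse} induction on $\dim(X)$, the dimension of the Gromov--Hausdorff limit of a hypothetical contradicting sequence: each inductive step produces a new contradicting sequence whose limit has strictly \emph{larger} dimension, terminating against the absolute bound $\dim \leq n+4k$ of Lemma~\ref{Limit Dimension}. Concretely, Step~1 zooms in at a regular point of $X$ to manufacture a contradicting sequence limiting to $\R^m$; Step~2 then uses Theorem~\ref{9.29 SMMS} to produce $f$-harmonic splitting functions coming from the GH-closeness to $\R^m$, uses Proposition~\ref{Weak 1-1 ineq} to choose a point $z_i$ at which the Hessian/product integrals are controlled on \emph{all} radii $\leq 1/4$, uses Lemmas~\ref{2.2 SMMS} and \ref{2.3 SMMS} to obtain a gap $[\delta_i,4]$ in the lengths of short generators, rescales by $1/r_i$ where $r_i<\delta_i$ is the \emph{maximal} radius at which a short generator survives near $z_i$, and applies the Product Lemma~\ref{Product lemma SMMS} to obtain a new limit $\R^m\times Z$ with $Z$ nontrivial and compact --- hence dimension $\geq m+1$.

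Your outline inverts the dimension monotonicity and misattributes the splitting. You rescale at the \emph{smallest} occupied scale, split off $\R^\ell$, and appeal to an ``inductive hypothesis'' for the complementary factor $X$ ``whose Hausdorff dimension is strictly smaller.'' But $X$ is merely a metric factor of the limit $Y$; it is not the Gromov--Hausdorff limit of a contradicting sequence of smooth metric measure spaces, so there is no earlier case of the induction that applies to it and no mechanism for ``transporting the bound back'' to $\pi_1(M_i,p_i)$. Likewise, the splitting is not produced from ``$f$-harmonic replacements of the bottom-scale short generators'' (no such construction exists in the paper or in \cite{KW}); it is produced from the functions of Theorem~\ref{9.29 SMMS}, furnished by the GH-approximation to $\R^m$ obtained in Step~1. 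Your preliminary dyadic-scale reduction (uniform packing bound per scale, then count scales) is a correct but extraneous reformulation --- the paper simply bounds generators of length $\geq r$ by $C(n,k,r,R)$ and derives the contradiction directly --- and it does not simplify the inductive step, which is exactly where your argument diverges from, and fails to reproduce, the mechanism of the proof.
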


\begin{proof}[Proof of Theorem \ref{2.5 SMMS}]  

In order to prove Theorem \ref{2.5 SMMS} we begin, as in Kapovitch and Wilking's argument, by showing that there is a point $q \in B(p, \frac{R}{4})$ such that any Gromov short generator system of $\pi_1(M,q)$ has at most $C$ elements.

For $q \in B(p, \frac{R}{4})$ consider a Gromov short generator system $\{\gamma_1, \gamma_2, \dots \}$  of $\pi_1(M, q)$.  By assumption, $\pi_1(M, p)$ is generated by loops of length $\leq R$.  In choosing generators for any Gromov short generator system of $\pi_1(M, q)$, loops of the form $\sigma \circ g \circ \sigma^{-1}$, where $\sigma$ is a minimal geodesic from $q$ to $p$ and $g$ is a generator of length $\leq R$ of $\pi_1(M, p)$, are contained in each of the homotopy classes of $\pi_1(M, q)$.  Such a loop has length $\leq \frac{3R}{2}$ and hence the minimal length representative of that class, $\gamma_i$ must have the property that $|\gamma_i| \leq \frac{3R}{2}$.   
Moreover, there are a priori bounds on the number of short generators of length $\geq r$.  To see this, let us only consider the short generators such that $|\gamma_i| \geq r$.  In the universal cover $\widetilde M$ of $M$, if $\tilde q \in \pi_1^{-1}(q)$, we have 
$$  r \leq d(\gamma_i \tilde q, \tilde q) \leq  d (\gamma_j^{-1} \gamma_i \tilde q, \tilde q) = d(\gamma_i \tilde q, \gamma_j \tilde q)$$
 for $i > j$.
 Thus the balls $B(\gamma_i \tilde q, r/2)$ are pairwise disjoint for all $\gamma_i$ such that $|\gamma_i| \geq r$.  Then,
$$\bigcup_{\{\gamma_i: |\gamma_i| \geq r\}} B(\gamma_i \tilde q , \frac{r}{2}) \subset B(\tilde q, 2R + \frac{r}{2})$$
implies that 
$$\#\{\gamma_i: |\gamma_i| \geq r\} \vol_f B(q, \frac{r}{2}) \leq \vol_f B(q, 2R + \frac{r}{2}).$$
And hence by the volume comparison \cite[Theorem 1.2(a)]{WW}, it follows that $\#\{\gamma_i: |\gamma_i| \geq r\} \leq C(n, k, r, R)$.  Since one can control the number of short generators of length between $r$ and $\frac{3R}{2}$ for $r < R$, one needs only show that the number of short generators of $\pi_1(M, q)$ with length $< r$ can also be controlled.  This argument proceeds by contradiction.  We assume the existence of a contradicting pointed sequence of smooth metric measure spaces $(M_i, p_i)$ such that $(M_i, g_i, e^{-f_i}\dvol_g)$ has the property that 
\begin{itemize}
\item $|f_i| \leq k$
\item $\Ric_{f_i} \geq -(n-1)$ on $B(p_i, 3)$
\item for all $q_i \in B(p_i, 1)$ the number of short generators of $\pi_1(M_i, q_i)$ of length $\leq 4$ is larger than $2^i$.  
\end{itemize}
By the Gromov compactness theorem, we may assume that $(B(p_i, 3), p_i)$ converges to a limit space $(X, p_\infty)$.  Set
$$\dim(X) = \max\{k : \text{ there is a regular } x \in B(p_\infty, \frac{1}{4}) \text{ with } C_x X \simeq \R^k\}$$
where $C_xX$ denotes a tangent cone of $X$ at $x$.  

We prove that there is no such contradicting sequence by reverse induction on $\dim(X)$.  For the base case, let $m > n+4k+1$.  By Lemma \ref{Limit Dimension}, $\dim(X) \leq n+4k+1$, so there is nothing to prove here.  Suppose then that there is no contradicting sequence with $\dim(X) = j$ where $j \in \{m+1, \dots, n+4k\}$ but that there exists a contradicting sequence with $\dim(X) = m$.  The induction step is divided into two substeps.

\vspace{3 mm}
\noindent {\bf Step 1}  For any contradicting sequence $(M_i, p_i)$ converging to $(X, p_\infty)$ there is a new contradicting sequence converging to $(\R^{\dim X}, 0)$.
\vspace{3 mm}

Suppose $(M_i, p_i)$ is a contradicting sequence converging to $(X, p_\infty).$  
 By definition of $\dim(X)$, there exists $q_\infty \in B(p_\infty, \frac{1}{4})$ such that $C_{q_\infty}X \simeq \R^m$.    Let $q_i \in B(p_i, \frac{1}{2})$ such that $q_i \to q_\infty$ as $i \to \infty$.  Since this is a contradicting sequence, it follows that the Gromov short generator systems of $\pi_1(M_i, x_i)$ for all $x_i \in B(q_i, \frac{1}{4})$ contain at least $2^i$ generators of length $\leq 4$.  As noted earlier, for each fixed $\epsilon < 4$, the number of short generators of $\pi_1(M_i, x)$ of length $\in [\epsilon, 4]$ is bounded by a constant $C(n,k,\epsilon, 4)$.  Then we can find a rescaling $\lambda_i \to \infty$ such that for every $x_i \in B(q_i, \frac{1}{\lambda_i})$, the number of generators of $\pi_1(M_i, x)$ of length $\leq 4/\lambda_i$ is at least $2^i$.  Moreover, 
 $(\lambda_i M_i , q_i) \to (\R^m, 0)$, where $\lambda_i M_i$ denotes the smooth metric measure space $(M_i, \lambda_i g_i, e^{-f_i} \dvol_{\lambda_i g_i})$.  Thus the sequence $(\lambda_i M_i, q_i)$ is the new contradicting sequence desired.

\vspace{3 mm}
\noindent {\bf Step 2}  If there is a contradicting sequence converging to $(\R^m, 0)$, then we can find a contradicting sequence converging to a space whose dimension is larger than $m$.  
\vspace{3 mm}

Let $(M_i, q_i)$ denote the contradicting sequence converging to $(\R^m, 0)$ as obtained in Step 1 above.  Without loss of generality, assume that for some $r_i \to \infty$ and ${\epsilon_i} \to 0$, $\Ric_f \geq -{\epsilon_i}$ on $B(p_i, r_i)$.  
By Theorem \ref{9.29 SMMS} there exist $f$-harmonic functions $(\mathbf{b}_1^i, \cdots, \mathbf{b}_m^i): B(q_i, 1) \to \R^m$ such that 
$$ \dashint_{B(q_i, 1)} \left(\sum_{j,l = 1}^m |\langle \nabla \mathbf{b}_l^i, \nabla \mathbf{b}_j^i \rangle - \delta_{lj}| + || \Hess (\mathbf{b}_l^i)||^2\right) e^{-f}\dvol_g < \delta_i \to 0.$$

\noindent{ \bf Claim}  There exists $z_i \in B(q_i, \frac{1}{2})$, $c>0$ such that for any $r \leq \frac{1}{4}$,
$$ \dashint_{B(z_i,r)} \left(\sum_{j,l = 1}^m |\langle \nabla \mathbf{b}_l^i, \nabla \mathbf{b}_j^i \rangle - \delta_{lj}| + || \Hess (\mathbf{b}_l^i)||^2\right)e^{-f}\dvol_g \leq c\delta_i \to 0.$$

\vspace{3 mm}

\noindent Let $h(x)$ denote $ \sum_{j,l = 1}^m |\langle \nabla \mathbf{b}_l^i, \nabla \mathbf{b}_j^i \rangle - \delta_{lj}| + || \Hess (\mathbf{b}_l^i)||^2$ evaluated at $x$.  Seeking contradiction, suppose that for all $c > 0$, $r \leq 1/2$, and $z \in B(q_i, \frac{1}{2})$ 
$$ \dashint_{B(z, r)} he^{-f}\dvol_g > c\delta_i,$$
then it follows that $\Mx_{1/2} h(z) = \sup_{r \leq 1/2} \dashint_{B(z,r)} he^{-f} \dvol_g \geq c\delta_i$.  Hence
\begin{equation} \label{contrary}
 \vol_f \{ x | \Mx_{1/2} h(x) \geq c\delta_i\} \geq \vol_f(B(q_i, \frac{1}{2})).
 \end{equation}
By Proposition \ref{Weak 1-1 ineq}, 
 we also have that for all $c \geq 0$, 
\begin{equation} \label{weak}
\vol_f\{x | \Mx_{1/2} h(x) \geq c\delta_i\} \leq \frac{C(n+4k, -1)}{c}.
\end{equation}
Combining  \eqref{contrary} and \eqref{weak}, we have
$$ 1 \leq  \frac{\vol_f \{ x | \Mx_{1/2} h(x) \geq c\delta_i\}}{\vol_f(B(q_i, \frac{1}{2}))} \leq \frac{C(n+4k, -1)}{c \cdot \vol_f(B(q_i, \frac{1}{2}))}.$$
Choosing $c > C(n+4k, -1)/ \vol_f(B(q_i, \frac{1}{2}))$ yields a contradiction and hence the claim is proven.

By Lemmas \ref{2.2 SMMS} and \ref{2.3 SMMS}, there exists a sequence $\delta_i \to 0$ such that for all $z_i \in B(p_i, 2)$ the Gromov short generator system of $\pi_1(M_i, z_i)$ does not contain any elements of length in $[\delta_i, 4]$.  Choose $r_i \leq 1$ maximal with the property that there is $y_i \in B(z_i, r_i)$ such that the short generators of $\pi_1(M_i, y_i)$ contains a generator of length $r_i$.    Then $r_i < \delta_i \to 0$.  

Rescaling by $\frac{1}{r_i}$ gives that $\pi_1(\frac{1}{r_i} M_i, y_i)$ has at least $2^i$ short generators of length $\leq 1$ for all $y_i \in B(z_i, 1)$.  By the choice of rescaling, there is at least one $y_i \in B(z_i, r_i)$ such that the Gromov short generator system at that $y_i$ contains a generator of length 1. Moreover, the above claim together with the Product Lemma \ref{Product lemma SMMS} give $(\frac{1}{r_i} M_i, z_i) \to (\R^k \times Z, z_\infty)$.  Moreover, by Lemmas \ref{2.2 SMMS} and \ref{2.3 SMMS}, $Z$ is nontrivial and thus $\dim(\R^m \times Z) \geq m+1$, a contradiction.

Thus there exists $q\in B(p, \frac{R}{4})$ such that number of generators of $\pi_1(M, q)$ has at most $C$ elements.   Thus the subgroup of $\pi_1(M,p)$ generated by loops of length $< 3R/5$ can be generated by $C$ elements.  Moreover, the number of short generators of $\pi_1(M,p)$ with length in $[3R/5, R]$ is bounded by some a priori constant.  
\end{proof} 

\vspace{2 mm}

\bibliographystyle{amsalpha}
\bibliography{Jaramillo_SMMS_Fundamental_Group}

\vspace{2 mm}
\noindent
\textsc{Department of Mathematics,
 University of California,  Santa Barbara, CA 93106}
 \\ {\it E-mail address:}  
 \verb;maree@math.ucsb.edu;

\end{document}